\newtheorem{thm}{Theorem}[section]
\newtheorem{lemma}[thm]{Lemma}
\newtheorem{rem}[thm]{Remark}
\newtheorem{myDef}{Definition}[section]
\newcommand{\nn}{\nonumber}
\def\epsilon{\varepsilon} 
\begin{document}
\begin{frontmatter}
\title{
%
Structure-preserving finite element methods 
for computing dynamics of rotating Bose–Einstein condensate
}

\author[1]{Meng Li}
\address[1]{School of Mathematics and Statistics, Zhengzhou University, Henan Academy of Big Data, 
Zhengzhou 450001, China}
\ead{limeng@zzu.edu.cn}
\author[2]{Junjun Wang}
\address[2]{School of Mathematics and Statistics, Pingdingshan University, Pingdingshan 467000, China}
\author[2]{Zhen Guan}
\author[3]{Zhijie Du}
\address[3]{School of Science, Wuhan University of Technology, Wuhan 430070, China}

\begin{abstract}
This work is concerned with the construction and analysis of structure-preserving Galerkin methods for computing the dynamics of rotating
Bose–Einstein condensate (BEC) based on
the Gross-Pitaevskii equation with angular momentum rotation. Due to the presence of the rotation term, constructing finite element methods (FEMs) that preserve both mass and energy remains an unresolved issue, particularly in the context of nonconforming FEMs. 
Furthermore, in comparison to existing works, we provide a comprehensive convergence analysis, offering a thorough demonstration of the methods' optimal and high-order convergence properties.
Finally, extensive numerical
results are presented to check the theoretical analysis of the structure-preserving numerical method for rotating BEC, and the quantized vortex lattice's behavior is scrutinized through a series of numerical tests.
\end{abstract} 

\begin{keyword}
Rotating
Bose–Einstein condensate; Gross-Pitaevskii equation; Angular momentum rotation; Structure-preserving; Finite element methods
\end{keyword}

\end{frontmatter}

\setcounter{equation}{0}
\section{Introduction} \label{sec:intro}
The phenomenon of Bose–Einstein condensate (BEC) occurs when a sparse gas of a particular type of bosons is confined by a potential and subsequently cooled to extremely low temperatures approaching the absolute minimum of $0$ Kelvin. Since the pioneering experimental achievement of generating a quantized vortex in a gaseous BEC \cite{matthews1999vortices,madison2000vortex,abo2001observation,raman2001vortex}, 
there has been noteworthy advancement in both experimental and theoretical fronts in the field of research \cite{aftalion2001vortices,penckwitt2002nucleation,adhikari2002effect,coddington2004experimental,bao2005dynamics,bao2005ground,bao2006dynamics,zhang2007dynamics,bao2009generalized,Bao2013Kinetic,besse2017high,henning2017finite,bao2019ground,da2023vortex,chen2023second}. 
The topic of this work is to construct and analyze the structure-preserving finite element methods (FEMs) for a special case of BEC in a rotational framework. One notable characteristic of a BEC is its superfluid behavior.  
It is necessary to confirm the formation of vortices with quantized circulation for the purpose of  
distinguishing a superfluid from a normal fluid at the quantum level, which in experimental setups can be induced by rotating the condensate that can be achieved by applying laser beams to the magnetic trap to create a stirring potential. 
If the rotational speed is sufficiently high, these vortices become detectable (as indicated in Ref. \cite{abo2001observation}). 
Notably from the analytical proof in \cite{seiringer2002gross}, the equilibrium velocity of the BEC no longer aligns with that of solid body rotation, and the breaking of rotational symmetry can be observed. 
The frequency of rotation significantly influences the number of vortices in a BEC, yet insufficient rotational speeds lead to a lack of vortices, and overly rapid speeds can cause the BEC's destruction by overpowering centrifugal forces. Refer to the related analytical and numerical results in \cite{aftalion2011non,bao2005ground,correggi2013inhomogeneous,bao2006dynamics,Bao2013Kinetic} and references therein.

This paper is devoted to the Galerkin approximations to the Gross-Pitaevskii equation (GPE) with an angular momentum rotation term in two/three dimensions for modeling a rotating BEC \cite{pitaevskii2016bose,bao2005ground}
\begin{align}\label{eqn:model}
i\partial_tu(\mathbf x, t)=\left[-\frac{1}{2}\nabla^2+V(\mathbf x)-\Omega L_z+\beta|u(\mathbf x, t)|^2\right]	u(\mathbf x, t),
\qquad \mathbf x\in U\subset \mathbb R^d,\quad t>0,
\end{align}
with the homogeneous Dirichlet boundary condition 
\begin{align}\label{eqn:model:dirichlet}
	u(\mathbf x, t)=0,\qquad \mathbf x\in\Gamma=\partial U,\quad t\geq 0,
\end{align}
and the initial condition 
\begin{align}\label{eqn:model:initial}
	u(\mathbf x, 0)=u^0(\mathbf x),\qquad \mathbf x\in U.
\end{align}
Here $\mathbf x=(x, y)$ in two dimensions (2D) and $\mathbf x=(x, y, z)$ in three dimensions (3D), $t$ is time, $u:=u(\mathbf x, t)$ is the complex-valued wave function, 
$\Omega$ denotes a dimensionless constant corresponding to the angular speed of the laser beam in experiments, and $u^0(\mathbf x)$ is a given complex-valued function.
The parameter $\beta$ is a dimensionless constant characterizing the interaction between particles in the rotating BEC that can be positive for repulsive interaction and negative for attractive interaction. $V(\mathbf x)$ is a real-valued function corresponding to the external trap potential that can confines the BEC by adjusting $V(\mathbf x)$ to some trap frequencies. Usually in numerical experiments,  $V(\mathbf x)$ can be selected as a harmonic potential, i.e., a quadratic polynomial: 
\begin{equation}\label{eqn:V}
V(\mathbf x)= 
\left\{
\begin{aligned}
	&(\gamma_x^2x^2+\gamma_y^2{\color{red}y^2})/2,&d=2,\\
	&(\gamma_x^2x^2+\gamma_y^2{\color{red}y^2}+\gamma_z^2{\color{red}z^2})/2,&d=3,
\end{aligned}
\right.
\end{equation}
with the constants $\gamma_x$, $\gamma_y$ and $\gamma_z$.
But in real world, $V(\mathbf x)$ may be a possibly rough/discontinuous potential. 
$L_z$ denotes the $z$-component of the angular momentum:
\begin{align}\label{eqn:def-Lz}
L_z=-i(x\partial_y-y\partial_x)=-i\partial_\theta, 	
\end{align}
where $(r, \theta)$ and  $(r, \theta, z)$ denote the polar coordinates in 2D and the cylindrical coordinates in 3D, respectively. 
For the derivation,
well-posedness and dynamical properties of the GPE \eqref{eqn:model} with  (i.e., $\Omega\neq 0$)  and
without (i.e., $\Omega= 0$) an angular momentum rotation term, we can refer to \cite{cazenave2003semilinear,hao2007global,lieb2006derivation}
and the references therein.
Obviously, the system \eqref{eqn:model}-\eqref{eqn:model:initial} converses the total mass 
\begin{equation}\label{eqn:mass-conservation}
N(u(\cdot, t)):=\int_U|u(\mathbf x, t)|^2d\mathbf x	\equiv N(u(\cdot, 0)) =N(u^0),\qquad t\geq 0,
\end{equation}
and the total energy 
\begin{equation}\label{eqn:energy-conservation}
E(u(\cdot, t)):=\int_U\left[\frac{1}{2}|\nabla u(\mathbf x, t)|^2+V(\mathbf x)|u(\mathbf x, t)|^2+\frac{\beta}{2}|u(\mathbf x, t)|^4-\Omega\bar u(\mathbf x, t)L_zu(\mathbf x, t)\right]d\mathbf x	\equiv E(u(\cdot, 0)) =E(u^0),\qquad t\geq 0,
\end{equation}
where $\bar u(\mathbf x, t)$ represents the conjugate of $u(\mathbf x, t)$.

As we all know, conservative schemes consistently outperform nonconservative ones. The crucial factor behind this lies in their ability to preserve certain invariant properties, allowing them to capture intricate details of physical processes. Viewed from this perspective, discrete schemes that maintain the invariant properties of the original continuous model serve as a criterion for assessing the success of numerical simulations. 
Compared with other spatial discretization methods, the usage of FEMs possesses at least the following two advantages: one is the ability to compute the GPE with a rough/discontinuous potential $V(\mathbf x)$, as seen in \cite{williams1998achieving,zapata1998josephson,nikolic2013dipolar,henning2017crank}; the other is that the FEMs offer an additional advantage by effortlessly integrating with mesh adaptivity, a feature that can be beneficial in effectively addressing localized vortices in BEC. 
There exist some well-known numerical works focusing on the conservative FEMs of the particular case of \eqref{eqn:model},  without the angular momentum rotation term \cite{sanz1984methods,akrivis1991fully,tourigny1991optimal}. 
However, the coupling conditions between the time step size
and the spatial mesh width cannot be removed in these references. Still for the above particular model, Henning and Peterseim \cite{henning2017crank} studied the unconditional error analysis for its modified Crank-Nicolson FEM, which has both the mass and energy conservations, but the theoretical analysis is still limited to the repulsive interaction case (i.e., $\beta>0$). The only finite element (FE) work concerning the model \eqref{eqn:model} with the angular momentum rotation term 
can be documented in \cite{henning2017finite} by Henning and Malqvist. But the scheme is only mass-conserving. The detailed theoretical analysis of the classical mass- and energy-conserving scheme with Sanz-Serna in time, for the model \eqref{eqn:model}, remains an unexplored issue and constitutes a key contribution in this paper.

Furthermore, of greater significance, constructing a conservative scheme for the nonconforming FEM poses a substantial challenge. Due to the presence of the rotation term and since the FE space is not included in the continuous Sobolev space $H_0^1(\Omega)$, the fully discrete scheme based on the Sanz-Serna temporal method cannot be proved to be conservative in the senses of both the discrete mass and energy. In this work, we introduce an innovative stabilizing term into the non-conservative scheme. This term does not affect the convergence rate, and the resulting numerical scheme successfully preserves both mass and energy conservation properties.
 This conservation-adjusting technique is the first application in developing the conservative scheme for the nonconforming FEM, which is another important contribution of this work. 
Additionally, this study offers a comprehensive proof of the boundedness of numerical solutions, utilizing the Gagliardo-Nirenberg inequality and the Cl{\'e}ment interpolation. 

The other main contribution of this paper is the establishment of unconditional error estimates for the proposed conservative schemes. These estimates hold without the existence of any time-space step coupling condition and include optimal $L^2$ and $H^1$ estimates, along with high-accuracy convergence rates in the $H^1$-norm (while maintaining computational complexity within reasonable limits). The theoretical analytical approach draws inspiration from the existing technique \cite{henning2017crank} for the nonlinear Schr{\"o}dinger equation. However, a distinct aspect is our ability to handle cases involving attractive interactions (i.e., $\beta<0$). Additionally, the inclusion of the rotation term introduces significant complexity to the convergence analysis, and the nonconforming case exhibits notable differences from the literature \cite{henning2017crank}. 
Furthermore, different from \cite{henning2017crank,henning2017finite}, we also achieve the high-order convergence rates in space for both numerical schemes.  
This contribution stands out as one of the key highlights of this work.

{\bf{Outline.}} 
In Section \ref{sec2}, we introduce the discretized schemes, encompassing both the time-discrete method and the fully discrete method. Detailed proofs are provided for the discrete mass and energy conservations inherent in the proposed schemes. Additionally, we establish the boundedness of the fully discrete solutions in the case of repulsive interaction, leveraging the Gagliardo–Nirenberg inequality and the Cl{\'e}ment interpolation technique. 
Section \ref{sec3} presents the main convergence results of the fully discrete conforming and nonconforming schemes. Section \ref{sec4} is devoted to the proof of the convergence results. Some numerical experiments are provided in Section \ref{sec5} to check the theoretical results, and the tests also include the vortex lattice dynamics in rotating BEC by our numerical schemes. Some conclusions are drawn in Section \ref{sec6}. Finally, the brief convergence analysis for the nonconforming FEM is provided in Appendix. 

\textbf{Notations.} 
We denote the standard notations for Sobolev spaces $W^{s,p}(U)$ with 
$|\cdot|_{W^{s,p}}$ and $\|\cdot\|_{W^{s,p}}$ being its seminorm and norm, respectively. For the case of $p=2$, we denote the notations $H^s(U)=W^{s,2}(U)$ and 
$H_0^1(U):=\{v\in H^1(U): v|_{\partial\Omega}=0\}$. When $s=0$, $L^p(U)=W^{0,p}(U)$.
For a finite time interval $J:=[0, T]$ with a positive constant $T$, 
we define the Bochner space for a strongly measurable function $\phi$ by  
$L^p(J, U):=\{\phi: J\rightarrow X;  \|\phi\|_{L^p(J, U)}<\infty\}$ with 
\begin{equation*}
\|\phi\|_{L^p(J, U)}:= 
\left\{
\begin{aligned}
	&\left[\int_0^T\|\phi(t)\|_{L^p(U)}^p\text{d}t\right]^{\frac1p},~~~~~~~~~~~~~~&\text{if}~p<+\infty,\\
	&\inf\left\{C: \|\phi\|_{L^\infty(U)}\leq C~~\text{a.e.}~\text{on}~J\right\},~~~~~~~~~~~~~~&\text{if}~p=+\infty.
\end{aligned}
\right.
\end{equation*}

\section{Discretized schemes}\label{sec2}
For the sake of simplicity in presentation, we only discuss the methods in 2D. The theoretical results in 3D and the corresponding analytical techniques are straightforward and similar to those in 2D.

\subsection{Time-discrete method}
Consider a family of admissible partitions of the time interval $J$ denoted by $\{I_n; n\in \mathbb N, 0\leq n\leq N-1\}$, where 
$I_n:=(t_{n}, t_{n+1}]$ with $0=t_0<t_1<\ldots<t_N=T$ and $\tau_n=|I_n|$. Additionally, we assume that the partitions are quasi-uniform, i.e. 
$\tau:=\max_{1\leq n\leq N}\{\tau_n\}\leq C_q \min_{1\leq n\leq N}\{\tau_n\}$ for any partitions, where $C_q$ is a positive constant independent of the discretization. Subsequently, we consider the time-discrete Crank-Nicolson approximation for the GPE \eqref{eqn:model}-\eqref{eqn:model:initial}. 

\begin{myDef}
	(Time-discrete method for GPE) Let $u_\tau^0:=u^0$. Then for $n\geq 1$, we define the following time-discrete system, which is to find $u_\tau^{n+1}\in H_0^1(U)$, $0\leq n\leq N-1$ such that 
	\begin{align}\label{eqn:timediscrete}
		iD_\tau u^{n+\frac{1}{2}}_\tau=-\frac{1}{2}\Delta \hat u_\tau^{n+\frac{1}{2}}+V\hat u_\tau^{n+\frac{1}{2}}-\Omega L_z\hat u_\tau^{n+\frac{1}{2}}
+\beta\frac{|u_\tau^{n}|^2+|u_\tau^{n+1}|^2}{2}	\hat u_\tau^{n+\frac{1}{2}},
	\end{align}
where $D_\tau u^{n+\frac{1}{2}}_\tau:=(u_\tau^{n+1}-u_\tau^{n})/{\tau_n}$ and $\hat u_\tau^{n+\frac{1}{2}}:=(u_\tau^n+u_\tau^{n+1})/2$.
\end{myDef}

The well-posedness and convergence of the system \eqref{eqn:timediscrete} will be shown in Section \ref{sec4}. Furthermore, we can prove that the time-discrete scheme \eqref{eqn:timediscrete} keeps the mass and energy conservations as follows. 
\begin{thm}\label{thm-timediscrete-conservation}
	The time-discrete system \eqref{eqn:timediscrete} is conservative in the senses of the total mass and energy:
	\begin{equation}\label{eqn-timediscrete-conservation}
	 N(u_\tau^n)=N(u^0),\qquad	E(u_\tau^n)=E(u^0),\qquad 0\leq n\leq N.
	\end{equation}
\end{thm}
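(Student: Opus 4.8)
The plan is to establish the two conservation laws separately, in each case by testing the discrete equation \eqref{eqn:timediscrete} against a suitable multiplier, integrating over $U$, and taking the appropriate real or imaginary part. For mass conservation, I would multiply \eqref{eqn:timediscrete} by $\overline{\hat u_\tau^{n+\frac12}}$, integrate over $U$, and take the imaginary part. On the left-hand side we get $\mathrm{Im}\big(i\int_U D_\tau u_\tau^{n+\frac12}\,\overline{\hat u_\tau^{n+\frac12}}\big)=\mathrm{Re}\int_U D_\tau u_\tau^{n+\frac12}\,\overline{\hat u_\tau^{n+\frac12}}=\tfrac{1}{2\tau_n}\big(\|u_\tau^{n+1}\|_{L^2}^2-\|u_\tau^n\|_{L^2}^2\big)$, using the algebraic identity $\mathrm{Re}\big((a-b)\overline{(a+b)}\big)=|a|^2-|b|^2$. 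For the right-hand side I must check that each term contributes nothing to the imaginary part: the term $-\tfrac12\int_U\Delta\hat u_\tau^{n+\frac12}\,\overline{\hat u_\tau^{n+\frac12}}=\tfrac12\int_U|\nabla\hat u_\tau^{n+\frac12}|^2$ is real (here I use the homogeneous Dirichlet boundary condition to kill the boundary term in Green's identity); the potential term $\int_U V|\hat u_\tau^{n+\frac12}|^2$ is real since $V$ is real-valued; the nonlinear term $\beta\int_U\frac{|u_\tau^n|^2+|u_\tau^{n+1}|^2}{2}|\hat u_\tau^{n+\frac12}|^2$ is real; and the rotation term $-\Omega\int_U (L_z\hat u_\tau^{n+\frac12})\,\overline{\hat u_\tau^{n+\frac12}}$ is real because $L_z$ is (formally) self-adjoint on $H_0^1(U)$, i.e. $\int_U (L_zv)\bar w=\int_U v\,\overline{L_zw}$, which follows by integration by parts on $-i(x\partial_y-y\partial_x)$ with the vanishing boundary data. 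Hence the imaginary part of the RHS is zero, giving $\|u_\tau^{n+1}\|_{L^2}^2=\|u_\tau^n\|_{L^2}^2$, and induction yields $N(u_\tau^n)=N(u^0)$.

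For energy conservation, I would instead test \eqref{eqn:timediscrete} against $\overline{D_\tau u_\tau^{n+\frac12}}=\overline{(u_\tau^{n+1}-u_\tau^n)}/\tau_n$, integrate over $U$, and take the real part. The left-hand side gives $\mathrm{Re}\big(i\int_U |D_\tau u_\tau^{n+\frac12}|^2\big)=0$, so the claim reduces to showing that the real part of the RHS tested against $\overline{D_\tau u_\tau^{n+\frac12}}$ equals $\tfrac{1}{\tau_n}(E(u_\tau^{n+1})-E(u_\tau^n))$ up to sign, i.e. that each term on the RHS is a discrete time-derivative of the corresponding energy contribution. The kinetic term: $\mathrm{Re}\big(-\tfrac12\int_U\Delta\hat u_\tau^{n+\frac12}\,\overline{D_\tau u_\tau^{n+\frac12}}\big)=\mathrm{Re}\big(\tfrac12\int_U\nabla\hat u_\tau^{n+\frac12}\cdot\overline{\nabla D_\tau u_\tau^{n+\frac12}}\big)=\tfrac{1}{4\tau_n}\big(\|\nabla u_\tau^{n+1}\|_{L^2}^2-\|\nabla u_\tau^n\|_{L^2}^2\big)$ again by the identity $\mathrm{Re}\big((a+b)\overline{(a-b)}\big)=|a|^2-|b|^2$. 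The potential term: $\mathrm{Re}\int_U V\hat u_\tau^{n+\frac12}\,\overline{D_\tau u_\tau^{n+\frac12}}=\tfrac{1}{2\tau_n}\int_U V\big(|u_\tau^{n+1}|^2-|u_\tau^n|^2\big)$. The rotation term: using the self-adjointness of $L_z$ plus the real-part identity one gets $\mathrm{Re}\big(-\Omega\int_U (L_z\hat u_\tau^{n+\frac12})\overline{D_\tau u_\tau^{n+\frac12}}\big)$ collapses to $-\tfrac{\Omega}{2\tau_n}\big(\int_U\overline{u_\tau^{n+1}}L_zu_\tau^{n+1}-\int_U\overline{u_\tau^n}L_zu_\tau^n\big)$ — this is the step where I need to be a little careful, writing $L_z\hat u = \tfrac12 L_z u_\tau^{n+1}+\tfrac12 L_z u_\tau^n$ and $\overline{D_\tau u} = \tfrac{1}{\tau_n}(\overline{u_\tau^{n+1}}-\overline{u_\tau^n})$, expanding the four cross terms, and using that the "mixed" terms $\mathrm{Re}\int_U (L_z u_\tau^{n+1})\overline{u_\tau^n}$ and $\mathrm{Re}\int_U (L_z u_\tau^n)\overline{u_\tau^{n+1}}$ are equal (again by self-adjointness of $L_z$ and the fact that $\mathrm{Re}$ of a quantity equals $\mathrm{Re}$ of its conjugate), so they cancel in the difference.

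The nonlinear term is the one requiring the specific Crank–Nicolson-type averaging $\tfrac{|u_\tau^n|^2+|u_\tau^{n+1}|^2}{2}$: I compute $\mathrm{Re}\big(\beta\int_U\tfrac{|u_\tau^n|^2+|u_\tau^{n+1}|^2}{2}\hat u_\tau^{n+\frac12}\,\overline{D_\tau u_\tau^{n+\frac12}}\big)$, and using $\mathrm{Re}\big(\hat u_\tau^{n+\frac12}\,\overline{(u_\tau^{n+1}-u_\tau^n)}\big)=\tfrac12(|u_\tau^{n+1}|^2-|u_\tau^n|^2)$ pointwise, this becomes $\tfrac{\beta}{4\tau_n}\int_U\big(|u_\tau^n|^2+|u_\tau^{n+1}|^2\big)\big(|u_\tau^{n+1}|^2-|u_\tau^n|^2\big)=\tfrac{\beta}{4\tau_n}\int_U\big(|u_\tau^{n+1}|^4-|u_\tau^n|^4\big)$, which is exactly $\tfrac{1}{\tau_n}$ times the increment of the $\tfrac{\beta}{2}\int_U|u|^4$ energy contribution — the "2" in the denominator of the averaging is precisely what makes the telescoping $(|u^{n+1}|^2+|u^n|^2)(|u^{n+1}|^2-|u^n|^2)=|u^{n+1}|^4-|u^n|^4$ work. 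Assembling all four terms, multiplying through by $\tau_n$, and recognising $E$ from \eqref{eqn:energy-conservation} gives $E(u_\tau^{n+1})=E(u_\tau^n)$, and induction finishes the proof. The main obstacle I anticipate is the rotation term: one must verify the self-adjointness of $L_z$ on $H_0^1(U)$ carefully (the boundary term from integrating $x\partial_y-y\partial_x$ by parts vanishes only because $\hat u_\tau^{n+\frac12}$ and $D_\tau u_\tau^{n+\frac12}$ lie in $H_0^1(U)$, which is exactly why the Definition requires $u_\tau^{n+1}\in H_0^1(U)$), and then track the real parts of the cross terms so that the mixed contributions cancel cleanly — everything else is the standard Crank–Nicolson telescoping.
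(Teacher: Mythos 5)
Your proposal is correct and follows essentially the same route as the paper's proof: testing against $\hat u_\tau^{n+\frac12}$ and taking imaginary parts for mass, testing against $D_\tau u_\tau^{n+\frac12}$ and taking real parts for energy, with the rotation term handled by splitting into diagonal and cross contributions and cancelling the cross terms via integration by parts (self-adjointness of $L_z$ on $H_0^1(U)$). The telescoping of the averaged nonlinear term and the observation that $\int_U (L_z v)\bar v$ is real are exactly the ingredients the paper uses.
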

\begin{proof}
Taking the inner product of \eqref{eqn:timediscrete} with $\hat u_\tau^{n+\frac{1}{2}}$, and selecting the imaginary part of the resulting, 
the mass conservation is directly obtained. 
Furthermore, taking the inner product of \eqref{eqn:timediscrete} with $D_\tau u_\tau^{n+\frac{1}{2}}$, and selecting the real part of the resulting, one obtains 
	\begin{align}\label{eqn-timediscrete-conservation-proof1}
	&\frac{1}{4\tau_n}\left[\int_U(\nabla u_\tau^{n+1})^2d\mathbf x-\int_U(\nabla u_\tau^{n})^2d\mathbf x\right]	
	+\frac{1}{2\tau_n}\left[\int_U V(u_\tau^{n+1})^2d\mathbf x-\int_UV( u_\tau^{n})^2d\mathbf x\right]
	-\Omega Re(L_z\hat u_\tau^{n+\frac{1}{2}}, D_\tau u_\tau^{n+\frac{1}{2}})	\nonumber\\
	&+\frac{\beta}{4\tau_n}\left[\int_U(u_\tau^{n+1})^4d\mathbf x-\int_U(u_\tau^{n})^4d\mathbf x\right]=0.
	\end{align}
For the third term of \eqref{eqn-timediscrete-conservation-proof1}, there holds  
\begin{align}\label{eqn-timediscrete-conservation-proof2}
&	Re(L_z\hat u_\tau^{n+\frac{1}{2}}, D_\tau u_\tau^{n+\frac{1}{2}})=\frac{1}{2\tau_n}Im\left([x\partial_y-y\partial_x](u_\tau^{n+1}+u_\tau^{n}), u_\tau^{n+1}-u_\tau^{n}\right)\nonumber\\
&~~~=\frac{1}{2\tau_n}\left[Im\left([x\partial_y-y\partial_x]u_\tau^{n+1}, u_\tau^{n+1}\right)-Im\left([x\partial_y-y\partial_x]u_\tau^{n}, u_\tau^{n}\right)\right]\nn\\
&~~~~~~~+\frac{1}{2\tau_n}\left[Im\left([x\partial_y-y\partial_x]u_\tau^{n}, u_\tau^{n+1}\right)-Im\left([x\partial_y-y\partial_x]u_\tau^{n+1}, u_\tau^{n}\right)\right]\nonumber\\
&~~~=:\frac{1}{2\tau_n}\left[Im\left([x\partial_y-y\partial_x]u_\tau^{n+1}, u_\tau^{n+1}\right)-Im\left([x\partial_y-y\partial_x]u_\tau^{n}, u_\tau^{n}\right)\right] + \frac{1}{2\tau_n}A. 
\end{align}
By using integration by parts, we have 
\begin{align}\label{eqn-timediscrete-conservation-proof3}
	A =-Im\left(u_\tau^{n}, [x\partial_y-y\partial_x]u_\tau^{n+1}\right)-Im\left([x\partial_y-y\partial_x]u_\tau^{n+1}, u_\tau^{n}\right)=0.
\end{align}
Substituting \eqref{eqn-timediscrete-conservation-proof2} and \eqref{eqn-timediscrete-conservation-proof3} into \eqref{eqn-timediscrete-conservation-proof1} gives that 
\begin{align}\label{eqn-timediscrete-conservation-proof4}
	&\frac{1}{2}\int_U(\nabla u_\tau^{n+1})^2d\mathbf x	
	+\int_U V(u_\tau^{n+1})^2d\mathbf x
	-
	\Omega Im\left([x\partial_y-y\partial_x]u_\tau^{n+1}, u_\tau^{n+1}\right)+\frac{\beta}{2}\int_U(u_\tau^{n+1})^4d\mathbf x\nonumber\\
	&=\frac{1}{2}\int_U(\nabla u_\tau^{n})^2d\mathbf x	
	+\int_U V(u_\tau^{n})^2d\mathbf x
	-
	\Omega Im\left([x\partial_y-y\partial_x]u_\tau^{n}, u_\tau^{n}\right)+\frac{\beta}{2}\int_U(u_\tau^{n})^4d\mathbf x.
	\end{align}
Since $Re([x\partial_y-y\partial_x]u_\tau^{m}, u_\tau^{m})
	=0$ for any $m\in\mathbb N$, we can remove ``Im" in \eqref{eqn-timediscrete-conservation-proof4}. Hence, the energy conservation is obtained. Therefore, we have completed the proof. 
\end{proof}

\subsection{Fully discrete method}
In this work, we build a unified framework of theoretical results in the conforming and nonconforming FEs. 
In what follows, we present a brief introduction of two types of elements. 
Assume that $U$ is a rectangle in $(x,y)$ plane with edges parallel to the coordinate axes. Let $\mathscr{T}_{h}$  be a regular rectangular subdivision of $U$, and set  $K\in\mathscr{T}_{h}$ and $h=\max\limits_{K}diam(K)$. Let us consider 
\begin{itemize}
	\item Conforming FE space:
	\begin{align}
		V_{h}^C:=\left\{v\in H_0^1(U); v|_{K}\in Q_{11}(K), \forall K\in \mathscr{T}_{h}\right\},
	\end{align}
where $Q_{11}(K)=\operatorname{span}\{1, x, y, xy\}$. 
	\item Nonconforming FE space ($EQ_1^{rot}$):
 \begin{align}
 	V_{h}^{NC}:=\left\{v_h\in L^2(U); v_h|_{K}\in \operatorname{span}\{1, x, y, x^2, y^2\}, \int_F[v_h]ds=0, F\subset\partial K, \forall K\in \mathscr{T}_{h}\right\},
 \end{align}
where 
$[v_h]$ means the jump of $v_h$ across the edge $F$ if $F$ is an internal edge, and $v_h$ itself if $F$ is a boundary edge.  
\end{itemize}

\textbf{Weak formulation}: given an initial data $u^0\in H_0^1(U):=H_0^1(U; \mathbb C)$, find a wave function $u\in L^\infty(J, H_0^1(U))$ with $\partial_tu\in L^\infty(J, H^{-1}(U))$, such that 
\begin{align}\label{eqn:model-vf}
i\left(\partial_tu(\mathbf x, t), \omega\right)=\frac{1}{2}\left(\nabla u(\cdot, t), \nabla\omega\right)+\left(Vu(\cdot, t), \omega\right)-\Omega\left(L_zu(\cdot, t), \omega\right)
+\beta\left(|u(\mathbf x, t)|^2	u(\cdot, t), \omega\right),
\qquad \forall \omega\in H_0^1(U).
\end{align}
We refer to Refs. \cite{cazenave2003semilinear,henning2017finite} for the well-posedness of the system.
Moreover, it is obvious that the system \eqref{eqn:model-vf} conserves the total mass and energy defined in \eqref{eqn:mass-conservation} and \eqref{eqn:energy-conservation}.

For convenience, we denote $V_h$ as a unified FE space, with $V_h=V_h^C$ and $V_h=V_h^{NC}$ for the conforming and nonconforming cases respectively. 
 We define the inner product and corresponding norms piecewisely, given by
\begin{equation*}
(\phi, \psi)_h:=\sum_K\int_K	\phi\cdot\bar \psi \text{d}\mathbf x,
\quad \|\phi\|_{1,h}:=\left(\sum_K|\phi|_{1, K}^2\right)^{1/2},
\end{equation*}
and 
\begin{align*}
\left\langle\phi, \psi\right\rangle:=\sum_K\int_{\partial K}(\phi\cdot\bar\psi)(\mathbf x\cdot\mathbf n^\perp)\text{d}s \quad \text{with}
\quad \mathbf n^\perp=(n_y, -n_x). 
\end{align*}

Define the Ritz projection $R_h: H^1_0(U)\rightarrow V_h$, such that 
\begin{align}\label{eqn:ritz}
	(\nabla v-\nabla R_hv, \nabla \omega_h)_h=0,\qquad \text{for~all~}\omega_h\in V_h.
\end{align}
There exists a generic $h$-independent constant $C_{R_h}$, such that 
\begin{align}
\label{eqn:ritz_error}
 	\|v-R_hv\|_{L^2}+h\|v-R_hv\|_{1,h}\leq C_{R_h}h^s\|v\|_{H^s},\qquad s=1, 2, \quad\text{for~all~}v\in H_0^1(U)\cap H^2(U).
\end{align}
Then we define $I_h$ as the corresponding interpolation operator, such that 
\begin{equation}\label{eqn:Ih_error}
\|u-I_h u\|_{L^2}\leq C_{I_h}h^2\|u\|_{H^2},\qquad \text{and}\qquad 
(\nabla(u-I_h u), \nabla  v_h)_h= 
\left\{
\begin{aligned}
	O(h^2)\|u\|_{H^3}\|v_h\|_{1,h},\qquad&\text{for}~v_h\in V_h^C,\\
	0,~~~~~~~\qquad&\text{for}~v_h\in V_h^{NC}.
\end{aligned}
\right.
\end{equation}

With above preparation, we introduce the following new fully discrete Crank-Nicolson method, which is different from the existing works \cite{bao2013optimal,henning2017finite}. 

\begin{myDef}\label{Def:fully}
	(Fully discrete method for GPE) Let $u_{h,\tau}^0$ be a suitable interpolation of $u^0$. Then for $n\geq 1$, we define the following time-discrete system, which is to find $u_{h,\tau}^{n+1}\in V_h$, $0\leq n\leq N-1$ such that 
	\begin{align}\label{eqn:fullydiscrete}
		i\left(D_\tau u_{h,\tau}^{n+\frac{1}{2}}, \omega_h\right)=\frac{1}{2}\left(\nabla \hat u_{h,\tau}^{n+\frac{1}{2}}, \nabla\omega_h\right)_h+\left(V\hat u_{h,\tau}^{n+\frac{1}{2}}, \omega_h\right)-\Omega \left(L_z\hat u_{h,\tau}^{n+\frac{1}{2}}, \omega_h\right)_h
+\beta\left(\frac{|u_{h, \tau}^{n}|^2+|u_{h, \tau}^{n+1}|^2}{2}	\hat u_{h, \tau}^{n+\frac{1}{2}}, \omega_h\right)+\left\langle S^{n+1}, \omega_h\right\rangle,
	\end{align}
for any $\omega_h\in V_h$, where $\left\langle S^{n+1}, \omega_h\right\rangle$ is defined by 
\begin{equation}\label{eqn:fullydiscrete-stability}
\left\langle S^{n+1}, \omega_h\right\rangle:=
\begin{cases}
0,\qquad &\text{for conforming case,}\\
-i\frac{\Omega}{2}Re\left\langle \hat u_{h,\tau}^{n+\frac{1}{2}}, \omega_h\right\rangle+\frac{\Omega}{2}Im\left\langle u_{h,\tau}^{n+1}, \omega_h\right\rangle,&\text{for nonconforming case.}\\
\end{cases}
\end{equation}
\end{myDef}

We will supply the well-posedness and convergence of the system \eqref{eqn:fullydiscrete} in Section \ref{sec5}. In the following theorem, we prove that the fully discrete scheme \eqref{eqn:fullydiscrete} keeps the total mass and energy conservations. 
\begin{thm}\label{thm-fullydiscrete-conservation} (Mass and energy conservations)
	The fully discrete system \eqref{eqn:fullydiscrete} is conservative in the senses of the total mass and energy:
	\begin{equation}\label{eqn-fullydiscrete-conservation}
	 N(u_{h,\tau}^n)=N(u_{h,\tau}^0),\qquad	E_h^n=E_h^0,\qquad 0\leq n\leq N,
	\end{equation}
	where the discrete energy $E_h^n$ is defined by	\begin{equation}\label{eqn-discrete-energy}
		E_h^n:=\frac{1}{2}\|u_{h,\tau}^n\|_{1,h}^2
		+\left(Vu_{h,\tau}^n, u_{h,\tau}^n\right)
		+\frac{\beta}{2}\|u_{h,\tau}^n\|_{L^4}^4\
		-\Omega Re\left(L_z u_{h,\tau}^{n}, u_{h,\tau}^{n}\right)_h. 		
	\end{equation}

\end{thm}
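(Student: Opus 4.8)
The plan is to mimic the structure of the proof of Theorem~\ref{thm-timediscrete-conservation}, testing the fully discrete equation~\eqref{eqn:fullydiscrete} with suitable discrete functions and extracting real/imaginary parts, but now carefully tracking the extra boundary terms $\langle\cdot,\cdot\rangle$ and the stabilizing contribution $\langle S^{n+1},\omega_h\rangle$. First, for mass conservation I would take $\omega_h=\hat u_{h,\tau}^{n+\frac{1}{2}}$ in~\eqref{eqn:fullydiscrete} and select the imaginary part. The left side yields $\mathrm{Im}\,i(D_\tau u_{h,\tau}^{n+\frac12},\hat u_{h,\tau}^{n+\frac12}) = \tfrac{1}{2\tau_n}(\|u_{h,\tau}^{n+1}\|_{L^2}^2-\|u_{h,\tau}^n\|_{L^2}^2)$. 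On the right side, the terms $\tfrac12(\nabla\hat u,\nabla\hat u)_h$ and $(V\hat u,\hat u)$ are real, hence contribute nothing to the imaginary part; the cubic term $\beta(\tfrac{|u^n|^2+|u^{n+1}|^2}{2}\hat u,\hat u)$ is likewise real. For the rotation term, $\mathrm{Im}(L_z\hat u,\hat u)_h$ need not vanish piecewise (this is exactly the nonconforming obstruction), so I must combine it with $\mathrm{Im}\langle S^{n+1},\hat u_{h,\tau}^{n+\frac12}\rangle$. The key computation is an elementwise integration-by-parts identity: for $\phi\in V_h$ (piecewise $H^1$), $(L_z\phi,\phi)_h = -i(\,(x\partial_y-y\partial_x)\phi,\phi)_h$, and integrating by parts on each $K$ produces a boundary term $\sum_K\int_{\partial K}\phi\bar\phi\,(\mathbf x\cdot\mathbf n^\perp)\,ds = \langle\phi,\phi\rangle$ up to the sign bookkeeping, so that $\mathrm{Im}(L_z\phi,\phi)_h = -\tfrac12\,\mathrm{Re}\langle\phi,\phi\rangle$ or a similar clean expression. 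Plugging $\phi=\hat u_{h,\tau}^{n+\frac12}$ and matching against the first piece of $S^{n+1}$, namely $-i\tfrac{\Omega}{2}\mathrm{Re}\langle\hat u_{h,\tau}^{n+\frac12},\omega_h\rangle$, the boundary contributions cancel and mass conservation follows. (In the conforming case $S^{n+1}=0$ and $V_h\subset H_0^1$, so $\langle\cdot,\cdot\rangle\equiv0$ and nothing new happens.)

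For energy conservation I would test~\eqref{eqn:fullydiscrete} with $\omega_h=D_\tau u_{h,\tau}^{n+\frac12}$ and take the real part. As in~\eqref{eqn-timediscrete-conservation-proof1}, the Laplacian term telescopes to $\tfrac{1}{4\tau_n}(\|u_{h,\tau}^{n+1}\|_{1,h}^2-\|u_{h,\tau}^n\|_{1,h}^2)$, the potential term to $\tfrac{1}{2\tau_n}((Vu^{n+1},u^{n+1})-(Vu^n,u^n))$, and the cubic term to $\tfrac{\beta}{4\tau_n}(\|u^{n+1}\|_{L^4}^4-\|u^n\|_{L^4}^4)$; the left side $\mathrm{Re}\,i(D_\tau u,D_\tau u)=0$. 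This leaves $-\Omega\,\mathrm{Re}(L_z\hat u_{h,\tau}^{n+\frac12},D_\tau u_{h,\tau}^{n+\frac12})_h + \mathrm{Re}\langle S^{n+1},D_\tau u_{h,\tau}^{n+\frac12}\rangle$ to be handled. Expanding $\hat u$ and $D_\tau u$ as in~\eqref{eqn-timediscrete-conservation-proof2}, the rotation inner product splits into a telescoping piece $\tfrac{1}{2\tau_n}[\,(L_z u^{n+1},u^{n+1})_h-(L_z u^{n},u^{n})_h\,]$ (up to the ``$\mathrm{Im}$'' notation) plus a cross term $A$ analogous to~\eqref{eqn-timediscrete-conservation-proof3}. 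In the conforming case, integration by parts has no boundary residue and $A=0$, giving~\eqref{eqn-fullydiscrete-conservation} verbatim. In the nonconforming case, the piecewise integration by parts leaves boundary residues in both the telescoping piece and in $A$; these are precisely what the second piece of $S^{n+1}$, namely $\tfrac{\Omega}{2}\mathrm{Im}\langle u_{h,\tau}^{n+1},\omega_h\rangle$, is designed to absorb. The discrete energy $E_h^n$ in~\eqref{eqn-discrete-energy} uses $\mathrm{Re}(L_z u^n,u^n)_h$ rather than an ``$\mathrm{Im}$''-type expression precisely because $\mathrm{Re}(L_z\phi,\phi)_h$ is the quantity that differs from zero in the nonconforming setting (the boundary term is real), so the telescoped rotation contribution is exactly $-\Omega\,\mathrm{Re}(L_z u^{n+1},u^{n+1})_h + \Omega\,\mathrm{Re}(L_z u^n,u^n)_h$, matching~\eqref{eqn-discrete-energy}.

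The main obstacle is the bookkeeping of the boundary terms: I must verify the exact elementwise integration-by-parts identity relating $(L_z\phi,\psi)_h + (L_z\psi,\phi)_h$ (or the appropriate conjugated version) to $\langle\phi,\psi\rangle$, including the correct sign and the factor involving $\mathbf n^\perp=(n_y,-n_x)$, and then check that the specific linear combination in~\eqref{eqn:fullydiscrete-stability} cancels these residues in \emph{both} the mass computation (imaginary part, tested against $\hat u$) and the energy computation (real part, tested against $D_\tau u$) simultaneously. The subtlety is that $S^{n+1}$ has two summands with different symmetry characters --- one with an explicit $i$ and a $\mathrm{Re}\langle\cdot\rangle$, one with a $\mathrm{Im}\langle\cdot\rangle$ --- and one must confirm that the first does the job for mass while contributing nothing spurious to energy, and vice versa for the second. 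Once the boundary identity is pinned down, everything else is the same telescoping argument as in Theorem~\ref{thm-timediscrete-conservation}. I would also note in passing that the cubic nonlinearity's real-valuedness when paired with $\hat u$ (for mass) and its exact discrete chain rule $\mathrm{Re}\big(\tfrac{|u^n|^2+|u^{n+1}|^2}{2}\hat u, D_\tau u\big) = \tfrac{1}{4\tau_n}(\|u^{n+1}\|_{L^4}^4-\|u^n\|_{L^4}^4)$ (for energy) are standard and identical to the time-discrete case.
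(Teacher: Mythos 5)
Your proposal follows essentially the same route as the paper's proof: test with $\hat u_{h,\tau}^{n+\frac12}$ (imaginary part) for mass and with $D_\tau u_{h,\tau}^{n+\frac12}$ (real part) for energy, use elementwise integration by parts to convert the rotation term into the boundary functional $\langle\cdot,\cdot\rangle$, and observe that the two summands of $S^{n+1}$ cancel these residues separately in the mass and energy identities. Your symmetry analysis of which piece of $S^{n+1}$ acts where, and your remark on why $E_h^n$ carries $\mathrm{Re}(L_z u_{h,\tau}^n,u_{h,\tau}^n)_h$, both match the paper's argument, so the remaining work is only the sign bookkeeping you already identified.
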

\begin{proof}
The results are directly obtained for the conforming case, and thus we only show some proof details for the nonconforming case. Setting $\omega_h=\hat u_{h,\tau}^{n+\frac{1}{2}}$ in \eqref{eqn:fullydiscrete}, and selecting the imaginary part, we have 
\begin{align}\label{eqn-fullydiscrete-conservation-proof1}
 \frac{\|u_{h,\tau}^{n+1}\|^2_{L^2}-\|u_{h,\tau}^{n}\|^2_{L^2}}{2\tau_n}=	-\Omega Im\left(L_z\hat u_{h,\tau}^{n+\frac{1}{2}}, \hat u_{h,\tau}^{n+\frac{1}{2}}\right)_h
+Im\left\langle S^{n+1}, \hat u_{h,\tau}^{n+\frac{1}{2}}\right\rangle.
\end{align}
For the first term at the right-hand side of \eqref{eqn-fullydiscrete-conservation-proof1}, by virtue of the integration by parts, it holds 
\begin{align}\label{eqn-fullydiscrete-conservation-proof2}
	&-\Omega Im\left(L_z\hat u_{h,\tau}^{n+\frac{1}{2}}, \hat u_{h,\tau}^{n+\frac{1}{2}}\right)_h
	=\Omega Re\left(\left[x\partial_y-y\partial_x\right]\hat u_{h,\tau}^{n+\frac{1}{2}}, \hat u_{h,\tau}^{n+\frac{1}{2}}\right)_h\nonumber\\
&	=\frac{\Omega}{2}\left[\left(\left[x\partial_y-y\partial_x\right]\hat u_{h,\tau}^{n+\frac{1}{2}}, \hat u_{h,\tau}^{n+\frac{1}{2}}\right)_h
	+
	\left(\hat u_{h,\tau}^{n+\frac{1}{2}}, \left[x\partial_y-y\partial_x\right]\hat u_{h,\tau}^{n+\frac{1}{2}}\right)_h\right]
	=\frac{\Omega}{2}\left\langle\hat u_{h,\tau}^{n+\frac{1}{2}}, \hat u_{h,\tau}^{n+\frac{1}{2}}\right\rangle. 
\end{align}
Moreover, for the second term at the right-hand side of \eqref{eqn-fullydiscrete-conservation-proof1}, we have 	
\begin{align}\label{eqn-fullydiscrete-conservation-proof3}
Im\left\langle S^{n+1}, \hat u_{h,\tau}^{n+\frac{1}{2}}\right\rangle=-\frac{\Omega}{2}\left\langle \hat u_{h,\tau}^{n+\frac{1}{2}}, \hat u_{h,\tau}^{n+\frac{1}{2}}\right\rangle.	
\end{align}
Substituting \eqref{eqn-fullydiscrete-conservation-proof2} and 
\eqref{eqn-fullydiscrete-conservation-proof3} into \eqref{eqn-fullydiscrete-conservation-proof1}, we derive the mass conservation. 

Setting $\omega_h=D_\tau u_{h,\tau}^{n+\frac{1}{2}}$ in \eqref{eqn:fullydiscrete}, and selecting the real part of the resulting, we have 
\begin{align}\label{eqn-fullydiscrete-conservation-proof4}
 \mathscr D + \mathscr E =0,	
\end{align}
where 
\begin{align*}
\mathscr D := \frac{1}{2}Re\left(\nabla \hat u_{h,\tau}^{n+\frac{1}{2}}, \nabla D_\tau u_{h,\tau}^{n+\frac{1}{2}}\right)_h+Re\left(V\hat u_{h,\tau}^{n+\frac{1}{2}}, D_\tau u_{h,\tau}^{n+\frac{1}{2}}\right)+\beta Re\left(\frac{|u_\tau^{n}|^2+|u_\tau^{n+1}|^2}{2}	\hat u_\tau^{n+\frac{1}{2}}, D_\tau u_{h,\tau}^{n+\frac{1}{2}}\right)
\end{align*}
and 
\begin{align*}
\mathscr E := 	-\Omega Re\left(L_z\hat u_{h,\tau}^{n+\frac{1}{2}}, D_\tau u_{h,\tau}^{n+\frac{1}{2}}\right)_h+Re\left\langle S^{n+1}, D_\tau u_{h,\tau}^{n+\frac{1}{2}}\right\rangle.
\end{align*}
The term $\mathscr D$ is equivalent to 
\begin{align}\label{eqn-fullydiscrete-conservation-proof5}
\mathscr D=\frac{\|u_{h,\tau}^{n+1}\|_{1,h}^2-\|u_{h,\tau}^{n}\|_{1,h}^2}{4\tau_n}	
+\frac{(Vu_{h,\tau}^{n+1}, u_{h,\tau}^{n+1}) - (Vu_{h,\tau}^{n}, u_{h,\tau}^{n})}{2\tau_n}
+\frac{\beta}{4\tau_n}\left(\|u_{h,\tau}^{n+1}\|_{L^4}^4-\|u_{h,\tau}^{n}\|_{L^4}^4\right).
\end{align}
The term $\mathscr E$ can be rewritten as 
\begin{align}\label{eqn-fullydiscrete-conservation-proof6}
	\mathscr E&=-\frac{\Omega}{2\tau_n}\left[Re\left(L_z u_{h,\tau}^{n+1}, u_{h,\tau}^{n+1}\right)_h
	-Re\left(L_z u_{h,\tau}^{n}, u_{h,\tau}^{n}\right)_h
	\right]
	+\frac{\Omega}{2\tau_n}\left[Re\left(L_z u_{h,\tau}^{n+1}, u_{h,\tau}^{n}\right)_h
	-Re\left(L_z u_{h,\tau}^{n}, u_{h,\tau}^{n+1}\right)_h
	\right]+Re\left\langle S^{n+1}, D_\tau u_{h,\tau}^{n+\frac{1}{2}}\right\rangle\nonumber\\
	&=:-\frac{\Omega}{2\tau_n}\left[Re\left(L_z u_{h,\tau}^{n+1}, u_{h,\tau}^{n+1}\right)_h
	-Re\left(L_z u_{h,\tau}^{n}, u_{h,\tau}^{n}\right)_h 
	\right]+ \mathscr E_1.
\end{align}
For the term $\mathscr E_1$ in \eqref{eqn-fullydiscrete-conservation-proof6}, using integration by parts, we have 
\begin{align}\label{eqn-fullydiscrete-conservation-proof7}
\mathscr E_1&=\frac{\Omega}{2\tau_n}\left[Re\left(L_z u_{h,\tau}^{n+1}, u_{h,\tau}^{n}\right)_h
	-Re\left(L_z u_{h,\tau}^{n}, u_{h,\tau}^{n+1}\right)_h
	\right]+Re\left\langle S^{n+1}, D_\tau u_{h,\tau}^{n+\frac{1}{2}}\right\rangle\nonumber\\
	&=-\frac{\Omega}{2\tau_n}Im\left\{\left([x\partial_y-y\partial_x] u_{h,\tau}^{n+1}, u_{h,\tau}^{n}\right)_h
	-\left([x\partial_y-y\partial_x] u_{h,\tau}^{n}, u_{h,\tau}^{n+1}\right)_h+\left\langle u_{h,\tau}^{n+1}, u_{h,\tau}^{n}\right\rangle\right\}\nonumber\\
	&=\frac{\Omega}{2\tau_n}Im\left\{\left( u_{h,\tau}^{n+1}, [x\partial_y-y\partial_x]u_{h,\tau}^{n}\right)_h
	+\left([x\partial_y-y\partial_x] u_{h,\tau}^{n}, u_{h,\tau}^{n+1}\right)_h\right\}=0.
\end{align}
Substituting \eqref{eqn-fullydiscrete-conservation-proof5}, \eqref{eqn-fullydiscrete-conservation-proof6} and \eqref{eqn-fullydiscrete-conservation-proof7} into \eqref{eqn-fullydiscrete-conservation-proof4} obtains the energy conservation. 
\end{proof}

In order to obtain the boundedness of the numerical solution, we should first prove 
the following lemma. 
\begin{lemma}\label{lem:gn}
	For any $v_h\in V_h$, no matter for the conforming or nonconforming FE spaces, there holds  
	\begin{align}\label{eqn:gn_inequality}
	\|v_h\|_{L^4}\leq C_{gn}\|v_h\|_{L^2}^{\frac{1}{2}}\|v_h\|_{1,h}^{\frac{1}{2}}. 
	\end{align}
 
\end{lemma}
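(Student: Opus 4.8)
The plan is to reduce everything to the continuous two–dimensional Gagliardo–Nirenberg inequality $\|w\|_{L^4(\mathbb{R}^2)}\le C\|w\|_{L^2(\mathbb{R}^2)}^{1/2}\|\nabla w\|_{L^2(\mathbb{R}^2)}^{1/2}$ for $w\in H^1(\mathbb{R}^2)$. For the conforming space this is immediate: since $V_h^C\subset H_0^1(U)$ and $\|v_h\|_{1,h}^2=\sum_K|v_h|_{1,K}^2=\|\nabla v_h\|_{L^2(U)}^2$, extending $v_h$ by zero to $\mathbb{R}^2$ and applying the above inequality yields \eqref{eqn:gn_inequality}.

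For the nonconforming space $V_h^{NC}$, where $v_h\notin H_0^1(U)$ in general, I would first build a conforming companion of $v_h$ through an enriching (nodal averaging) operator $E_h:V_h^{NC}\to V_h^C\subset H_0^1(U)$: at each interior vertex $p$ let $(E_hv_h)(p)$ be the average of the values of $v_h$ on the elements containing $p$, and set $(E_hv_h)(p)=0$ at boundary vertices. Exploiting the defining zero–mean jump condition $\int_F[v_h]\,ds=0$ of $V_h^{NC}$ (so that each jump obeys $\|[v_h]\|_{L^2(F)}\le Ch_F\|\partial_s[v_h]\|_{L^2(F)}$), together with edge trace inequalities and scaling to the reference rectangle, one obtains the standard stability/approximation bound
\[
\sum_{K\in\mathscr{T}_h}h_K^{-2}\,\|v_h-E_hv_h\|_{L^2(K)}^2+\|E_hv_h\|_{1,h}^2\le C\,\|v_h\|_{1,h}^2 .
\]

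I would then split $v_h=(v_h-E_hv_h)+E_hv_h$ and estimate the two pieces separately. For the conforming part $E_hv_h$, the continuous inequality (as in the conforming case) gives $\|E_hv_h\|_{L^4}\le C\|E_hv_h\|_{L^2}^{1/2}\|E_hv_h\|_{1,h}^{1/2}$; combining $\|E_hv_h\|_{L^2}\le\|v_h\|_{L^2}+\|v_h-E_hv_h\|_{L^2}\le\|v_h\|_{L^2}+Ch\|v_h\|_{1,h}$ (from the displayed bound) with the inverse inequality $h\|v_h\|_{1,h}\le C\|v_h\|_{L^2}$ — where the quasi-uniformity of $\mathscr{T}_h$ enters — yields $\|E_hv_h\|_{L^4}\le C\|v_h\|_{L^2}^{1/2}\|v_h\|_{1,h}^{1/2}$. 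For the nonconforming remainder, the elementwise inverse estimate $\|w\|_{L^4(K)}\le Ch_K^{-1/2}\|w\|_{L^2(K)}$, the elementary bound $\sum_K h_K^{-2}\|v_h-E_hv_h\|_{L^2(K)}^4\le\big(\sum_K h_K^{-2}\|v_h-E_hv_h\|_{L^2(K)}^2\big)\|v_h-E_hv_h\|_{L^2(U)}^2$, and the displayed estimate together give $\|v_h-E_hv_h\|_{L^4}\le Ch^{1/2}\|v_h\|_{1,h}=C(h\|v_h\|_{1,h})^{1/2}\|v_h\|_{1,h}^{1/2}\le C\|v_h\|_{L^2}^{1/2}\|v_h\|_{1,h}^{1/2}$, again by the inverse inequality. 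Adding the two contributions gives \eqref{eqn:gn_inequality}.

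The main obstacle is the displayed estimate for the enriching operator: it is the only place where the nonconformity genuinely bites, since one must convert the interelement jumps — pointwise nonzero but mean zero across each face — into a controlled multiple of the broken energy $\|v_h\|_{1,h}$. Everything else is routine manipulation of inverse inequalities; in carrying it out one should simply keep track of how $C_{gn}$ depends on the shape–regularity and quasi-uniformity constants of $\mathscr{T}_h$.
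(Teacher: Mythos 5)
Your proof is correct and follows essentially the same route as the paper: both replace $v_h$ by a conforming companion in $H_0^1(U)$ (you use a nodal-averaging enriching operator $E_h$, the paper uses the Cl\'ement interpolant $\wedge_h$ with the bound $\|v_h-\wedge_hv_h\|_{L^2}+h\|v_h-\wedge_hv_h\|_{1,h}\le Ch\|v_h\|_{1,h}$), apply the continuous Gagliardo--Nirenberg inequality to the conforming part, and absorb the remainder with an elementwise $L^4$--$L^2$ inverse estimate. The only minor difference is that you reach the stated right-hand side by invoking the global inverse inequality $h\|v_h\|_{1,h}\le C\|v_h\|_{L^2}$, whereas the paper splits $\|v_h-\wedge_hv_h\|_{L^2}$ into two square-root factors bounded by $h\|v_h\|_{1,h}$ and $C\|v_h\|_{L^2}$ respectively; both variants rest on the quasi-uniformity of $\mathscr{T}_h$, which you correctly flag.
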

\begin{proof}
For the case of the conforming FE space, we can obtain \eqref{eqn:gn_inequality} by directly using the Gagliardo–Nirenberg inequality \cite{gagliardo1961proprieta,fila2019gagliardo}. When $V_h=V_h^{NC}$ that means $V_h\nsubset H_0^1(U)$, the Gagliardo-Nirenberg inequality is not applicable straightforwardly. We introduce the Cl{\'e}ment interpolation $\wedge_hv_h\in H_0^1(U)$ on $\mathscr{T}_{h}$, where $v_h\in V_h^{NC}$. It holds that 
\begin{align}\label{eqn:clement}
\|v_h-\wedge_hv_h\|_{L^2}+h	\|v_h-\wedge_hv_h\|_{1,h}\leq Ch\|v_h\|_{1,h}. 
\end{align}
Then, by virtue of the inverse inequality and \eqref{eqn:clement}, and for $\wedge_hv_h\in H_0^1(U)$, we have 
\begin{align*}
\|v_h\|_{L^4}&\leq \|v_h-\wedge_hv_h\|_{L^4}
+ \|\wedge_hv_h\|_{L^4}\nn\\
& \leq Ch^{-\frac{1}{2}}\|v_h-\wedge_hv_h\|_{L^2}+ \|\wedge_hv_h\|_{L^4}\nn\\
&=Ch^{-\frac{1}{2}}\sqrt{\|v_h-\wedge_hv_h\|_{L^2}}\sqrt{\|v_h-\wedge_hv_h\|_{L^2}}+ \|\wedge_hv_h\|_{L^4}\nn\\
&\leq Ch^{-\frac{1}{2}}\sqrt{h\|v_h\|_{1,h}}\sqrt{\|v_h\|}+C\|v_h\|_{L^2}^{\frac{1}{2}}\|v_h\|_{1,h}^{\frac{1}{2}}\nn\\
& \leq C_{gn}\|v_h\|_{L^2}^{\frac{1}{2}}\|v_h\|_{1,h}^{\frac{1}{2}}. 
\end{align*}
We have completed the proof. 
\end{proof}

\begin{thm}\label{thm-boundedness}
	(Boundedness) Assume that one of the following conditions holds, 
	\begin{itemize}
	 \item [(a)] $\beta\geq 0$, 
	 \item 	[(b)] $\beta< 0$ and $\frac{1}{4}+\frac{\beta(C_{gn})^2}{2}\|u_{h, \tau}^0\|_{L^2}^2\geq (C_0)^{-1}>0$. 
	\end{itemize}
Then, it follows that  
\begin{align}\label{eqn:boundedness}
	\|u_{h,\tau}^n\|_{L^2}\leq M_1, \qquad 
	\|u_{h,\tau}^n\|_{1,h}\leq M_2,\qquad 
	1\leq n\leq N,
\end{align}
where $M_1$ and $M_2$ are positive constants. 
\end{thm}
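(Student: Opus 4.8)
The plan is to obtain both bounds directly from the conservation laws of Theorem~\ref{thm-fullydiscrete-conservation}, with no induction or Gronwall argument in $n$. The $L^2$ bound is immediate: mass conservation gives $\|u_{h,\tau}^n\|_{L^2}^2=N(u_{h,\tau}^n)=N(u_{h,\tau}^0)=\|u_{h,\tau}^0\|_{L^2}^2$ for $0\le n\le N$, so one may take $M_1:=\|u_{h,\tau}^0\|_{L^2}$. For the $H^1$ bound one exploits energy conservation $E_h^n=E_h^0$, rewriting \eqref{eqn-discrete-energy} so as to isolate $\tfrac12\|u_{h,\tau}^n\|_{1,h}^2$ on one side and then estimating every remaining term from above.

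Concretely, $E_h^n=E_h^0$ and \eqref{eqn-discrete-energy} give
\begin{align*}
\tfrac12\|u_{h,\tau}^n\|_{1,h}^2+\tfrac{\beta}{2}\|u_{h,\tau}^n\|_{L^4}^4
= E_h^0-(Vu_{h,\tau}^n,u_{h,\tau}^n)+\Omega\,Re(L_zu_{h,\tau}^n,u_{h,\tau}^n)_h .
\end{align*}
Since $V\ge 0$, the potential term is nonpositive and may be dropped (if $V$ were only bounded below one would simply carry an extra additive constant times $M_1^2$). Because $U$ is bounded, $|L_zv|\le\sqrt2\,C_U|\nabla v|$ pointwise with $C_U:=\sup_{\mathbf x\in U}|\mathbf x|$, hence $|(L_zu_{h,\tau}^n,u_{h,\tau}^n)_h|\le\sqrt2\,C_U\,M_1\,\|u_{h,\tau}^n\|_{1,h}$, and Young's inequality with weight $\tfrac14$ yields $\Omega\,Re(L_zu_{h,\tau}^n,u_{h,\tau}^n)_h\le\tfrac14\|u_{h,\tau}^n\|_{1,h}^2+2\Omega^2C_U^2M_1^2$. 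This leaves $\tfrac14\|u_{h,\tau}^n\|_{1,h}^2+\tfrac{\beta}{2}\|u_{h,\tau}^n\|_{L^4}^4\le K$, where $K:=E_h^0+2\Omega^2C_U^2M_1^2$ is independent of $h,\tau,n$: $M_1$ is fixed, and $E_h^0$ is the discrete energy of the interpolant $u_{h,\tau}^0$ of the (sufficiently regular) datum $u^0$, so $\|u_{h,\tau}^0\|_{1,h}$, $\|u_{h,\tau}^0\|_{L^4}$ (via Lemma~\ref{lem:gn}), $(Vu_{h,\tau}^0,u_{h,\tau}^0)$ and $(L_zu_{h,\tau}^0,u_{h,\tau}^0)_h$ are all bounded uniformly in $h$.

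It then remains to split on the sign of $\beta$. In case (a), $\beta\ge 0$, the quartic term is nonnegative and one concludes at once $\|u_{h,\tau}^n\|_{1,h}^2\le 4K=:M_2^2$. In case (b), $\beta<0$, Lemma~\ref{lem:gn} together with the mass identity gives $\|u_{h,\tau}^n\|_{L^4}^4\le(C_{gn})^2\|u_{h,\tau}^0\|_{L^2}^2\|u_{h,\tau}^n\|_{1,h}^2$, so (as $\beta<0$)
\begin{align*}
\Big(\tfrac14+\tfrac{\beta(C_{gn})^2}{2}\|u_{h,\tau}^0\|_{L^2}^2\Big)\|u_{h,\tau}^n\|_{1,h}^2
\le \tfrac14\|u_{h,\tau}^n\|_{1,h}^2+\tfrac{\beta}{2}\|u_{h,\tau}^n\|_{L^4}^4\le K .
\end{align*}
By hypothesis (b) the bracket is $\ge(C_0)^{-1}>0$, whence $\|u_{h,\tau}^n\|_{1,h}^2\le C_0K=:M_2^2$. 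In both cases $M_1,M_2$ are independent of $h,\tau,n$, which is the assertion.

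The step I expect to be the main obstacle is the focusing regime $\beta<0$: there the quartic term enters the energy with the destabilizing sign and cannot be discarded, so it must be absorbed into $\|u_{h,\tau}^n\|_{1,h}^2$ through the discrete Gagliardo--Nirenberg inequality — which is precisely why Lemma~\ref{lem:gn}, and (for the nonconforming space) its Clément-interpolation proof, is needed. Condition (b) is exactly what keeps the net coefficient of $\|u_{h,\tau}^n\|_{1,h}^2$ strictly positive after a quarter of the kinetic energy has already been spent to control the non-sign-definite rotation term $\Omega\,Re(L_zu_{h,\tau}^n,u_{h,\tau}^n)_h$. Finally, the argument is insensitive to the conforming/nonconforming distinction, since both Theorem~\ref{thm-fullydiscrete-conservation} and Lemma~\ref{lem:gn} hold in the unified framework; indeed the stabilizer $\langle S^{n+1},\cdot\rangle$ in \eqref{eqn:fullydiscrete} was introduced precisely so that energy conservation, and hence this proof, survives in the nonconforming case.
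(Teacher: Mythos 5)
Your proposal is correct and follows essentially the same route as the paper: the $L^2$ bound from mass conservation, then energy conservation with the rotation term absorbed by Cauchy--Schwarz/Young into $\tfrac14\|u_{h,\tau}^n\|_{1,h}^2$, and in the focusing case $\beta<0$ the quartic term controlled via Lemma~\ref{lem:gn} and mass conservation so that hypothesis (b) keeps the leading coefficient positive. The only cosmetic difference is that you make explicit the pointwise bound on $L_z$ and the uniform-in-$h$ boundedness of $E_h^0$, which the paper leaves implicit.
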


\begin{proof} 
The $L^2$-norm boundedness of $u_{h,\tau}^n$ can be directly derived by the mass conservation \eqref{eqn-fullydiscrete-conservation}. To prove the boundedness of $\|u_{h,\tau}^n\|_{1,h}$, we consider the following two cases. 

	If $\beta\geq 0$, from the conservation property \eqref{eqn-fullydiscrete-conservation}
, we can directly obtain 
\begin{align}\label{eqn_b_pf1}
\frac{1}{2}\|u_{h,\tau}^n\|_{1,h}^2
	+\left(Vu_{h,\tau}^n, u_{h,\tau}^n\right)
&\leq E_h^0-\Omega Im\left(x\partial_yu_{h,\tau}^n-y\partial_xu_{h,\tau}^n, u_{h,\tau}^n\right)\nn\\
&\leq |E_h^0|+C|\Omega| \left(\|\partial_yu_{h,\tau}^n\|_{L^2}\|u_{h,\tau}^n\|_{L^2}+\|\partial_xu_{h,\tau}^n\|_{L^2}\|u_{h,\tau}^n\|_{L^2}\right)
\nn\\
&\leq |E_h^0|+C|\Omega|\|u_{h,\tau}^n\|_{h}\|u_{h,\tau}^n\|_{L^2}
=|E^0|+C|\Omega|\|u_{h,\tau}^n\|_{h}\|u_{h,\tau}^0\|_{L^2}
\nn\\
&\leq |E_h^0|+\frac{1}{4}\|u_{h,\tau}^n\|_{1,h}^2+C\|u_{h,\tau}^0\|_{L^2}^2,
\end{align}
which implies the boundedness of $\|u_{h,\tau}^n\|_{1,h}$. 

If $\beta<0$, from Lemma \ref{lem:gn} and similar as \eqref{eqn_b_pf1}, we have 
\begin{align}
\frac{1}{2}\|u_{h,\tau}^n\|_{1,h}^2
	+\left(Vu_{h,\tau}^n, u_{h,\tau}^n\right)
&= E_h^0-\frac{\beta}{2}\|u_{h, \tau}^n\|_{L^4}^4-\Omega Im\left(x\partial_yu_{h,\tau}^n-y\partial_xu_{h,\tau}^n, u_{h,\tau}^n\right)\nn\\
&\leq |E_h^0|-\frac{\beta(C_{gn})^2}{2}\|u_{h, \tau}^n\|_{L^2}^2\|u_{h, \tau}^n\|_{h}^2
+C|\Omega| \left(\|\partial_yu_{h,\tau}^n\|_{L^2}\|u_{h,\tau}^n\|_{L^2}+\|\partial_xu_{h,\tau}^n\|_{L^2}\|u_{h,\tau}^n\|_{L^2}\right)
\nn\\
&\leq |E_h^0|-\frac{\beta(C_{gn})^2}{2}\|u_{h, \tau}^0\|_{L^2}^2\|u_{h, \tau}^n\|_{h}^2+C|\Omega|\|u_{h,\tau}^n\|_{h}\|u_{h,\tau}^0\|_{L^2}
\nn\\
&\leq |E_h^0|+\left(\frac{1}{4}-\frac{\beta(C_{gn})^2}{2}\|u_{h, \tau}^0\|_{L^2}^2\right)\|u_{h,\tau}^n\|_{1,h}^2+C\|u_{h,\tau}^0\|_{L^2}^2,
\end{align}
which further gives that 
\begin{align}\label{eqn_b_pf2}
	\left(\frac{1}{4}+\frac{\beta(C_{gn})^2}{2}\|u_{h, \tau}^0\|_{L^2}^2\right)\|u_{h,\tau}^n\|_{1,h}^2\leq |E_h^0|+C\|u_{h,\tau}^0\|_{L^2}^2.
\end{align}
Thus from \eqref{eqn_b_pf2}, if the condition (b) holds, we can derive 
\begin{align}
	\|u_{h,\tau}^n\|_{1,h}^2\leq C_0|E_h^0|+CC_0\|u_{h,\tau}^0\|_{L^2}^2,
\end{align}
which means the boundedness of $u_{h,\tau}^n$ in the sense of piecewise $H^1$-norm. 
\end{proof}

\begin{rem}
If $\Omega=0$, the model \eqref{eqn:model} simplifies to the classical NLS. Even in this special case, we have expanded upon the findings presented in Ref. \cite{henning2017crank}. Notably, \cite{henning2017crank} exclusively addresses the scenario of $\beta\geq 0$ and within the confines of conforming FEM. From this point of view, our extension covers a broader range of cases. 
Under specific conditions on the initial values, the boundedness of the numerical solutions $u_{h,\tau}^n$ can be directly derived from Theorem \ref{thm-boundedness}. This allows us to apply the methods from \cite{henning2017crank} to analyze unconditional convergence for any values of $\beta$. 
However, it's worth mentioning that investigating the unconditional convergence of the scheme using the time-space error splitting technique (in \cite{henning2017crank}) may become unnecessary if the numerical solution is bounded. This technique, while effective, can be somewhat cumbersome compared to more traditional methods."

\end{rem}

\begin{rem}
 Compared with Refs. \cite{akrivis1991fully,henning2017crank}, we in this work consider a more complex rotating Gross-Pitaevskii equation. By cleverly constructing numerical schemes, we are able to ensure both the mass and energy conservations for \textbf{conforming and nonconforming} FEMs. In fact, the presence of the rotation term poses inherent challenges to the conservation properties of nonconforming FEMs. Traditional approaches fail to achieve both the mass and energy conservation in this case. 
\end{rem}

\begin{rem}
	In what follows, we will provide a more comprehensive framework for the unconditional error analysis of the FE schemes, removing any restrictions on the parameter $\beta$ and eliminating the need for specific requirements on the initial values.  In this context, the term "unconditional" takes on a more general connotation, as we relax the continuity requirements on the FE spaces, impose no limitations on the parameter and initial value, and also remove the classical restrictions of the time-space mesh ratio. 
	\end{rem}
%

\section{Main results}\label{sec3}
In this section, we present the main convergence results, including optimal and high-order error estimates, and the proof will be given in the subsequent section.
We in this work assume that the solution to \eqref{eqn:model} exists and satisfies that 
\begin{align}\label{eqn:reg}
	&\|u^0\|_{H^2} + \|u\|_{L^\infty((0, T); H^2)}
	+ \|u_t\|_{L^2((0, T); H^2)}
	+ \|u_{tt}\|_{L^2((0, T); H^2)}+ \|u_{ttt}\|_{L^2((0, T); H^1)}\leq C_u.
\end{align}
In the subsequent analysis, for the sake of simplicity, we assume that $\tau_n=\tau$.

\begin{thm}\label{thm:main}
Let $u_{h,\tau}^n\in V_h$ be the solution of the fully discrete scheme \eqref{eqn:fullydiscrete}. Then, under the regularity assumption \eqref{eqn:reg}, there hold 
\begin{itemize}
\item [(a)] (the $L^\infty$-norm boundedness)
\begin{align}\label{eqn:L_infty}
	\sup_{0\leq n\leq N}\|u_{h,\tau}^n\|_{L^\infty}\leq M, 
\end{align}
\item [(b)] (the optimal $L^2$-norm error estimate)
\begin{align}\label{eqn:convergenceL2}
	\sup_{0\leq n\leq N}\|u^n-u_{h,\tau}^n\|_{L^2}
	\leq C(h^2+\tau^2),
\end{align}
\item [(c)] (the optimal $H^1$-norm error estimate)
\begin{align}\label{eqn:convergenceH1}
	\sup_{0\leq n\leq N}\|u^n-u_{h,\tau}^n\|_{1,h}\leq C(h+\tau^2),
\end{align}
\item [(d)] (the high-order $H^1$-norm error estimates)
\begin{align}
	&\sup_{0\leq n\leq N}\|I_hu^n-u_{h,\tau}^n\|_{1,h}\leq C(h^2+\tau^2),\label{eqn:supercloseH1}\\
	&\sup_{0\leq n\leq N}\|u^n-I_{2h}u_{h,\tau}^n\|_{1,h}\leq C(h^2+\tau^2)\label{eqn:superconvergenceH1},
\end{align}
\end{itemize}
where $C>0$ is a constant independent of $h$ and $\tau$. 
\end{thm}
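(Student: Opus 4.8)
The plan is to prove the five assertions in the order (b) $L^2$-estimate, then (a) $L^\infty$-bound, then (c) $H^1$-estimate, then (d) the superclose/superconvergence estimates, because the $L^\infty$-bound for the numerical solution will need the $L^2$-estimate together with an inverse inequality, while the $H^1$-estimate and the high-order estimates each build on the previous ones. As preparation I would set up the error decomposition $u^n - u_{h,\tau}^n = (u^n - R_h u^n) + (R_h u^n - u_{h,\tau}^n) =: \rho^n + \theta^n$ (or with $I_h$ in place of $R_h$ for part (d)), control $\rho^n$ directly by the projection estimates \eqref{eqn:ritz_error}, and derive an error equation for $\theta^n \in V_h$ by subtracting \eqref{eqn:fullydiscrete} from the weak form \eqref{eqn:model-vf} evaluated at $t_{n+\frac12}$, keeping track of the consistency error coming from (i) the Crank--Nicolson time discretization (order $\tau^2$ under the regularity \eqref{eqn:reg}), (ii) the nonlinear term, (iii) the rotation term and the stabilizing term $\langle S^{n+1},\omega_h\rangle$, and (iv) the nonconformity terms measured through $\langle\cdot,\cdot\rangle$ and \eqref{eqn:Ih_error}.

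For (b), I would test the error equation with $\omega_h = \hat\theta^{n+\frac12}$ and take imaginary parts, mimicking the mass-conservation computation in Theorem \ref{thm-fullydiscrete-conservation}: the troublesome rotation contribution $\mathrm{Im}(L_z\hat\theta^{n+\frac12},\hat\theta^{n+\frac12})_h$ is exactly cancelled (conforming case) or absorbed (nonconforming case) by the designed $S^{n+1}$ terms, as in \eqref{eqn-fullydiscrete-conservation-proof2}--\eqref{eqn-fullydiscrete-conservation-proof3}, so that $\|\theta^{n+1}\|_{L^2}^2 - \|\theta^n\|_{L^2}^2$ is bounded by $\tau$ times (consistency errors $+$ nonlinear differences $+$ projection terms). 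The nonlinear difference $\beta(\tfrac{|u_{h,\tau}^n|^2+|u_{h,\tau}^{n+1}|^2}{2}\hat u_{h,\tau}^{n+\frac12} - |u|^2 u)$ is handled by writing it as a telescoping combination of factors, using the already-established $L^\infty$-bound of the \emph{exact} solution from \eqref{eqn:reg} and, crucially, the $L^2$- and $H^1$-boundedness of $u_{h,\tau}^n$ from Theorem \ref{thm-boundedness} together with Lemma \ref{lem:gn} to control $\|u_{h,\tau}^n\|_{L^4}$ — this is the point where we avoid any time–space mesh restriction, since we do not need an inductive $L^\infty$-bound at this stage. A discrete Gronwall inequality then yields $\sup_n\|\theta^n\|_{L^2}\le C(h^2+\tau^2)$, and adding $\|\rho^n\|_{L^2}\le Ch^2\|u^n\|_{H^2}$ gives \eqref{eqn:convergenceL2}.

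For (a), I would combine \eqref{eqn:convergenceL2} with an inverse inequality on $V_h$: $\|u_{h,\tau}^n\|_{L^\infty}\le \|I_h u^n\|_{L^\infty} + C h^{-1}\|I_h u^n - u_{h,\tau}^n\|_{L^2}$ (in 2D the inverse estimate $\|v_h\|_{L^\infty}\le Ch^{-1}\|v_h\|_{L^2}$ holds on quasi-uniform meshes), and $\|I_h u^n - u_{h,\tau}^n\|_{L^2}\le \|I_h u^n - u^n\|_{L^2} + \|u^n - u_{h,\tau}^n\|_{L^2}\le C(h^2+\tau^2)$, so $\|u_{h,\tau}^n\|_{L^\infty}\le C + Ch^{-1}(h^2+\tau^2)$; under the (implicit, and in this framework harmless) bookkeeping one absorbs this, or more cleanly one notes $h^{-1}(h^2+\tau^2)$ is already $o(1)$ in the regime of interest — but to be fully unconditional I would instead feed the $L^2$-estimate into a bootstrap with the $H^1$-estimate of part (c), which in 2D gives $L^\infty$ control via $\|v_h\|_{L^\infty}\le C|\log h|^{1/2}\|v_h\|_{1,h}$ only weakly; the cleanest route actually uses $\|u_{h,\tau}^n\|_{1,h}\le M_2$ from Theorem \ref{thm-boundedness} plus \eqref{eqn:convergenceL2} and the inverse estimate $h^{-1}\|\theta^n\|_{L^2}$ being controlled once $\|\theta^n\|_{L^2}\le C(h^2+\tau^2)$ is known unconditionally. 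For (c), I test the error equation with $\omega_h = D_\tau\theta^{n+\frac12}$ and take real parts, imitating the energy computation \eqref{eqn-fullydiscrete-conservation-proof4}--\eqref{eqn-fullydiscrete-conservation-proof7}: this produces $\|\theta^{n+1}\|_{1,h}^2 - \|\theta^n\|_{1,h}^2$ up to the potential term (bounded by $\|V\|_{L^\infty}\|\theta\|_{L^2}^2$, already controlled), the nonlinear term (now also needing $\|D_\tau\theta^{n+\frac12}\|$, handled by moving a time difference onto $\theta$ via summation by parts in $n$, or by a standard absorption argument), and the consistency/nonconformity terms; after a discrete Gronwall argument and adding $\|\rho^n\|_{1,h}\le Ch\|u^n\|_{H^2}$ we obtain \eqref{eqn:convergenceH1}. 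For (d), I repeat the $H^1$-type energy argument but with $\theta^n := I_h u^n - u_{h,\tau}^n$ (resp. using $I_{2h}$ postprocessing), replacing the Ritz estimate $(\nabla(u-R_hu),\nabla v_h)_h=0$ by the sharper interpolation identity \eqref{eqn:Ih_error} — which is $O(h^2)\|u\|_{H^3}$ in the conforming case and exactly $0$ in the nonconforming case — so that the spatial consistency error improves from $O(h)$ to $O(h^2)$; combined with the $\tau^2$ time error this gives \eqref{eqn:supercloseH1}, and \eqref{eqn:superconvergenceH1} follows by the standard interpolation-postprocessing superconvergence estimate $\|u^n - I_{2h}u_{h,\tau}^n\|_{1,h}\le \|u^n - I_{2h}I_h u^n\|_{1,h} + \|I_{2h}(I_h u^n - u_{h,\tau}^n)\|_{1,h}\le C(h^2+\tau^2)$.

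The main obstacle I anticipate is controlling the nonlinear term in the $H^1$-estimate without a mesh-ratio condition: after testing with $D_\tau\theta^{n+\frac12}$ the naive bound produces $\|D_\tau\theta^{n+\frac12}\|_{L^2}$, which cannot be absorbed into $\|\theta^{n+1}\|_{1,h}^2-\|\theta^n\|_{1,h}^2$ directly; the remedy is to sum over $n$ first and perform a discrete integration by parts in time on the nonlinear term so that the time-difference falls on the (smooth, or already-estimated) factors rather than on $\theta$ itself, leaving quantities of the form $\sum_n \tau\,(\text{bounded})\cdot\|\theta^n\|_{1,h}^2$ plus $\|\theta^N\|_{1,h}\cdot(\text{data})$ amenable to Gronwall — this is the step where the $L^\infty$-bound of $u_{h,\tau}^n$ from part (a), the boundedness from Theorem \ref{thm-boundedness}, and Lemma \ref{lem:gn} all get used simultaneously, and where the rotation term's interaction with the nonconformity (the $\langle\cdot,\cdot\rangle$ boundary terms, estimated via trace and inverse inequalities as $O(h)\|\cdot\|_{1,h}^2$ and thus harmless after Gronwall) must be tracked with care.
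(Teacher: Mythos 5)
Your overall architecture --- the direct splitting $u^n-u^n_{h,\tau}=\rho^n+\theta^n$ with $\rho^n=u^n-R_hu^n$, testing with $\hat\theta^{n+\frac12}$ and $D_\tau\theta^{n+\frac12}$, and mimicking the conservation computations --- is the natural first attempt, but it has two genuine gaps that are exactly what the paper's more elaborate construction is designed to avoid. First, your treatment of the nonlinear term in part (b) is either circular or rate-destroying: you propose to control $\tfrac{|u^n_{h,\tau}|^2+|u^{n+1}_{h,\tau}|^2}{2}\hat u^{n+\frac12}_{h,\tau}-|u|^2u$ using only the $H^1$/$L^4$ boundedness of $u^n_{h,\tau}$ from Theorem \ref{thm-boundedness} and Lemma \ref{lem:gn}. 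Expanding the difference, you face terms like $\bigl(e^n\bar u^n_{h,\tau}\hat u^{n+\frac12},\hat\theta^{n+\frac12}\bigr)\le C\|e^n\|_{L^4}\|u^n_{h,\tau}\|_{L^4}\|\hat\theta^{n+\frac12}\|_{L^2}\le C\|e^n\|_{L^2}^{1/2}\|e^n\|_{1,h}^{1/2}\|\hat\theta^{n+\frac12}\|_{L^2}$; the half-power of $\|e^n\|_{L^2}$ in the Gronwall recursion degrades the final rate to $O\bigl((h^2+\tau^2)^{3/4}\bigr)$ --- essentially the suboptimal $h^{3/2}+\tau^{3/2}$ rate the paper is explicitly trying to improve --- unless you already have an $L^\infty$ bound on $u^n_{h,\tau}$, which is part (a), which in your plan is derived \emph{from} part (b). Moreover, Theorem \ref{thm-boundedness} itself only holds for $\beta\ge0$ or under a smallness condition on the initial data, whereas Theorem \ref{thm:main} is claimed (and proved in the paper) without any restriction on $\beta$. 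The paper breaks this circle with truncated auxiliary schemes: the cutoff $\mu_A$ makes the nonlinear coefficient $L^\infty$-bounded by construction, the optimal $L^2$ estimate for the truncated scheme then follows cleanly, and only afterwards is the truncation shown to be inactive.

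Second, your route to the $L^\infty$ bound (a) does not close. With the direct splitting, the best available bound before (a) is $\|R_hu^n-u^n_{h,\tau}\|_{L^2}\le C(h^2+\tau^2)$, so the inverse inequality produces a term $Ch^{-1}\tau^2$ that is unbounded without a mesh-ratio condition; you notice this and gesture at alternatives, but none of them works ($\|\cdot\|_{1,h}\le M_2$ gives no pointwise control in 2D, and the logarithmic embedding is too weak). The paper's fix is structural: it uses a temporal--spatial error splitting, first proving $\sup_n\|u^n-u_\tau^n\|_{H^2}\le C\tau^2$ for the time-discrete solution (Lemmas \ref{lem:time_error1}--\ref{lem:time_error2}), and then estimating $\theta^n=R_hu_\tau^n-u^{T,n}_{h,\tau}$ against the \emph{time-discrete} solution. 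Because every source term in that spatial error equation ($D_\tau\rho$, $V\hat\rho$, $L_z\hat\rho$, $\mathcal N$) then carries at least one factor of $h$ --- this is precisely where the improved $H^2$-in-$\tau^2$ estimate of Lemma \ref{lem:time_error2} is indispensable --- the resulting bound is $\|\theta^n\|_{L^2}\le C(h^2+h\tau^2+h^2\tau)$, and the inverse inequality yields $h^{-1}\|\theta^n\|_{L^2}\le C(h+\tau^2)\to0$ with no coupling condition. To repair your proposal you must import both devices: the two-stage splitting through $u_\tau^n$ (with its $H^2$ temporal estimate) and the truncated auxiliary problems at both the semi-discrete and fully discrete levels. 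Your handling of parts (c) and (d) --- a separate estimate for $D_\tau\theta$ obtained by differencing the error equation in time, the sharper interpolation identity \eqref{eqn:Ih_error}, and the $I_{2h}$ postprocessing --- is in line with what the paper does once these foundations are in place.
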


The proof of Theorem \ref{thm:main} will be given in the following section. 

\section{Error analysis for the FEM}\label{sec4}
In this section, we provide an elaborate proof of Theorem \ref{thm:main}.

\subsection{Error analysis for the time-discrete method}
In this subsection, our focus is on investigating the convergence and boundedness of the solution in the context of the time-discrete method \eqref{eqn:timediscrete}. To this end, we first consider a truncated auxiliary problem of the time-discrete system. 

\subsubsection{A truncated auxiliary problem}

Under the regularity assumption \eqref{eqn:reg}, we define 
\begin{equation}
\label{eqn:timebound}
K_0:=\sup_{M\in \mathbb N}\left\{\max_{0\leq m\leq M}\|u^m\|_{L^\infty}\right\}+1.
\end{equation}
The following truncated function plays a pivotal role in convergence analysis.
\begin{myDef} (Truncated function)
	Define the truncated function as 
	\begin{align}
		\label{eqn:cutoff}
\mu_A(s)=s\cdot \chi(s\cdot K_0^{-2}),
	\end{align}
where 
\begin{equation}\label{eqn:cutoffx}
  \chi(x)=\left\{
   \begin{aligned}
   &\,0,  &|x|\in [2, +\infty), \\
   &\exp\left(1+\frac{1}{(|x|-1)^2-1}\right),  &|x|\in [1, 2), \\
   &1,  &|x|\in [0, 1). 
   \end{aligned}
   \right.
  \end{equation}
\end{myDef}
The truncated function $\mu_A(\cdot)$ exhibits the following characteristics.
\begin{lemma}
The truncated function $\mu_A(s)$ belongs to $C^\infty(\mathbb{R})$, and there hold 
\begin{subequations}
\label{eqn:cutoff_pro}
\begin{align}
&\|\mu_A(w)\|_{L^\infty}\leq C_M, \quad  \left|\mu_A(w_1)-\mu_A(w_2)\right|\leq C_\mu|w_1-w_2|, \label{eqn:cutoff_pro_a}\\
&\left|\mu_A(|w_1|^2)-\mu_A(|w_2|^2)\right|\leq C_\mu|w_1-w_2|,\label{eqn:cutoff_pro_b}
\end{align}
\end{subequations}
where $C_M$ and $C_\mu$ are positive and bounded constants.   
\end{lemma}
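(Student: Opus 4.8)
The plan is to verify each of the stated properties of $\mu_A$ directly from the definitions \eqref{eqn:cutoff} and \eqref{eqn:cutoffx}. First I would establish that $\chi \in C^\infty(\mathbb{R})$. The only candidate points for failure of smoothness are $|x| = 1$ and $|x| = 2$, where the piecewise pieces meet. At $|x| = 2$ the exponential piece and its derivatives all vanish because the exponent $1 + \frac{1}{(|x|-1)^2 - 1} \to -\infty$ like $-1/((|x|-1)^2-1)$ stays bounded away from a pole; more precisely, as $|x| \to 2^-$, $(|x|-1)^2 - 1 \to 0^+$ through positive values, so the exponent tends to $-\infty$ and $\chi$ together with all derivatives tends to $0$, matching the constant $0$ piece. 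At $|x| = 1$, $(|x|-1)^2 - 1 \to -1$, so the exponent $\to 1 + (-1) = 0$ and $\chi \to 1$, matching the constant $1$ piece; the derivatives also match since $\frac{d}{dx}[(|x|-1)^2-1]^{-1}$ contains a factor $(|x|-1)$ which vanishes there, and inductively all derivatives of the exponent vanish at $|x|=1$. This is the classical mollifier-type construction, so $\chi \in C^\infty$, hence $\mu_A(s) = s\,\chi(s K_0^{-2}) \in C^\infty(\mathbb{R})$.

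Next, for the bounds in \eqref{eqn:cutoff_pro_a}: since $\chi$ is supported in $\{|x| \le 2\}$, the function $\mu_A(s) = s\,\chi(sK_0^{-2})$ vanishes for $|s| \ge 2K_0^2$, and on $|s| \le 2K_0^2$ we have $|\mu_A(s)| \le |s| \le 2K_0^2 =: C_M$, giving $\|\mu_A(w)\|_{L^\infty} \le C_M$. For the Lipschitz bound, $\mu_A'(s) = \chi(sK_0^{-2}) + sK_0^{-2}\chi'(sK_0^{-2})$, which is continuous and compactly supported, hence bounded by some $C_\mu$; then $|\mu_A(w_1) - \mu_A(w_2)| \le \sup|\mu_A'| \, |w_1 - w_2| = C_\mu|w_1 - w_2|$ by the mean value theorem (applied to the real function $\mu_A$ evaluated at real arguments — note $w_1, w_2$ here should be understood as real inputs to $\mu_A$, or one interprets $|\cdot|$ appropriately).

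For \eqref{eqn:cutoff_pro_b}, I would write $\big|\mu_A(|w_1|^2) - \mu_A(|w_2|^2)\big| \le C_\mu \big| |w_1|^2 - |w_2|^2 \big| = C_\mu \big| |w_1| - |w_2| \big| \cdot \big( |w_1| + |w_2| \big)$ using the Lipschitz property just proved. The subtlety is that $|w_1| + |w_2|$ is not a priori bounded; however, the key observation is that $\mu_A(|w_j|^2) - \mu_A(|w_k|^2) = 0$ whenever \emph{both} $|w_1|^2$ and $|w_2|^2$ exceed $2K_0^2$, and when at least one, say $|w_2|^2 \le 2K_0^2$, one can use the support structure: if $|w_1|^2 \ge 4K_0^2$ then $\mu_A(|w_1|^2) = 0$ and the difference equals $|\mu_A(|w_2|^2)| \le C_\mu|w_2|^2$ which can be re-expressed; the cleaner route is to note $s \mapsto \mu_A(|s|^2)$ as a function of the scalar $s \ge 0$ has derivative $2s\,\mu_A'(s^2)$, which is again continuous and compactly supported in $s$ (supported in $s^2 \le 2K_0^2$), hence bounded, so $s \mapsto \mu_A(s^2)$ is globally Lipschitz in $s$ with some constant, and then $\big|\mu_A(|w_1|^2) - \mu_A(|w_2|^2)\big| \le C \big| |w_1| - |w_2| \big| \le C|w_1 - w_2|$ by the reverse triangle inequality. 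Absorbing constants into $C_\mu$ completes the proof. I expect the main (minor) obstacle to be the bookkeeping in \eqref{eqn:cutoff_pro_b} — making sure the potentially unbounded factor $|w_1| + |w_2|$ is controlled via the compact support of the relevant derivative rather than naively bounded — together with clarifying that $C^\infty$-smoothness of $\chi$ rests on the standard fact that all derivatives of $\exp(1/((|x|-1)^2-1))$ vanish as $(|x|-1)^2 - 1 \to 0^+$.
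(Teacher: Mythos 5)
The paper itself offers no proof here (it simply asserts the lemma follows from ``elementary properties of functions''), so your detailed verification is already more than the paper provides. Your arguments for the bounds \eqref{eqn:cutoff_pro_a}--\eqref{eqn:cutoff_pro_b} are correct: the $L^\infty$ bound follows from the compact support of $\chi$, the Lipschitz bound from the boundedness of the compactly supported derivative $\mu_A'(s)=\chi(sK_0^{-2})+sK_0^{-2}\chi'(sK_0^{-2})$, and — most importantly — you correctly identify the real subtlety in \eqref{eqn:cutoff_pro_b}, namely that the factor $|w_1|+|w_2|$ in $\bigl||w_1|^2-|w_2|^2\bigr|$ is unbounded, and you resolve it properly by observing that $s\mapsto\mu_A(s^2)$ has derivative $2s\,\mu_A'(s^2)$, which is continuous and compactly supported, hence bounded, giving a global Lipschitz bound in $s$ that combines with the reverse triangle inequality. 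That is exactly the right mechanism, and it is the part of the lemma that the rest of the paper actually uses.

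There is, however, a genuine gap in your smoothness argument at $|x|=1$. You claim that ``inductively all derivatives of the exponent vanish at $|x|=1$'' because $\frac{d}{dx}\bigl[(|x|-1)^2-1\bigr]^{-1}$ carries a factor $(|x|-1)$. Only the \emph{first} derivative vanishes for that reason. Writing $u=|x|-1$ and $g(u)=1+(u^2-1)^{-1}$, one computes $g''(u)=-2(u^2-1)^{-2}+8u^2(u^2-1)^{-3}$, so $g''(0)=-2\neq 0$ and hence $\chi''(1^+)=\bigl(g''(0)+g'(0)^2\bigr)e^{g(0)}=-2$, while the constant piece on $[0,1)$ has vanishing second derivative. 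Thus the $\chi$ of \eqref{eqn:cutoffx} is $C^1$ but \emph{not} $C^2$ across $|x|=1$, and the $C^\infty$ assertion of the lemma is not literally true for this cutoff (this is a defect of the paper's construction as much as of your argument; a standard quotient-of-bumps construction would fix it, and only infinite-order flatness at $|x|=2$ is delivered by the given formula). There is also a sign slip in your analysis at $|x|\to 2^-$: there $(|x|-1)^2-1\to 0$ through \emph{negative} values, which is precisely why $1/((|x|-1)^2-1)\to-\infty$ and the exponential piece glues flatly to $0$; as you wrote it ($0^+$ through positive values) the exponent would tend to $+\infty$ and the argument would collapse. Neither issue affects the estimates \eqref{eqn:cutoff_pro}, which require only Lipschitz regularity and which you have proved correctly, but the $C^\infty$ claim as you argue it does not hold.
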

\begin{proof}
 Based on the simple elementary properties of functions, we can directly obtain the conclusion of this lemma.  
\end{proof}

\begin{rem}
Let $r=s\cdot K_0^{-2}$ in \eqref{eqn:cutoff}, then 
	\begin{align*}
	\mu_A=K_0^2r\cdot \chi(r)=:K_0^2\,\hat{\mu}_A(r).	
	\end{align*}
To study the properties \eqref{eqn:cutoff_pro} of the 
function $\mu_A(s)$, we only need to consider the function $\hat{\mu}_A(r)$.  For giving a vivid description, we plot the figures of $\hat{\mu}_A(r)$, $d\hat{\mu}_A(r)/dr$ and $d^2\hat{\mu}_A(r)/dr^2$ in Figure \ref{figure:1}. As depicted in Figure \ref{figure:1}, we can clearly observe the boundedness of the function $\hat{\mu}_A(r)$ and its first-order and second-order derivatives. 

\begin{figure}[!htp]
\centering
\includegraphics[width=0.32\textwidth]{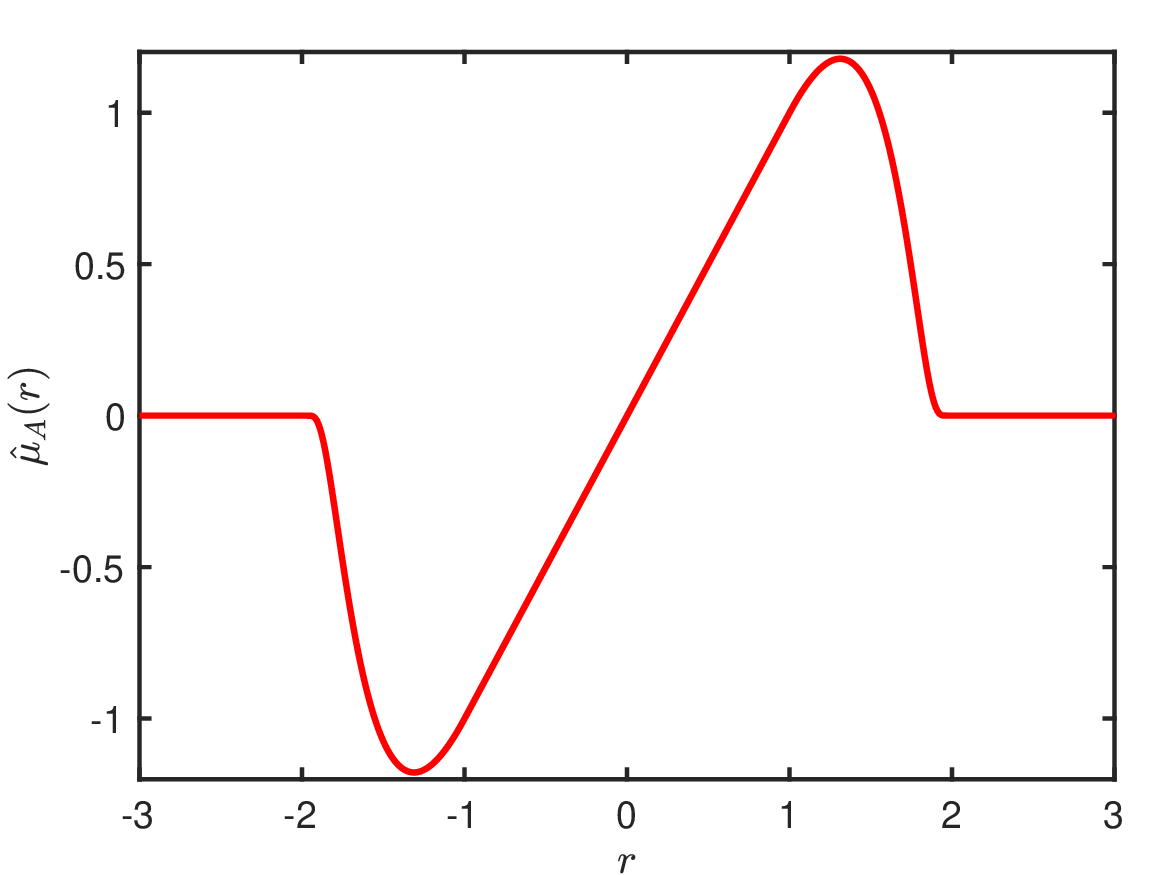}
\includegraphics[width=0.32\textwidth]{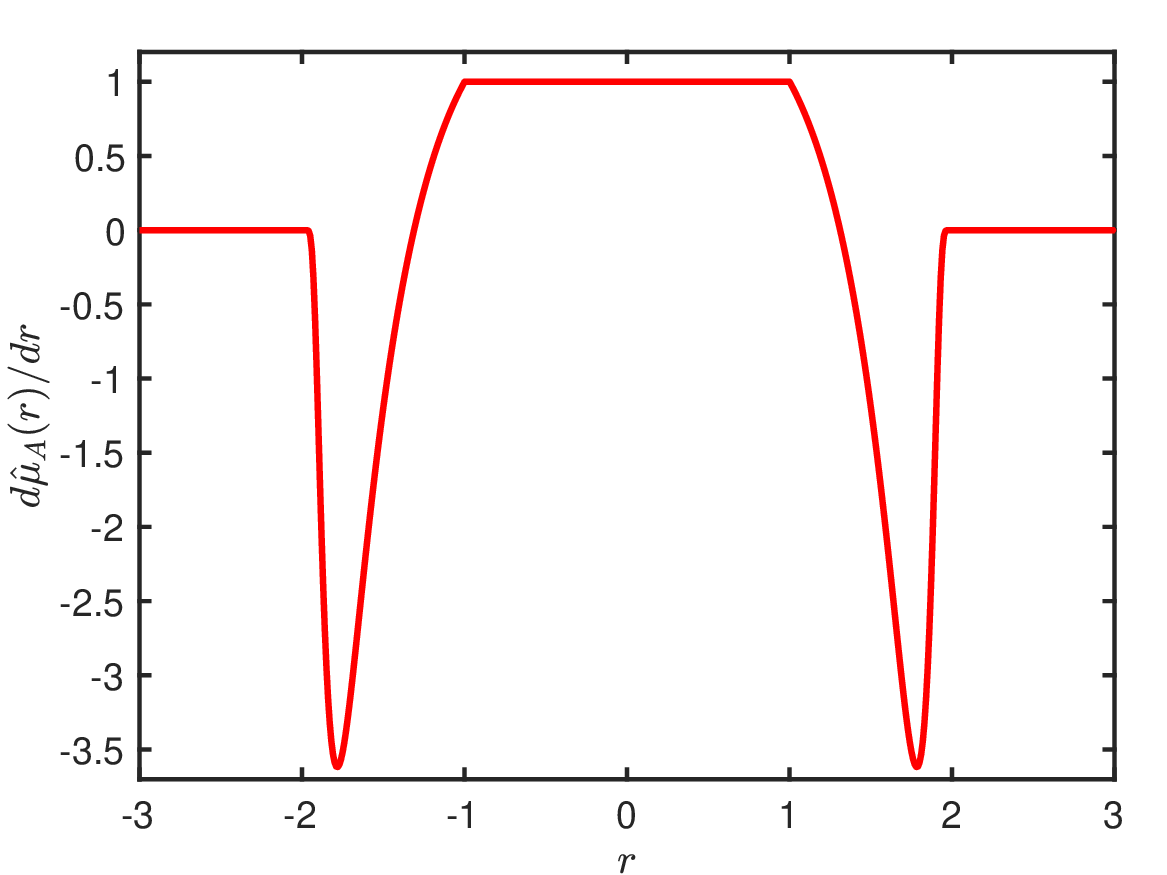}
\includegraphics[width=0.32\textwidth]{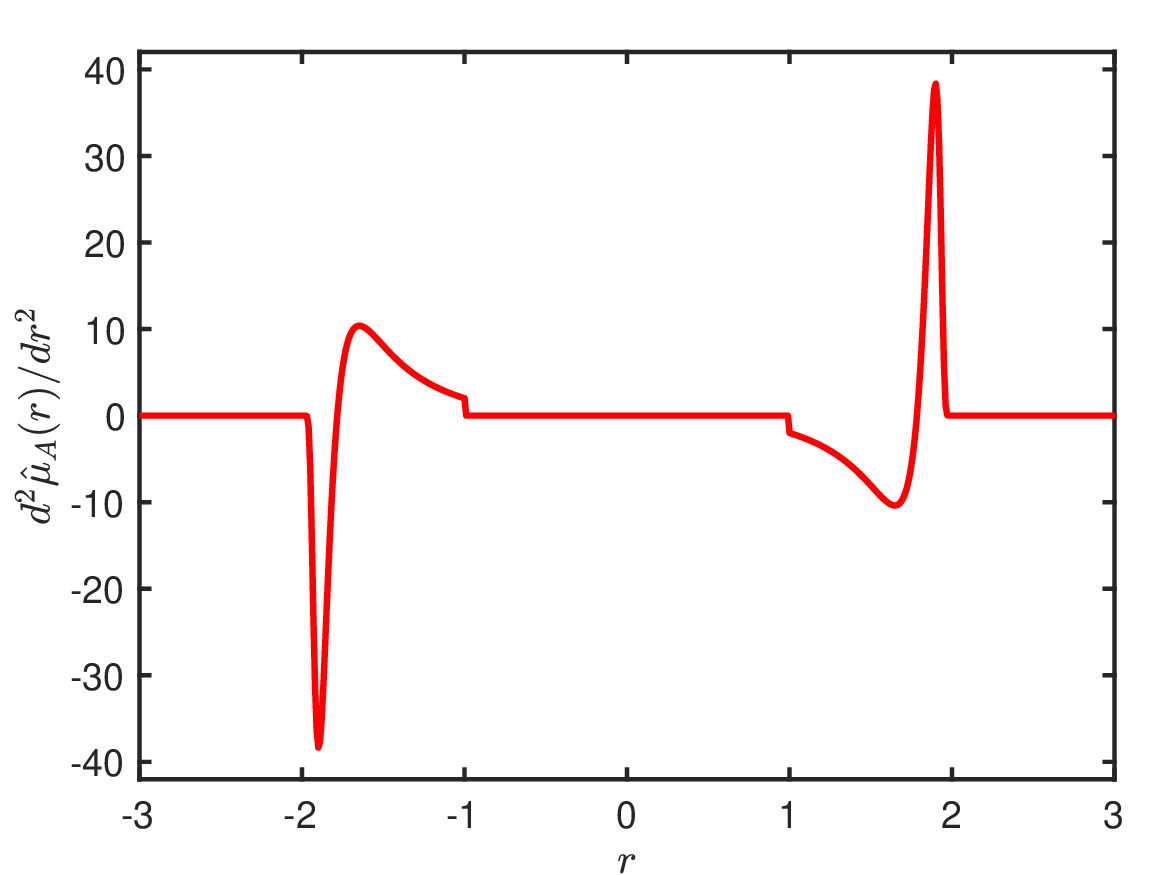}
\caption{Figures of the functions $\hat{\mu}_A(r)$, $d\hat{\mu}_A(r)/dr$ and $d^2\hat{\mu}_A(r)/dr^2$}
\label{figure:1}
\end{figure}

\end{rem}

By using the truncated function $\mu_A(\cdot)$, we introduce the following truncated time-discrete scheme.\begin{myDef} (Time-discrete method with truncation) Let $u_\tau^{T, 0}:=u^0$. Then for 
$n\geq 1$, we define the truncated time-discrete method $u^{T, n+1}_\tau\in H_0^1(U)$ as the solution to 
	\begin{align}\label{eqn:timediscrete_truncate}
		iD_\tau u^{T, n+\frac{1}{2}}_\tau=-\frac{1}{2}\Delta \hat u_\tau^{T, n+\frac{1}{2}}+V\hat u_\tau^{T, n+\frac{1}{2}}-\Omega L_z\hat u_\tau^{T, n+\frac{1}{2}}
+\beta\frac{\mu_A(|u_\tau^{T, n}|^2)+\mu_A(|u_\tau^{T, n+1}|^2)}{2}	\hat u_\tau^{T, n+\frac{1}{2}}. 
	\end{align}
\end{myDef}

\subsubsection{Existence of truncated time-discrete method}

To begin our investigation into the properties of solutions to the truncated system  \eqref{eqn:timediscrete_truncate}, it is crucial to demonstrate that there is at least one solution. To accomplish this, we can rely on the following Brouwer fixed point theorem. 
\begin{lemma}\label{lem:Brouwer} (Brouwer Fixed Point Theorem)
Denote $\overline{S_1(0)} = \{\boldsymbol{\alpha}\in \mathbb C^N; |\boldsymbol{\alpha}|\leq 1\}$ as an unit disk in $\mathbb C^N$. Then, every continuous function $g:\mathbb C^N\rightarrow \mathbb C^N$ with $Re(g(\boldsymbol\alpha),  \boldsymbol\alpha)\geq 0$ with $\boldsymbol\alpha\in \partial S_1(0)$ has a zero in $\overline{S_1(0)}$, i.e. a point $\boldsymbol{\alpha}_0\in \overline{S_1(0)}$ such that $g(\boldsymbol{\alpha}_0)=0$. 
\end{lemma}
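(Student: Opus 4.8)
The plan is to derive this statement as a corollary of the classical Brouwer fixed point theorem for continuous self-maps of a closed Euclidean ball, which I take as the known black box. The first step is a bookkeeping observation: identifying $\mathbb{C}^N$ with $\mathbb{R}^{2N}$ as a real vector space, the complex modulus $|\boldsymbol\alpha|$ coincides with the Euclidean norm on $\mathbb{R}^{2N}$, so the complex unit disk $\overline{S_1(0)}$ is exactly the closed Euclidean unit ball, a nonempty compact convex set. Moreover, under this identification $Re(\boldsymbol w, \boldsymbol\alpha)$ equals the standard real inner product $\langle \boldsymbol w, \boldsymbol\alpha\rangle_{\mathbb{R}^{2N}}$, independent of whichever Hermitian convention one adopts (since $Re\sum_i w_i\overline{\alpha_i}=Re\sum_i \overline{w_i}\alpha_i$). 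Thus the hypothesis $Re(g(\boldsymbol\alpha),\boldsymbol\alpha)\ge 0$ on $\partial S_1(0)$ is a genuine geometric ``outward-pointing'' (acute-angle) condition in $\mathbb{R}^{2N}$.

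The argument itself is by contradiction. Suppose $g$ has no zero in $\overline{S_1(0)}$, so that $|g(\boldsymbol\alpha)|>0$ throughout the ball. Then
\[
F(\boldsymbol\alpha):=-\frac{g(\boldsymbol\alpha)}{|g(\boldsymbol\alpha)|}
\]
is well-defined and continuous on $\overline{S_1(0)}$, and $|F(\boldsymbol\alpha)|=1$, so $F$ maps $\overline{S_1(0)}$ continuously into itself. By the classical Brouwer theorem $F$ admits a fixed point $\boldsymbol\alpha_*\in\overline{S_1(0)}$, i.e.\ $F(\boldsymbol\alpha_*)=\boldsymbol\alpha_*$. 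Since $|\boldsymbol\alpha_*|=|F(\boldsymbol\alpha_*)|=1$, the point $\boldsymbol\alpha_*$ lies on the boundary $\partial S_1(0)$, precisely where the hypothesis applies. It then remains to read off the contradiction: from $\boldsymbol\alpha_*=-g(\boldsymbol\alpha_*)/|g(\boldsymbol\alpha_*)|$ we get $g(\boldsymbol\alpha_*)=-|g(\boldsymbol\alpha_*)|\,\boldsymbol\alpha_*$, hence
\[
Re\bigl(g(\boldsymbol\alpha_*),\boldsymbol\alpha_*\bigr)=-|g(\boldsymbol\alpha_*)|\,|\boldsymbol\alpha_*|^2=-|g(\boldsymbol\alpha_*)|<0,
\]
which contradicts $Re(g(\boldsymbol\alpha_*),\boldsymbol\alpha_*)\ge 0$. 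Therefore the assumption fails and $g$ vanishes at some $\boldsymbol\alpha_0\in\overline{S_1(0)}$.

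The only genuinely delicate point is that everything hinges on importing Brouwer's theorem in the correct (real, $2N$-dimensional) setting; once the identification $\mathbb{C}^N\cong\mathbb{R}^{2N}$ and the norm/inner-product correspondence are in place, the proof is routine. An alternative packaging, should one prefer to avoid the normalization map, is to invoke the no-retraction theorem or a degree/homotopy argument: the condition $Re(g(\boldsymbol\alpha),\boldsymbol\alpha)\ge 0$ prevents the restriction of $g$ to the sphere from being homotopic through nonvanishing fields to the antipodal map, forcing a zero inside. But the contradiction-via-fixed-point route above is the shortest and most self-contained, so that is the one I would carry out.
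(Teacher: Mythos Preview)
Your proof is correct and is the standard derivation of this ``acute angle'' corollary from the classical Brouwer fixed point theorem. Note, however, that the paper does not actually prove this lemma: it is stated as a known black-box result (a well-known variant used in Galerkin existence arguments) and invoked without proof, so there is no paper proof to compare against. Your argument is exactly the kind of justification one would supply if asked to fill in this gap.
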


In addition, we will utilize the following lemma, which corresponds to a specific scenario of the Vitali convergence theorem \cite{leoni2017first}. 
\begin{lemma}\label{lem:Vit}
(Vitali Convergence Theorem)
A sequence $(g_k)_{k\in \mathbb N}\subset L^2(U)$ convergences strongly to the function $g\in L^2(U)$ if and only if 
\begin{itemize}
\item [(1)] $(g_k)_{k\in \mathbb N}$	 locally converges to $g$ in measure;
\item [(2)] $(g_k)_{k\in \mathbb N}$ is $2$-equi-integrable, which means that for every $\epsilon>0$ there exists a $\delta_\epsilon>0$ such that $\|g_k\|_{L^2(S)}<\epsilon$ for any measurable subset $S\subset U$ with the measure 
$\mu(S)\leq \delta_\epsilon$.
\end{itemize}
	
\end{lemma}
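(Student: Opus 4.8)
Since $U$ here is a bounded domain, so that $\mu(U)<\infty$, local convergence in measure and global convergence in measure coincide, and I will freely use the latter (for a general $U$ one would additionally need a tightness hypothesis, which is vacuous in our case). The plan is to treat the two implications separately: the forward direction rests on Chebyshev's inequality together with absolute continuity of the Lebesgue integral, while the converse is handled by splitting $U$ into a set where $|g_k-g|$ is pointwise small and a set of vanishing measure on which the $2$-equi-integrability does the work, with Fatou's lemma used to transport the equi-integrability bound to the limit $g$.

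For ``$\Rightarrow$'', assume $g_k\to g$ in $L^2(U)$. Convergence in measure is immediate from $\mu(\{|g_k-g|>\lambda\})\le\lambda^{-2}\|g_k-g\|_{L^2}^2\to0$ for every $\lambda>0$. For $2$-equi-integrability, fix $\epsilon>0$ and choose $k_0$ with $\|g_k-g\|_{L^2}<\epsilon/2$ for $k\ge k_0$; since each of $|g|^2,|g_1|^2,\dots,|g_{k_0-1}|^2$ is integrable, absolute continuity of the integral supplies $\delta_\epsilon>0$ so that $\mu(S)\le\delta_\epsilon$ forces $\|g\|_{L^2(S)}<\epsilon/2$ and $\|g_j\|_{L^2(S)}<\epsilon$ for $j<k_0$. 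Then for $k\ge k_0$ one has $\|g_k\|_{L^2(S)}\le\|g_k-g\|_{L^2}+\|g\|_{L^2(S)}<\epsilon$, which together with the bound for $j<k_0$ completes this direction.

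For ``$\Leftarrow$'', assume (1) and (2). First I would record that $g$ inherits the equi-integrability bound: extract from $(g_k)$ a subsequence converging a.e.\ (possible since $g_k\to g$ in measure), and apply Fatou's lemma on an arbitrary measurable $S$ to get $\|g\|_{L^2(S)}\le\liminf_j\|g_{k_j}\|_{L^2(S)}$, so $\|g\|_{L^2(S)}\le\epsilon$ whenever $\mu(S)\le\delta_\epsilon$. Now fix $\epsilon>0$ with its $\delta_\epsilon$, pick a parameter $\eta>0$, and split $U=A_k\cup B_k$ with $A_k=\{|g_k-g|\le\eta\}$ and $B_k=\{|g_k-g|>\eta\}$. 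On $A_k$ one has $\|g_k-g\|_{L^2(A_k)}^2\le\eta^2\mu(U)$; by (1), $\mu(B_k)\to0$, so for $k$ large $\mu(B_k)\le\delta_\epsilon$ and hence $\|g_k-g\|_{L^2(B_k)}\le\|g_k\|_{L^2(B_k)}+\|g\|_{L^2(B_k)}<2\epsilon$. Combining the two pieces gives $\limsup_k\|g_k-g\|_{L^2}\le\eta\sqrt{\mu(U)}+2\epsilon$, and letting $\eta\to0$ and then $\epsilon\to0$ yields $g_k\to g$ strongly in $L^2(U)$.

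The only genuinely delicate point is the control on the bad set $B_k$ in the converse: the pointwise difference is not small there, so the estimate must rely entirely on $\mu(B_k)\to0$ combined with the $L^2$-equi-integrability of both $(g_k)$ and the limit $g$; the passage to an a.e.\ convergent subsequence is merely the device that moves equi-integrability onto $g$ via Fatou. Everything else reduces to routine applications of Chebyshev's inequality, absolute continuity of the integral, and the finiteness of $\mu(U)$.
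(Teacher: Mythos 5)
Your proof is correct. Note, however, that the paper does not prove this lemma at all: it is stated as a quoted special case of the Vitali convergence theorem with a citation to Leoni's book, so there is no in-paper argument to compare against. Your self-contained proof is the standard one — Chebyshev plus absolute continuity of the integral for the forward implication, and for the converse the splitting of $U$ into $\{|g_k-g|\le\eta\}$ and its complement of vanishing measure, with Fatou's lemma (via an a.e.\ convergent subsequence extracted from convergence in measure) used to transfer the equi-integrability bound to the limit $g$. All steps check out: the reduction of local to global convergence in measure is legitimate since $U$ is a bounded rectangle, the Fatou step is valid because $|g_{k_j}|^2\to|g|^2$ a.e.\ along the subsequence, and the final estimate $\|g_k-g\|_{L^2}\le\bigl(\eta^2\mu(U)+4\epsilon^2\bigr)^{1/2}\le\eta\sqrt{\mu(U)}+2\epsilon$ closes the argument. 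The only cosmetic remark is that in the converse direction one also wants $\sup_k\|g_k\|_{L^2(U)}<\infty$ (obtained by partitioning $U$ into finitely many pieces of measure at most $\delta_\epsilon$) if one wishes to derive $g\in L^2(U)$ rather than assume it, but the lemma's statement already places $g$ in $L^2(U)$, so nothing is missing.
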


We  prove 
the truncated time-discrete system 
\eqref{eqn:timediscrete_truncate} has at least one solution in $H_0^1(U)$. 
The system \eqref{eqn:timediscrete_truncate} is indeed equivalent to  
\begin{align}
\hat u_\tau^{T, n+\frac{1}{2}}- u_\tau^{T, n}	
-\frac{\Omega\tau}{2}
\left[x\partial_y-y\partial_x\right]\hat u_\tau^{T, n+\frac{1}{2}}
+\tau i\Phi(\hat u_\tau^{T, n+\frac{1}{2}})=0,
\end{align}
where $\Phi(\hat u_\tau^{T, n+\frac{1}{2}})=\Phi(w)|_{w=\hat u_\tau^{T, n+\frac{1}{2}}}$ with 
\begin{align*}
	\Phi(w):=
	-\frac{1}{4}\Delta w+\frac{1}{2}Vw
	+\frac{\beta}{4}
	\left[\mu_A(|u_\tau^{T,n}|^2)+\mu_A(|2w-u_\tau^{T,n}|^2)\right]w.
\end{align*}

\begin{lemma}\label{lem:ex_trun_timediscrete}
	There exists at least one solution $u_\tau^{M, n}\in H_0^1(U)$, $n\geq 1$ for the truncated system \eqref{eqn:timediscrete_truncate}. 
\end{lemma}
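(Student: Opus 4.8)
\textbf{Proof proposal for Lemma \ref{lem:ex_trun_timediscrete}.}

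The plan is to reformulate the problem of finding $u_\tau^{T,n+1}$ as a fixed-point problem for the unknown $w = \hat u_\tau^{T,n+\frac12}$ and apply the Brouwer fixed point theorem (Lemma \ref{lem:Brouwer}) in its zero-finding form. Given $u_\tau^{T,n}\in H_0^1(U)$ from the previous step (with $u_\tau^{T,0}=u^0$), I would define the map $g:H_0^1(U)\to H_0^1(U)$, or rather work on a finite-dimensional Galerkin subspace to stay literally within the hypotheses of Lemma \ref{lem:Brouwer}, by
\begin{align*}
g(w) := w - u_\tau^{T,n} - \frac{\Omega\tau}{2}\left[x\partial_y - y\partial_x\right]w + \tau i\,\Phi(w),
\end{align*}
so that a zero of $g$ is exactly a solution of the truncated scheme with $u_\tau^{T,n+1} = 2w - u_\tau^{T,n}$. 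First I would set up a Galerkin basis $\{\phi_1,\dots,\phi_K\}$ of a finite-dimensional subspace $X_K\subset H_0^1(U)$, identify $w=\sum\alpha_j\phi_j$ with $\boldsymbol\alpha\in\mathbb C^K$, and define the corresponding $g_K:\mathbb C^K\to\mathbb C^K$ componentwise via the weak form. Continuity of $g_K$ follows from continuity of $\Phi$, which in turn rests on the Lipschitz bound \eqref{eqn:cutoff_pro_b} for $\mu_A(|\cdot|^2)$ together with the uniform bound $\|\mu_A(\cdot)\|_{L^\infty}\le C_M$ from \eqref{eqn:cutoff_pro_a}: the cutoff makes the nonlinear term a bounded, continuous perturbation, which is precisely why the truncation was introduced.

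The key estimate is the coercivity-type inequality $\mathrm{Re}\,(g(w),w)\ge 0$ for $\|w\|$ large. Testing $g(w)$ against $w$ in the $L^2$ inner product, the rotation term contributes $-\frac{\Omega\tau}{2}\mathrm{Re}([x\partial_y-y\partial_x]w,w)=0$ because $L_z$ is self-adjoint and $\mathrm{Re}(L_z w,w)$ vanishes (this is the identity $\mathrm{Re}([x\partial_y-y\partial_x]u,u)=0$ already used in the proof of Theorem \ref{thm-timediscrete-conservation}); the term $\tau i\,\Phi(w)$ tested against $w$ has real part $\tau\,\mathrm{Re}\,(i\Phi(w),w) = -\tau\,\mathrm{Im}(\Phi(w),w)$, and inside $\Phi$ the Laplacian gives $-\frac14\mathrm{Im}(\Delta w,w)=\frac14\mathrm{Im}\|\nabla w\|^2=0$, the potential gives $\frac12\mathrm{Im}(Vw,w)=0$, and the cutoff nonlinearity similarly contributes zero imaginary part since $\mu_A(|u_\tau^{T,n}|^2)+\mu_A(|2w-u_\tau^{T,n}|^2)$ is real-valued — wait, that last claim needs care since the argument $|2w-u_\tau^{T,n}|^2$ depends on $w$, but the coefficient multiplying $w$ is still a real function, so its contribution to $\mathrm{Im}(\cdot,w)$ against $w$ vanishes pointwise. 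Hence $\mathrm{Re}(g(w),w) = \|w\|_{L^2}^2 - \mathrm{Re}(u_\tau^{T,n},w) \ge \|w\|_{L^2}^2 - \|u_\tau^{T,n}\|_{L^2}\|w\|_{L^2}$, which is nonnegative once $\|w\|_{L^2}\ge \|u_\tau^{T,n}\|_{L^2}$. After rescaling to the unit ball (replacing $w$ by $\rho\boldsymbol\alpha$ with $\rho = \|u_\tau^{T,n}\|_{L^2}+1$), Lemma \ref{lem:Brouwer} yields a zero $\boldsymbol\alpha_0$, hence a Galerkin solution $w_K\in X_K$ with $\|w_K\|_{L^2}$ bounded independently of $K$.

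Finally I would pass to the limit $K\to\infty$. Testing the Galerkin equation against $w_K$ itself and using the coercivity computation also controls $\|\nabla w_K\|_{L^2}$ uniformly (the $-\frac14\mathrm{Im}(\Delta w_K,w_K)$ term vanishes, so one instead tests against, e.g., $\Delta w_K$ or uses the equation directly to bound $\Delta w_K$ in $H^{-1}$ and then elliptic regularity; more simply, the reformulated equation reads $w_K = u_\tau^{T,n} + \frac{\Omega\tau}{2}[x\partial_y-y\partial_x]w_K - \tau i\Phi(w_K)$ and a standard a priori $H^1$ bound follows). Extract a weakly convergent subsequence $w_K\rightharpoonup w$ in $H_0^1(U)$; by Rellich the convergence is strong in $L^2$ and a.e. (along a further subsequence), so $2w_K - u_\tau^{T,n}\to 2w-u_\tau^{T,n}$ a.e., and then the Vitali convergence theorem (Lemma \ref{lem:Vit}) — with $2$-equi-integrability supplied by the uniform $L^\infty$ bound $\|\mu_A(|2w_K-u_\tau^{T,n}|^2)\|_{L^\infty}\le C_M$ and the $L^2$ bound on $w_K$ — shows the nonlinear term converges strongly in $L^2$. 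Passing to the limit in the weak form identifies $w$ as a solution, and $u_\tau^{T,n+1}:=2w-u_\tau^{T,n}\in H_0^1(U)$; induction on $n$ completes the argument. The main obstacle is the convergence of the nonlinear term: unlike a globally Lipschitz setting one must combine the a.e. convergence of the argument with the uniform $L^\infty$ bound from the cutoff through Vitali's theorem to get strong $L^2$ convergence of $\mu_A(|2w_K-u_\tau^{T,n}|^2)\,\hat u_{K}$ — this is exactly the step where the excerpt pre-positioned Lemma \ref{lem:Vit}.
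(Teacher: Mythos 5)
Your proposal is correct and follows essentially the same route as the paper: reformulating the scheme as a zero-finding problem for $\mathcal G(w)=w-u_\tau^{T,n}-\frac{\Omega\tau}{2}[x\partial_y-y\partial_x]w+\tau i\Phi(w)$, applying Brouwer's theorem on a finite-dimensional Galerkin subspace via the identity $\mathrm{Re}(\mathcal G(w),w)=\|w\|_{L^2}^2-\mathrm{Re}(u_\tau^{T,n},w)$ (with the rotation and nonlinear contributions vanishing exactly as in the paper), and passing to the limit with Vitali's theorem handling the truncated nonlinearity. The only minor divergence is that the paper derives an $H^2$ a priori bound on the Galerkin approximations (by invoking the later error analysis) before the limit passage, whereas you work with an $H^1$ bound plus Rellich compactness, which suffices for the same conclusion.
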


\begin{proof}
We only need to prove the existence of the solution to 
\begin{align}\label{eqn:t_trun_ex_pf1} 
	\mathcal G(w) := w - u_\tau^{T, n}	
-\frac{\Omega\tau}{2}
\left[x\partial_y-y\partial_x\right]w
+\tau i\Phi(w)=0,\quad w\in H_0^1(U).
\end{align}	
The proof will be built in the following three steps. \\
\noindent \textbf{Step 1.} We first show the existence in a finite-dimensional subspace 
\begin{align}\label{eqn:t_trun_ex_pf2}
\mathscr X_N:=\left\{\phi_m; m\in \mathbb N	\right\}, 
\end{align}
which denotes a countable basis of the space $H_0^1(U)$. For given $u_{h,\tau}^{T, n}\in H_0^1(U)$, we want to check whether there exists $X_N\in \mathscr X_N$, such that 
\begin{align}\label{eqn:t_trun_ex_pf3}
	\mathcal G(X_N) = X_N - u_\tau^{T, n}	
-\frac{\Omega\tau}{2}
\left[x\partial_y-y\partial_x\right]X_N
+\tau i\Phi(X_N)=0.
\end{align}
Taking the inner product of \eqref{eqn:t_trun_ex_pf3} with $X_N$ gives that 
\begin{align}\label{eqn:t_trun_ex_pf4}
\left(\mathcal G(X_N), 	X_N\right)
=\|X_N\|_{L^2}^2-(u_\tau^{T, n}, X_N)
-\frac{\Omega\tau}{2}
\left(\left[x\partial_y-y\partial_x\right]X_N, X_N\right)
+\tau i(\Phi(X_N), X_N)=0.
\end{align}
It is obvious that 
\begin{align}\label{eqn:t_trun_ex_pf5}
	Im(\Phi(X_N), X_N)=0.
\end{align}
Additionally, since 
\begin{align}\label{eqn:t_trun_ex_pf6}
\left(\left[x\partial_y-y\partial_x\right]X_N, X_N\right)=
-\left(X_N, \left[x\partial_y-y\partial_x\right]X_N\right), 
\end{align}	
we have 
\begin{align}\label{eqn:t_trun_ex_pf7}
Re\left(\left[x\partial_y-y\partial_x\right]X_N, X_N\right)=0.	
\end{align}
Extracting the real part of \eqref{eqn:t_trun_ex_pf4}, and using 
 \eqref{eqn:t_trun_ex_pf5} and \eqref{eqn:t_trun_ex_pf7}, we get 
\begin{align}\label{eqn:t_trun_ex_pf8}
	Re\left(\mathcal G(X_N), 	X_N\right)
	=\|X_N\|_{L^2}^2-Re(u_\tau^{T, n}, X_N)\geq \|X_N\|_{L^2}^2-\|u_\tau^{T, n}\|_{L^2}\|X_N\|_{L^2}
	=\left(\|X_N\|_{L^2}-\|u_\tau^{T, 0}\|_{L^2}\right)\|X_N\|_{L^2},
\end{align}
where the last equality is due to the mass conservation that can be derived similar as \eqref{thm-timediscrete-conservation}. Take sufficiently large $\|X_N\|_{L^2}$ such $Re\left(\mathcal G(X_N), 	X_N\right)\geq 0$, then 
the existence of the solutions to \eqref{eqn:t_trun_ex_pf3}, provided that $u_\tau^{T, n}$ exists. 
Therefore, the solution $Z_N=2X_N-u_\tau^{T, n}$ exists, and $Z_N\in \mathscr X_N$ is indeed the solution to 
\begin{align}\label{eqn:timediscrete_countable}
		i\frac{Z_N-u_\tau^{T, n}}{\tau_n}=-\frac{\Delta Z_N+\Delta u_\tau^{T, n}}{4}+V\frac{Z_N+u_\tau^{T, n}}{2}-\Omega \frac{L_z Z_N+L_z u_\tau^{T, n}}{2}
+\beta\frac{\mu_A(|u_\tau^{T, n}|^2)+\mu_A(|Z_N|^2)}{2}	\frac{Z_N+u_\tau^{T, n}}{2}. 
	\end{align}

\noindent \textbf{Step 2.} We build the estimate of 
\eqref{eqn:timediscrete_countable} and further demonstrate the boundedness of $Z_N$ under the $H^2$-norm. The corresponding results are shown as follows: 
\begin{align}\label{eqn:t_trun_ex_pf9}
	\left\|Z_N-u_\tau^{T, n+1}\right\|_{H^1}+\tau^2\left\|Z_N-u_\tau^{T, n+1}\right\|_{H^2}\leq C_Z\tau^2, 
\end{align}
where $C_Z>0$ is a constant independent of $\tau$. This implies the boundedness of $Z_N$ in the $H^2$-norm, which, in turn, guarantees its boundedness in the $L^\infty$-norm by employing the Sobolev embedding inequality. The estimate \eqref{eqn:t_trun_ex_pf9} follows a similar approach as the error analysis of the time-discrete system in the next subsection, obviating the need to present its proof here.

\noindent \textbf{Step 3.} 
Thanks to the boundedness of of $Z_N$ in $H^2$-norm, 
and applying the Vitali’s theorem shown in Lemma \ref{lem:Vit}, we can derive iteratively the existence of a solution $u_\tau^{T, n+1}\in H_0^1(U)$ to the truncated system \eqref{eqn:timediscrete_truncate}. Here, the nonlinear part in \eqref{eqn:t_trun_ex_pf9} strongly converges to the corresponding nonlinear term in \eqref{eqn:timediscrete_truncate} using Vitali’s theorem, as demonstrated in a similar proof in \cite{henning2017crank}.
\end{proof}

\subsubsection{Uniform $L^\infty$-boundedness of the truncated approximation}

Define 
$e_\tau^{T, n}=u^n-u_\tau^{T, n}$ for $n\geq 0$. Then, from \eqref{eqn:timediscrete_truncate} and \eqref{eqn:model}, we have 
\begin{align}\label{eqn:uniform_pf1}
iD_\tau e_\tau^{T, n+\frac{1}{2}} = -\frac{1}{2}\Delta \hat e_\tau^{T, n+\frac{1}{2}}+V\hat e_\tau^{T, n+\frac{1}{2}}-\Omega L_z\hat e_\tau^{T, n+\frac{1}{2}}
+\mathcal N_\tau^{T, n+\frac{1}{2}}+\mathcal R_\tau^{T, n+\frac{1}{2}},
\end{align}
where 
\begin{align}\label{eqn:uniform_pf1a1}
\mathcal N_\tau^{T, n+\frac{1}{2}}=\beta\left[\frac{|u^{n}|^2+|u^{n+1}|^2}{2}	\hat u^{n+\frac{1}{2}}	
-\frac{\mu_A(|u_\tau^{T, n}|^2)+\mu_A(|u_\tau^{T, n+1}|^2)}{2}	\hat u_\tau^{T, n+\frac{1}{2}}\right],	
\end{align}
and 
\begin{align}\label{eqn:uniform_pf1a2}
\mathcal R_\tau^{T, n+\frac{1}{2}}=&
i\left[D_\tau u^{n+\frac12}-\partial_tu^{n+\frac12}\right]
+\frac12\left[\Delta\hat u^{n+\frac12}-\Delta u^{n+\frac12}\right]
+\Omega\left[ L_z\hat u^{n+\frac12}-L_z u^{n+\frac12}\right]\nn\\
&+\beta\left[|u^{n+\frac12}|^2u^{n+\frac12}-
\frac{|u^n|^2+|u^{n+1}|^2}{2}\hat u^{n+\frac12}\right]. 
\end{align}
Based on the regularity assumption \eqref{eqn:reg}, we readily derive
\begin{align}\label{eqn:uniform_pf1a3}
|\mathcal R_\tau^{T, n+\frac{1}{2}}|_{H^2}\leq C_R\tau^2,
\end{align}
where $C_R$ is a positive constant independent of $\tau$. Moreover, utilizing the definition of the truncated function in \eqref{eqn:cutoff}, we obtain 
\begin{align}\label{eqn:uniform_pf2}
\mathcal N_\tau^{T, n+\frac{1}{2}}&=\beta\left[\frac{|u^{n}|^2+|u^{n+1}|^2}{2}	\hat u^{n+\frac{1}{2}}	
-\frac{\mu_A(|u_\tau^{T, n}|^2)+\mu_A(|u_\tau^{T, n+1}|^2)}{2}	\hat u_\tau^{T, n+\frac{1}{2}}\right]\nonumber\\
&=\beta\left[\frac{\mu_A(|u^{n}|^2)+\mu_A(|u^{n+1}|^2)}{2}	\hat u^{n+\frac{1}{2}}	
-\frac{\mu_A(|u_\tau^{T, n}|^2)+\mu_A(|u_\tau^{T, n+1}|^2)}{2}	\hat u_\tau^{T, n+\frac{1}{2}}\right]\nn\\
&=
\beta\left[\frac{\mu_A(|u^{n}|^2)+\mu_A(|u^{n+1}|^2)}{2}	
-\frac{\mu_A(|u_\tau^{T, n}|^2)+\mu_A(|u_\tau^{T, n+1}|^2)}{2}\right]\hat u^{n+\frac{1}{2}}	+
\beta\frac{\mu_A(|u_\tau^{T, n}|^2)+\mu_A(|u_\tau^{T, n+1}|^2)}{2}	\hat e^{n+\frac{1}{2}}.
\end{align}
Then, using \eqref{eqn:cutoff_pro} in \eqref{eqn:uniform_pf2} gives 
\begin{align*}
\mathcal N_\tau^{T, n+\frac{1}{2}}\leq 
\frac{|\beta|(K_0-1)}{2}	C_\mu\left(|e_\tau^{T,n}|+|e_\tau^{T,n+1}|\right)
+\frac{|\beta|}{2}C_M\left(|e_\tau^{T,n}|+|e_\tau^{T,n+1}|\right),
\end{align*}
which implies that 
\begin{align}\label{eqn:uniform_pf3}
	\|\mathcal N_\tau^{T, n+\frac{1}{2}}\|_{H^s}
	\leq C_{\mathcal N, \tau}\left(\|e_\tau^{T,n}\|_{H^s}+\|e_\tau^{T,n+1}\|_{H^s}\right),\qquad s=0, 1, 2, 
\end{align}
where $C_{\mathcal N, \tau}$ is a positive constant independent of $h$ and $\tau$.

Taking the inner product of the error equation \eqref{eqn:uniform_pf1} with $\hat e_\tau^{T, n+\frac{1}{2}}$ and $\Delta \hat e_\tau^{T, n+\frac{1}{2}}$ respectively, and selecting the imaginary part of the resulting, we can easily derive 
\begin{align}\label{eqn:uniform_pf4}
\|e_\tau^{T, n+1}\|_{H^1}\leq C_{E, \tau}\tau^2, 
\end{align}
provided that $\tau\leq \tau_1$ with a positive constant $\tau_1$. Then, it follows that 
\begin{align}\label{eqn:uniform_pf5}
\|D_\tau e_\tau^{T, n+\frac{1}{2}}\|_{L^2}\leq 2C_{E, \tau}\tau.
\end{align}
Here $C_{E, \tau}$ denotes a positive constant independent of $\tau$ that will be used herein and hereafter, and may be different in different places.
Then from \eqref{eqn:uniform_pf1}, we get 
$\|\Delta \hat e_\tau^{T, n+\frac{1}{2}}\|_{L^2}\leq C_{E, \tau}\tau$, 
which further implies that 
\begin{align}\label{eqn:uniform_pf6}
	\|\Delta e_\tau^{T, n+\frac{1}{2}}\|_{L^2}\leq C_{E, \tau}.
\end{align}
Therefore, by using \eqref{eqn:uniform_pf4} and \eqref{eqn:uniform_pf6}, we get 
\begin{align}\label{eqn:uniform_pf7}
	\|e_\tau^{T, n+1}\|_{H^2}\leq C_{E, \tau}.
\end{align} 
By virtue of the Gagliardo–Nirenberg inequality, we obtain 
\begin{align}\label{eqn:uniform_pf8}
	\|e_\tau^{T, n+1}\|_{L^\infty}\leq 
	C\|e_\tau^{T, n+1}\|_{H^2}^{\frac{3}{4}}\|e_\tau^{T, n+1}\|_{L^2}^{\frac{1}{4}}+C\|e_\tau^{T, n+1}\|_{L^2}
	\leq CC_{E, \tau}\tau^{\frac{1}{2}}\leq \tau^{\frac{1}{4}}.
\end{align}
From \eqref{eqn:uniform_pf7} and \eqref{eqn:uniform_pf8}, if $\tau\leq 1$, it follows that 
\begin{align}
& \|u_\tau^{T, n+1}\|_{H^2}\leq \|e_\tau^{T, n+1}\|_{H^2}+\|u^{n+1}\|_{H^2}
\leq C_{E, \tau}+C_u\leq C_{u, \tau},\\
& \|u_\tau^{T, n+1}\|_{L^\infty}\leq 
\|e_\tau^{T, n+1}\|_{L^\infty} + 
\|u^{n+1}\|_{L^\infty}\leq K_0,
\end{align}
with a positive constant $C_{u, \tau}$.

The proof for the uniform $L^\infty$-boundedness of the truncated approximation is now concluded. Utilizing the definition of the function $\mu_A(\cdot)$ in \eqref{eqn:cutoff}, we establish the relationship:
\begin{align}
\mu_A(u_\tau^{T, n})= 	u_\tau^{T, n},\quad 
\text{for~all~}n\geq 0.
\end{align}
This implies that the time-truncated system \eqref{eqn:timediscrete_truncate} is indeed equivalent to the time-discrete method \eqref{eqn:timediscrete}. Consequently, the $L^\infty$-norm boundedness of $u_\tau^{n}$ naturally follows.

\subsubsection{Convergence and boundedness of the time-discrete method}
From above analysis, we have derived some theoretical results of the time-discrete method, which are given in the following lemma. 
\begin{lemma}\label{lem:time_error1}
	Assume $u$ be the solution of the model \eqref{eqn:model} satisfying the regularized condition \eqref{eqn:reg}. Then there exists 
	$\tau_0=\min\{\tau_1, 1\}>0$ such that when $\tau\leq \tau_0$, the time-discrete system \eqref{eqn:timediscrete} has a unique solution $u_\tau^n$, $n=1,\ldots,N$, satisfying 
	\begin{align}\label{eqn:time_error1a}
	\sup_{0\leq n\leq N}\|u_\tau^n\|_{H^2}\leq C_{u, \tau},\qquad
	 \sup_{0\leq n\leq N}\|u_\tau^n\|_{L^\infty}\leq K_0,
	\end{align}
and the error estimate
\begin{align}\label{eqn:time_error1b}
\sup_{0\leq n\leq N}\left\|u^n-u_\tau^n\right\|_{H^1}+\tau^2\sup_{0\leq n\leq N}\left\|u^n-u_\tau^n\right\|_{H^2}\leq C_{E, \tau}\tau^2,
\qquad 	
\end{align}
holds true, where $C_{u, \tau}$, $K_0$ and $C_{E, \tau}$ are positive constants independent of $\tau$. 
\end{lemma}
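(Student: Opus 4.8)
The plan is to assemble the conclusions already obtained in the preceding subsubsections into a single clean statement, and then to promote the estimates for the \emph{truncated} scheme \eqref{eqn:timediscrete_truncate} to the \emph{genuine} time-discrete scheme \eqref{eqn:timediscrete}. First I would fix $\tau_0 = \min\{\tau_1, 1\}$, where $\tau_1$ is the threshold from \eqref{eqn:uniform_pf4}, and work throughout with $\tau \leq \tau_0$. Lemma \ref{lem:ex_trun_timediscrete} gives existence of at least one solution $u_\tau^{T,n+1}\in H_0^1(U)$ to the truncated system at each step; the uniform-boundedness analysis of the previous subsubsection (estimates \eqref{eqn:uniform_pf4}--\eqref{eqn:uniform_pf8}) shows that this solution satisfies $\|e_\tau^{T,n+1}\|_{H^1}\le C_{E,\tau}\tau^2$, $\|e_\tau^{T,n+1}\|_{H^2}\le C_{E,\tau}$, and $\|e_\tau^{T,n+1}\|_{L^\infty}\le \tau^{1/4}$, whence $\|u_\tau^{T,n+1}\|_{H^2}\le C_{u,\tau}$ and, crucially, $\|u_\tau^{T,n+1}\|_{L^\infty}\le K_0$.

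The key observation is the one already flagged at the end of that subsubsection: since $K_0 = \sup_M \max_{0\le m\le M}\|u^m\|_{L^\infty} + 1$ (see \eqref{eqn:timebound}) and the truncated iterates obey $\|u_\tau^{T,m}\|_{L^\infty}\le K_0$, we have $|u_\tau^{T,m}|^2 \cdot K_0^{-2} \le 1$ pointwise, so $\chi(|u_\tau^{T,m}|^2 K_0^{-2}) \equiv 1$ and therefore $\mu_A(|u_\tau^{T,m}|^2) = |u_\tau^{T,m}|^2$ for every $m \ge 0$. Consequently the truncation in \eqref{eqn:timediscrete_truncate} is inactive and $u_\tau^{T,n}$ solves the original system \eqref{eqn:timediscrete}. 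This furnishes a solution $u_\tau^n := u_\tau^{T,n}$ to \eqref{eqn:timediscrete} obeying \eqref{eqn:time_error1a} and \eqref{eqn:time_error1b}, the latter being exactly the combination of \eqref{eqn:uniform_pf4} and \eqref{eqn:uniform_pf7} after identifying $e_\tau^n = e_\tau^{T,n}$.

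It remains to prove \emph{uniqueness} of the solution to \eqref{eqn:timediscrete} among all of $H_0^1(U)$ (not merely among those that happen to be $L^\infty$-small). I would argue by contradiction: suppose $u_\tau^{n+1}$ and $v_\tau^{n+1}$ both solve \eqref{eqn:timediscrete} from the same data $u_\tau^n$; set $w = u_\tau^{n+1} - v_\tau^{n+1}$, subtract the two equations, test with $\hat w := (u_\tau^{n+1}+v_\tau^{n+1})/2 - u_\tau^n$ adjusted appropriately (or test the difference equation with $w$ itself), take the imaginary part to kill the $-\tfrac12\Delta$, the $V$, and the $L_z$ contributions (the last using $Re([x\partial_y - y\partial_x]\phi,\phi) = 0$ as in \eqref{eqn-timediscrete-conservation-proof2}--\eqref{eqn-timediscrete-conservation-proof3}), and estimate the cubic nonlinearity by the Lipschitz bound $\bigl||a|^2 a - |b|^2 b\bigr| \le C(\|a\|_{L^\infty}^2 + \|b\|_{L^\infty}^2)|a-b|$ together with the already-established $L^\infty$ bound $K_0$ on the iterates; for $\tau$ small this forces $\|w\|_{L^2} = 0$. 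The main obstacle is making this uniqueness argument genuinely unconditional — i.e. not presupposing that $v_\tau^{n+1}$ is $L^\infty$-bounded. The cleanest route is to observe that the same mass-conservation identity \eqref{eqn-timediscrete-conservation-proof1} and the $H^2$-type a priori estimate \eqref{eqn:uniform_pf7} apply verbatim to \emph{any} $H_0^1$ solution of \eqref{eqn:timediscrete}, so every solution automatically inherits the $K_0$ bound, after which the Lipschitz/Grönwall contraction closes the argument. I would then state that the well-posedness propagates inductively in $n$, completing the proof.
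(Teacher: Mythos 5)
Your proposal follows essentially the same route as the paper: the paper's ``proof'' of this lemma is precisely the assembly you describe --- existence for the truncated scheme via Lemma \ref{lem:ex_trun_timediscrete}, the estimates \eqref{eqn:uniform_pf4}--\eqref{eqn:uniform_pf8} giving $\|e_\tau^{T,n+1}\|_{H^1}\le C\tau^2$, $\|e_\tau^{T,n+1}\|_{H^2}\le C$ and $\|u_\tau^{T,n+1}\|_{L^\infty}\le K_0$, and then the observation that $\mu_A(|u_\tau^{T,n}|^2)=|u_\tau^{T,n}|^2$ so the truncated and untruncated schemes coincide. That part of your write-up is correct and is exactly what the authors do.

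The one place where you go beyond the paper is uniqueness, and there your ``cleanest route'' has a circularity you should not paper over. The $H^2$-type a priori estimate \eqref{eqn:uniform_pf7} does \emph{not} apply verbatim to an arbitrary $H_0^1$ solution of \eqref{eqn:timediscrete}: it is derived for the \emph{truncated} scheme, where the nonlinearity is globally Lipschitz by \eqref{eqn:cutoff_pro}, via the error equation against the exact solution $u^n$. For the untruncated scheme the cubic term $|v|^2v$ is only locally Lipschitz, so you cannot bootstrap an $L^\infty$ bound on a putative second solution $v_\tau^{n+1}$ without already having one --- which is the whole reason the truncation was introduced. Mass conservation does hold for any solution, but an $L^2$ bound does not control $\||a|^2a-|b|^2b\|$ in the way your Gr\"onwall step needs. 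What your argument honestly delivers is uniqueness within the class of solutions obeying the $K_0$ bound (or within a fixed $H^2$-ball), which is also all the paper implicitly uses; the paper itself states uniqueness in the lemma but supplies no argument for it at all. If you want genuinely unconditional uniqueness you would need a separate device (e.g.\ a contraction argument in $H^1$ using the 2D Gagliardo--Nirenberg inequality and the conserved mass, with $\tau$ small), not a transfer of \eqref{eqn:uniform_pf7}.
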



We can also enhance the rate of convergence of $\sup_{0\leq n\leq N}\left\|u_\tau^n-u^n\right\|_{H^2}$ to $O(\tau^2)$. In the next subsection, we will demonstrate the necessity of achieving a higher-order convergence rate.
\begin{lemma}\label{lem:time_error2}
	Under the conditions of Lemma \ref{lem:time_error1}, there holds
\begin{align}\label{eqn:time_error2}
\sup_{0\leq n\leq N}\left\|u^n-u_\tau^n\right\|_{H^2}\leq C_{E, \tau}\tau^2,
\qquad 	
\end{align}
provided $\tau\leq \tau_0^*$ with a positive constant $\tau_0^*$, where 
$C_{E, \tau}$ denotes a positive constant independent of $\tau$. 
\end{lemma}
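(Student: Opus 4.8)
\textbf{Proof proposal for Lemma \ref{lem:time_error2}.}

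The plan is to bootstrap the $H^1$ error estimate \eqref{eqn:time_error1b} of Lemma \ref{lem:time_error1} up to an $H^2$ estimate of the same order $O(\tau^2)$, at the cost of a slightly smaller step-size threshold $\tau_0^*$ and using the extra regularity $u_{ttt}\in L^2((0,T);H^1)$ built into \eqref{eqn:reg}. First I would differentiate the error equation \eqref{eqn:uniform_pf1} (with $\mu_A$ removed, since the truncation is inactive by Lemma \ref{lem:time_error1}) one step further in time: introduce the discrete second difference $D_\tau^2 e_\tau^{n}$, or equivalently write down the equation satisfied by $D_\tau e_\tau^{n+\frac12}$ by subtracting consecutive instances of \eqref{eqn:uniform_pf1}. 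This produces an equation of the same Schr\"odinger type for $D_\tau e_\tau^{n+\frac12}$ with a right-hand side consisting of (i) a consistency remainder now controlled by $\|u_{ttt}\|_{L^2(H^1)}$, hence of order $\tau^2$ in the $H^1$-type norm, and (ii) differences of the nonlinear and rotation terms, which are handled using the already-established bounds \eqref{eqn:time_error1a}, \eqref{eqn:time_error1b}, \eqref{eqn:uniform_pf3} and the Lipschitz estimates \eqref{eqn:cutoff_pro}.

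Next I would run the same two energy-type tests that were used to prove \eqref{eqn:uniform_pf4}: test the $D_\tau e_\tau^{n+\frac12}$-equation with $\widehat{D_\tau e}_\tau$ and with $\Delta \widehat{D_\tau e}_\tau$, take imaginary parts, and sum over $n$. The rotation term $L_z$ contributes nothing to the $L^2$ balance (as in \eqref{eqn-timediscrete-conservation-proof3}) and is absorbed into the $H^1$ balance via integration by parts and Cauchy–Schwarz, exactly as in the derivation of \eqref{eqn:uniform_pf4}–\eqref{eqn:uniform_pf6}; the potential term $V\widehat{D_\tau e}_\tau$ is bounded using $\|V\|_{L^\infty}$. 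After a discrete Gronwall argument (licit because the partition is quasi-uniform and $\tau\le\tau_0^*$), this yields
\begin{align}\label{eqn:time_error2_plan}
\sup_{0\leq n\leq N}\left\|D_\tau e_\tau^{n+\frac12}\right\|_{H^1}\leq C_{E,\tau}\tau^2,
\qquad
\sup_{0\leq n\leq N}\left\|D_\tau e_\tau^{n+\frac12}\right\|_{L^2}\leq C_{E,\tau}\tau^2 .
\end{align}
In particular $\|D_\tau e_\tau^{n+\frac12}\|_{L^2}=O(\tau^2)$, which is an order better than the bound $O(\tau)$ recorded in \eqref{eqn:uniform_pf5}.

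Finally I would feed \eqref{eqn:time_error2_plan} back into the error equation \eqref{eqn:uniform_pf1} itself, now solved for the Laplacian term: from $-\tfrac12\Delta\hat e_\tau^{n+\frac12} = iD_\tau e_\tau^{n+\frac12}-V\hat e_\tau^{n+\frac12}+\Omega L_z\hat e_\tau^{n+\frac12}-\mathcal N_\tau^{n+\frac12}-\mathcal R_\tau^{n+\frac12}$, every term on the right is now $O(\tau^2)$ in $L^2$: the first by \eqref{eqn:time_error2_plan}, the potential and nonlinear and rotation terms by \eqref{eqn:time_error1b} and \eqref{eqn:uniform_pf3} (note $L_z\hat e$ costs one derivative, which \eqref{eqn:time_error1b} supplies in $H^2$ with a factor $\tau^{-2}$ — this is precisely why the $H^2$ part of \eqref{eqn:time_error1b} is needed and why the bootstrap closes), and $\mathcal R$ by \eqref{eqn:uniform_pf1a3}. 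Hence $\|\Delta \hat e_\tau^{n+\frac12}\|_{L^2}=O(\tau^2)$, and combining with \eqref{eqn:time_error1b} and elliptic regularity (on the rectangle $U$ with homogeneous Dirichlet data) gives $\|e_\tau^{n+1}\|_{H^2}\le C_{E,\tau}\tau^2$, which is \eqref{eqn:time_error2}.

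The main obstacle I anticipate is the circular dependence in the $H^2$ part of \eqref{eqn:time_error1b}: the rotation term $\Omega L_z\hat e_\tau^{n+\frac12}$ in the differentiated equation requires an $H^1$ bound on a first derivative of $e$, i.e. essentially an $H^2$ bound on $e$, which is a priori only $O(1)$ by \eqref{eqn:uniform_pf6}. Closing the loop therefore requires carefully arranging the two energy tests so that the top-order rotation contribution is either antisymmetric (and cancels) or is absorbed by the left-hand side with a coefficient $<1$; getting this bookkeeping right — together with verifying that the discrete Gronwall constant stays $\tau$-independent once $\tau\le\tau_0^*$ — is where the real work lies. All remaining estimates are routine given Lemmas \ref{lem:time_error1} and the properties \eqref{eqn:cutoff_pro}.
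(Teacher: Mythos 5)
Your first stage is sound and coincides with the paper's: differencing the error equation in time, setting $H_\tau^{n+1}=(e_\tau^{n+1}-e_\tau^{n})/\tau$, testing with $\hat H_\tau^{n+\frac12}$ and taking imaginary parts does give $\sup_n\|D_\tau e_\tau^{n+\frac12}\|_{L^2}\leq C\tau^2$; this is exactly \eqref{eqn:time_error2_pf23}. The genuine gap is in your final step. Solving the error equation for the Laplacian controls $\|\Delta\hat e_\tau^{n+\frac12}\|_{L^2}$, i.e.\ the Laplacian of the \emph{average} $(e_\tau^{n}+e_\tau^{n+1})/2$, not of $e_\tau^{n+1}$ itself. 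To recover $\Delta e_\tau^{n+1}$ you must un-average via $\Delta e_\tau^{n+1}=2\Delta\hat e_\tau^{n+\frac12}-\Delta e_\tau^{n}=2\sum_{k=0}^{n}(-1)^{n-k}\Delta\hat e_\tau^{k+\frac12}$, an alternating sum of up to $N=T/\tau$ terms each of size $O(\tau^2)$ with no cancellation available, which yields only $\|\Delta e_\tau^{n+1}\|_{L^2}=O(N\tau^2)=O(\tau)$. This is precisely the loss the paper itself incurs at the lower order in passing from $\|\Delta\hat e_\tau^{T,n+\frac12}\|_{L^2}\leq C\tau$ to the merely $O(1)$ bound \eqref{eqn:uniform_pf6}--\eqref{eqn:uniform_pf7}; your elliptic-regularity step inherits the same defect one order up. Your auxiliary $H^1$ bound on $D_\tau e_\tau^{n+\frac12}$ does not repair this: writing $e_\tau^{n+1}=\hat e_\tau^{n+\frac12}+\frac{\tau}{2}D_\tau e_\tau^{n+\frac12}$ shows you would need $\|D_\tau e_\tau^{n+\frac12}\|_{H^2}=O(\tau)$, a full derivative more than you establish, and bootstrapping that estimate runs into the same averaging loss again.

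The paper closes the argument differently: it tests the \emph{original} error equation with $D_\tau\Delta e_\tau^{n+\frac12}$ and takes real parts, producing in \eqref{eqn:time_error2_pf3} a telescoping identity for $\|\Delta e_\tau^{n+1}\|_{L^2}^2-\|\Delta e_\tau^{n}\|_{L^2}^2$ directly, with no averaging to undo. The rotation term telescopes exactly by antisymmetry (see \eqref{eqn:time_error2_pf7}), the $V$, $\mathcal N_\tau$ and $\mathcal R_\tau$ contributions are handled by discrete summation by parts in time, and the resulting Gronwall inequality \eqref{eqn:time_error2_pf18} is fed by $\sum_m\|D_\tau e_\tau^{m+\frac12}\|_{L^2}^2$ — which is where your (and the paper's) $O(\tau^2)$ bound on $D_\tau e_\tau$ enters. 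In short: you have the right auxiliary estimate, but the $H^2$ bound must come from a real-part energy identity for $\|\Delta e_\tau^{n+1}\|_{L^2}^2$ itself, not from inverting the scheme for $\Delta\hat e_\tau^{n+\frac12}$. Your worry about circularity in the rotation term is, by contrast, not an obstacle: $L_z\hat H_\tau^{n+\frac12}$ costs only one derivative and $Im(L_z w,w)$ vanishes for $w\in H_0^1(U)$, so it drops out of the $L^2$ balance for $H_\tau$ entirely.
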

\begin{proof}
Define 
$e_\tau^{n}=u^n-u_\tau^{n}$ for $n\geq 0$.
From \eqref{eqn:timediscrete}  and \eqref{eqn:model}, we have 
\begin{align}\label{eqn:time_error2_pf1}
iD_\tau e_\tau^{n+\frac{1}{2}} = -\frac{1}{2}\Delta \hat e_\tau^{n+\frac{1}{2}}+V\hat e_\tau^{n+\frac{1}{2}}-\Omega L_z\hat e_\tau^{n+\frac{1}{2}}
+\mathcal N_\tau^{n+\frac{1}{2}}+\mathcal R_\tau^{n+\frac{1}{2}},
\end{align}
where $R_\tau^{n+\frac{1}{2}}=R_\tau^{T, n+\frac{1}{2}}$ and 
\begin{align}\label{eqn:time_error2_pf2}
\mathcal N_\tau^{n+\frac{1}{2}}=\beta\left[\frac{|u^{n}|^2+|u^{n+1}|^2}{2}	\hat u^{n+\frac{1}{2}}	
-\frac{|u_\tau^{n}|^2+|u_\tau^{n+1}|^2}{2}	\hat u_\tau^{n+\frac{1}{2}}\right]. 
\end{align}
Taking the inner product of \eqref{eqn:time_error2_pf1} with $D_\tau \Delta e_\tau^{n+\frac{1}{2}}$, and then selecting the real parts, we get 
\begin{align}\label{eqn:time_error2_pf3}
\frac{\|\Delta e_\tau^{n+1}\|_{L^2}^2-\|\Delta e_\tau^{n}\|_{L^2}^2}{4\tau}	
=Re\left(V\hat{e}_\tau^{n+\frac{1}{2}}, D_\tau\Delta e_\tau^{n+\frac{1}{2}}\right)
-\Omega Re\left(L_z\hat{e}_\tau^{n+\frac{1}{2}}, D_\tau\Delta e_\tau^{n+\frac{1}{2}}\right)
+Re\left(\mathcal N_\tau^{n+\frac{1}{2}}, D_\tau\Delta e_\tau^{n+\frac{1}{2}}\right)+Re\left(\mathcal R_\tau^{n+\frac{1}{2}}, D_\tau\Delta e_\tau^{n+\frac{1}{2}}\right). 
\end{align}
We next analyze the four terms on the right-hand side of \eqref{eqn:time_error2_pf2}. Firstly, we have 
\begin{align}
&\left(V\hat{e}_\tau^{n+\frac{1}{2}}, D_\tau\Delta e_\tau^{n+\frac{1}{2}}\right)
=\frac{\left(V\hat{e}_\tau^{n+\frac{1}{2}}, \Delta e_\tau^{n+1}\right)-\left(V\hat{e}_\tau^{n-\frac{1}{2}}, \Delta e_\tau^{n}\right)}{\tau}
-\frac{1}{2}\left(V\left[D_\tau e_\tau^{n+\frac{1}{2}}+D_\tau e_\tau^{n-\frac{1}{2}}\right], \Delta e_\tau^n\right),\label{eqn:time_error2_pf4}\\
&\left(\mathcal N_\tau^{n+\frac{1}{2}}, D_\tau\Delta e_\tau^{n+\frac{1}{2}}\right)=\frac{(\mathcal N_\tau^{n+\frac{1}{2}}, \Delta e_\tau^{n+1})-(\mathcal N_\tau^{n-\frac{1}{2}}, \Delta e_\tau^{n})}{\tau}
  	-\left(\frac{\mathcal N_\tau^{n+\frac{1}{2}}-\mathcal N_\tau^{n-\frac{1}{2}}}{\tau}, \Delta e_\tau^n\right),\label{eqn:time_error2_pf5}\\
  	&\left(\mathcal R_\tau^{n+\frac{1}{2}}, D_\tau\Delta e_\tau^{n+\frac{1}{2}}\right)=\frac{(\mathcal R_\tau^{n+\frac{1}{2}}, \Delta e_\tau^{n+1})-(\mathcal R_\tau^{n-\frac{1}{2}}, \Delta e_\tau^{n})}{\tau}
  	-\left(\frac{\mathcal R_\tau^{n+\frac{1}{2}}-\mathcal R_\tau^{n-\frac{1}{2}}}{\tau}, \Delta e_\tau^n\right).\label{eqn:time_error2_pf6}
\end{align}
Then, similar as \eqref{eqn-timediscrete-conservation-proof2}, we have 
\begin{align}\label{eqn:time_error2_pf7}
-\Omega Re\left(L_z\hat{e}_\tau^{n+\frac{1}{2}}, D_\tau\Delta e_\tau^{n+\frac{1}{2}}\right)
=\frac{\Omega}{2\tau}\left(\left[x\partial y-y\partial x\right]\nabla e_\tau^{n+1}, \nabla e_\tau^{n+1}\right)-
\frac{\Omega}{2\tau}\left(\left[x\partial y-y\partial x\right]\nabla e_\tau^{n}, \nabla e_\tau^{n}\right).
\end{align}
For convenience, we denote
\begin{align}\label{eqn:time_error2_pf8}
	\frac{\mathcal N_\tau^{n+\frac{1}{2}}-\mathcal N_\tau^{n-\frac{1}{2}}}{\tau}=
	\frac{\beta}{4}*\left[\frac{K_1^n-K_1^{n-1}}{\tau}
	+
	\frac{K_2^n-K_2^{n-1}}{\tau}
	+
	\frac{K_3^n-K_3^{n-1}}{\tau}
	+
	\frac{K_4^n-K_4^{n-1}}{\tau}
	\right],
\end{align}
where $K_1^n:= |u^{n}|^2u^n-|u_\tau^{n}|^2u_\tau^n, K_2^n:= |u^{n}|^2u^{n+1}-|u_\tau^{n}|^2u_\tau^{n+1}, K_3^n:= |u^{n+1}|^2u^n-|u_\tau^{n+1}|^2u_\tau^n$ and $K_4^n:= |u^{n+1}|^2u^{n+1}-|u_\tau^{n+1}|^2u_\tau^{n+1}$. We further have 
\begin{align}\label{eqn:time_error2_pf9}
&\frac{K_1^n-K_1^{n-1}}{\tau}
=\frac{\left[|u^n|^2e_\tau^n+(u^n\bar{e}_\tau^n+e_\tau^n\bar{u}_\tau^n)u_\tau^n\right]-\left[|u^{n-1}|^2e_\tau^{n-1}+(u^{n-1}\bar{e}_\tau^{n-1}+e_\tau^{n-1}\bar{u}_\tau^{n-1})u_\tau^{n-1}\right]}{\tau}	\nn\\
&=\frac{|u^n|^2e_\tau^n-|u^{n-1}|^2e_\tau^{n-1}}{\tau}
+\frac{u^n\bar{e}_\tau^nu_\tau^n-u^{n-1}\bar{e}_\tau^{n-1}u_\tau^{n-1}}{\tau}
+\frac{e_\tau^n|u_\tau^{n}|^2-e_\tau^{n-1}|u_\tau^{n-1}|^2}{\tau}\nonumber\\
&=\left(|u^n|^2\frac{e_\tau^n-e_\tau^{n-1}}{\tau}
+\frac{|u^n|^2-|u^{n-1}|^2}{\tau}e_\tau^{n-1}\right)
+\left(\frac{u^n-u^{n-1}}{\tau}\bar{e}_\tau^nu_\tau^n+u^{n-1}\frac{\bar{e}_\tau^n-\bar{e}_\tau^{n-1}}{\tau}u_\tau^n+u^{n-1}\bar{e}_\tau^{n-1}\frac{u_\tau^n-u_\tau^{n-1}}{\tau}\right)\nonumber\\
&~~~~~~+\left(\frac{e_\tau^n-e_\tau^{n-1}}{\tau}(u_\tau^n)^2+e_\tau^{n-1}\frac{\bar{u}_\tau^n-\bar{u}_\tau^{n-1}}{\tau}u_\tau^n+e_\tau^{n-1}\bar{u}_\tau^{n-1}\frac{u_\tau^n-u_\tau^{n-1}}{\tau}\right).
\end{align}
Then, by using \eqref{eqn:reg} and the results given in Lemma \ref{lem:time_error1}, we derive
\begin{align}\label{eqn:time_error2_pf10}
\left(\frac{K_1^n-K_1^{n-1}}{\tau}, \Delta e_\tau^n\right)	
&\leq C\left(\left\|\frac{e_\tau^n-e_\tau^{n-1}}{\tau}\right\|_{L^2}\left\|\Delta e_\tau^n\right\|_{L^2}
+\left\|e_\tau^{n}\right\|_{L^2}\left\|\Delta e_\tau^n\right\|_{L^2}
+\left\|e_\tau^{n-1}\right\|_{L^2}\left\|\Delta e_\tau^n\right\|_{L^2}
+\left\|e_\tau^{n-1}\right\|_{L^4}
\left\|\frac{u_\tau^n-u_\tau^{n-1}}{\tau}\right\|_{L^4}\left\|\Delta e_\tau^n\right\|_{L^2}\right)\nn\\
&\leq C\left(\left\|\frac{e_\tau^n-e_\tau^{n-1}}{\tau}\right\|_{L^2}^2+\left\|\Delta e_\tau^n\right\|_{L^2}^2+\tau^4\right).
\end{align}
Similarly, we have 
\begin{align}\label{eqn:time_error2_pf11}
& \left(\frac{K_2^n-K_2^{n-1}}{\tau}+\frac{K_3^n-K_3^{n-1}}{\tau}+\frac{K_4^n-K_4^{n-1}}{\tau}, \Delta e_\tau^n\right)\leq 	
C\left(\left\|\frac{e_\tau^{n+1}-e_\tau^{n}}{\tau}\right\|_{L^2}^2+\left\|\frac{e_\tau^n-e_\tau^{n-1}}{\tau}\right\|_{L^2}^2+\left\|\Delta e_\tau^n\right\|_{L^2}^2+\tau^4\right).
\end{align}
Hence, from \eqref{eqn:time_error2_pf10} and \eqref{eqn:time_error2_pf11}, we get 
\begin{align}\label{eqn:time_error2_pf12}
	\left(\frac{\mathcal N_\tau^{n+\frac{1}{2}}-\mathcal N_\tau^{n-\frac{1}{2}}}{\tau}, \Delta e_\tau^n\right)\leq C_{\mathcal N}\left(\left\|\frac{e_\tau^{n+1}-e_\tau^{n}}{\tau}\right\|_{L^2}^2+\left\|\frac{e_\tau^n-e_\tau^{n-1}}{\tau}\right\|_{L^2}^2+\left\|\Delta e_\tau^n\right\|_{L^2}^2+\tau^4\right).
\end{align}

In addition, using Taylor formulation, we have 
\begin{align*}
	\left\|\frac{\mathcal R_\tau^{n+\frac{1}{2}}-\mathcal R_\tau^{n-\frac{1}{2}}}{\tau}\right\|_{L^2}\leq C_R\tau^2,
\end{align*}
which implies that 
\begin{align}\label{eqn:time_error2_pf13}
\left(\frac{\mathcal R_\tau^{n+\frac{1}{2}}-\mathcal R_\tau^{n-\frac{1}{2}}}{\tau}, \Delta e_\tau^n\right)\leq C	\left(\left\|\Delta e_\tau^n\right\|_{L^2}^2+\tau^4\right).
\end{align}
Therefore, using \eqref{eqn:time_error2_pf4}-\eqref{eqn:time_error2_pf13} in \eqref{eqn:time_error2_pf3} gives 
\begin{align}\label{eqn:time_error2_pf14}
\frac{\|\Delta e_\tau^{n+1}\|_{L^2}^2-\|\Delta e_\tau^{n}\|_{L^2}^2}{4\tau}	\leq \frac{J^{n+1}-J^n}{\tau}	+ C\left(\left\|\frac{e_\tau^{n+1}-e_\tau^{n}}{\tau}\right\|_{L^2}^2+\left\|\frac{e_\tau^n-e_\tau^{n-1}}{\tau}\right\|_{L^2}^2+\left\|\Delta e_\tau^n\right\|_{L^2}^2+\tau^4\right),
\end{align}
where 
\begin{align*}
	J^n:=\left(V\hat{e}_\tau^{n-\frac{1}{2}}, \Delta e_\tau^{n}\right)
	+
	\frac{\Omega}{2}\left(\left[x\partial y-y\partial x\right]\nabla e_\tau^{n}, \nabla e_\tau^{n}\right)
	+(\mathcal N_\tau^{n-\frac{1}{2}}, \Delta e_\tau^{n})
	+(\mathcal R_\tau^{n-\frac{1}{2}}, \Delta e_\tau^{n}).
\end{align*}
Replacing $n$ by $m$ in \eqref{eqn:time_error2_pf14}, and then summing the inequality from $2$ to $n$, we get 
\begin{align}\label{eqn:time_error2_pf15}
\|\Delta e_\tau^{n+1}\|_{L^2}^2\leq 
\|\Delta e_\tau^{0}\|_{L^2}^2+	J^{n+1}
+C\tau\sum_{m=0}^n\left\|\frac{e_\tau^{m+1}-e_\tau^{m}}{\tau}\right\|_{L^2}^2+C\tau \sum_{m=0}^n\|\Delta e_\tau^{m}\|_{L^2}^2+C\tau^4.
\end{align}
By virtue of Young's inequality, we have 
\begin{align}\label{eqn:time_error2_pf16}
	J^{n+1}\leq \epsilon \|\Delta e_\tau^n\|^2_{L^2}+C_\epsilon\|\mathcal N_\tau^{n-\frac{1}{2}}\|_{L^2}^2
	+C_\epsilon\|\mathcal R_\tau^{n-\frac{1}{2}}\|_{L^2}^2+C_\epsilon \tau^4.
\end{align}
Since $\mathcal N_\tau^{n-\frac{1}{2}}=\mathcal N_\tau^{T, n-\frac{1}{2}}$ that satisfies \eqref{eqn:uniform_pf3}, and $\mathcal R_\tau^{n-\frac{1}{2}}=\mathcal R_\tau^{T, n-\frac{1}{2}}$ that satisfies \eqref{eqn:uniform_pf1a3}, we obtain from \eqref{eqn:time_error2_pf16} and the known results given in Lemma \ref{lem:time_error1} that 
\begin{align}\label{eqn:time_error2_pf17}
	J^{n+1}\leq \epsilon \|\Delta e_\tau^n\|^2_{L^2}+C_\epsilon+C_\epsilon \tau^4.
\end{align}
Selecting sufficient small $\epsilon$, then combining \eqref{eqn:time_error2_pf15} and 
\eqref{eqn:time_error2_pf17} shows 
\begin{align}\label{eqn:time_error2_pf18}
	\|\Delta e_\tau^{n+1}\|_{L^2}^2\leq C\tau\sum_{m=0}^n\left\|\frac{e_\tau^{m+1}-e_\tau^{m}}{\tau}\right\|_{L^2}^2
+C\tau \sum_{m=0}^n\|\Delta e_\tau^{m}\|_{L^2}^2+C\tau^4.
\end{align}

Taking the difference of \eqref{eqn:time_error2_pf1} between two consecutive steps, we obtain 
	\begin{align}\label{eqn:time_error2_pf19}
	 	i\frac{e_\tau^{n+1}-2e_\tau^{n}+e_\tau^{n-1}}{\tau^2} = -\frac{\Delta e_\tau^{n+1}-\Delta e_\tau^{n-1}}{4\tau}+
	 	\frac{V e_\tau^{n+1}-V e_\tau^{n-1}}{2\tau}
	 	-\Omega \frac{L_ze_\tau^{n+1}-L_ze_\tau^{n-1}}{2\tau}
+\frac{\mathcal N_\tau^{n+\frac{1}{2}}-\mathcal N_\tau^{n-\frac{1}{2}}}{\tau}+\frac{\mathcal R_\tau^{T, n+\frac{1}{2}}-\mathcal R_\tau^{T, n-\frac{1}{2}}}{\tau}.
	\end{align}
For simplicity, we denote $e_\tau^{n+1}-e_\tau^n=\tau H_\tau^{n+1}$. Then we rewrite \eqref{eqn:time_error2_pf19} as 
\begin{align}\label{eqn:time_error2_pf20}
		iD_\tau H^{n+\frac{1}{2}}_\tau=-\frac{1}{2}\Delta \hat H_\tau^{n+\frac{1}{2}}+V\hat H_\tau^{n+\frac{1}{2}}-\Omega L_z\hat H_\tau^{n+\frac{1}{2}}
+\frac{\mathcal N_\tau^{n+\frac{1}{2}}-\mathcal N_\tau^{n-\frac{1}{2}}}{\tau}+\frac{\mathcal R_\tau^{T, n+\frac{1}{2}}-\mathcal R_\tau^{T, n-\frac{1}{2}}}{\tau}.
	\end{align}
Taking the inner product of \eqref{eqn:time_error2_pf20} with $\hat H_\tau^{T, n+\frac{1}{2}}$, and selecting the imaginary part of the resulting, 
we have 
\begin{align}\label{eqn:time_error2_pf21}
		\frac{\|H_\tau^{n+1}\|_{L^2}^2-\|H_\tau^{n}\|_{L^2}^2}{2\tau}=
\left(\frac{\mathcal N_\tau^{n+\frac{1}{2}}-\mathcal N_\tau^{n-\frac{1}{2}}}{\tau}, \hat H_\tau^{T, n+\frac{1}{2}}\right)+\left(\frac{\mathcal R_\tau^{T, n+\frac{1}{2}}-\mathcal R_\tau^{T, n-\frac{1}{2}}}{\tau}, \hat H_\tau^{T, n+\frac{1}{2}}\right).
	\end{align}
	Similar as \eqref{eqn:time_error2_pf12} and \eqref{eqn:time_error2_pf13}, we can obtain 
	\begin{align}\label{eqn:time_error2_pf22}
	\left(\frac{\mathcal N_\tau^{n+\frac{1}{2}}-\mathcal N_\tau^{n-\frac{1}{2}}}{\tau}, \hat H_\tau^{T, n+\frac{1}{2}}\right)+\left(\frac{\mathcal R_\tau^{T, n+\frac{1}{2}}-\mathcal R_\tau^{T, n-\frac{1}{2}}}{\tau}, \hat H_\tau^{T, n+\frac{1}{2}}\right)\leq 	
	C\left(\left\|H_\tau^{n+1}\right\|_{L^2}^2+\left\|H^n_\tau\right\|_{L^2}^2\right)+C\tau^4.
	\end{align}
From \eqref{eqn:time_error2_pf21} and \eqref{eqn:time_error2_pf22}, and by using discrete Gronwall's inequality, there exists a positive constant $\tau_2$ such that when $\tau\leq \tau_2$,
\begin{align}\label{eqn:time_error2_pf23}
 \|H_\tau^{n+1}\|_{L^2}=\left\|\frac{e_\tau^{n+1}-e_\tau^n}{\tau}\right\|_{L^2}\leq C_{H}\tau^2,
\end{align}
where $C_H>0$ is a constant independent of $\tau$.

Substituting \eqref{eqn:time_error2_pf23} into \eqref{eqn:time_error2_pf18}, it is sufficient to prove that 
\begin{align}
	\|\Delta e_\tau^{n+1}\|_{L^2}\leq C_{E, \tau}\tau^2,
\end{align}
provided that $\tau<\tau_3$ with a positive constant $\tau_3$. Setting $\tau_0^*=\min\{\tau_0, \tau_2, \tau_3\}$, then if $\tau\leq \tau_0^*$, and thanks to the given results in Lemma \ref{lem:time_error1}, we finally obtain the high-order convergence result \eqref{eqn:time_error2}.
\end{proof}

\begin{rem}
The idea of the proof is to first establish the $L^\infty$-norm boundedness of the solution to the truncated time-discrete method. Subsequently, we discard the truncated function and exclusively investigate the $H^2$-norm convergence of the time-discrete system. 
 In addition, although in the proof it is also existing a mild condition that $\tau$ should be sufficiently small, we can also remove this condition by using the conservation properties given in Theorem \ref{thm-timediscrete-conservation}. But For simplicity, we do not intend to list its proof in the paper. 
\end{rem}


\subsection{Spatial error analysis}
In this subsection, we delve into the optimal and high-order error analysis of the fully discrete method outlined in Definition \ref{Def:fully}. As for the time-discrete method, we also take a detour over its auxiliary system. For simplicity, our primary focus remains on the conforming FEM, while it's noteworthy that a similar convergence analysis can be carried out for the nonconforming FEM, and a concise proof of the nonconforming case is provided in the Appendix. 
\subsubsection{Fully discrete method with truncation}
\begin{myDef}
	(Fully discrete method with truncation) Let $u_{h,\tau}^{T, 0}$ be a suitable interpolation of $u^0$. Then for $n\geq 1$, we define the following truncated fully discrete system, which is to find $u_{h,\tau}^{T, n+1}\in V_h^{C}$, $0\leq n\leq N-1$ such that 
	\begin{align}\label{eqn:conformingfullydiscrete_truncation}
		i\left(D_\tau u_{h,\tau}^{T, n+\frac{1}{2}}, \omega_h\right)=\frac{1}{2}\left(\nabla \hat u_{h,\tau}^{T, n+\frac{1}{2}}, \nabla\omega_h\right)+\left(V\hat u_{h,\tau}^{T, n+\frac{1}{2}}, \omega_h\right)-\Omega \left(L_z\hat u_{h,\tau}^{T, n+\frac{1}{2}}, \omega_h\right)
+\beta\left(\frac{\mu_A(|u_{h, \tau}^{T, n}|^2)+\mu_A(|u_{h, \tau}^{T, n+1}|^2)}{2}	\hat u_{h, \tau}^{T, n+\frac{1}{2}}, \omega_h\right),
	\end{align}
for any $\omega_h\in V_h^C$. 
\end{myDef}

The existence of the solution to the truncated system \eqref{eqn:conformingfullydiscrete_truncation} is covered similar as the proof of truncated time-discrete method. Before obtaining the $L^\infty$-norm boundedness of $u_{h, \tau}^{n}$ for $n\geq 1$, we should first consider the truncated system \eqref{eqn:conformingfullydiscrete_truncation}.

\begin{lemma}\label{lem:spacetruncated_error}
	Suppose the regularity condition \eqref{eqn:reg} holds, and denote $u_{h, \tau}^{T, n}\in V_h$ as a solution of the truncated fully discrete system \eqref{eqn:conformingfullydiscrete_truncation}. Then there exists a positive constant $\tau_0^{**}$, such that when $\tau\leq \tau_0^{**}$, it holds 
	\begin{align}\label{eqn:spacetruncated_error}
	\sup_{0\leq n\leq N}\left\|R_hu_{\tau}^{n}-u_{h, \tau}^{T, n}\right\|	_{L^2}\leq C_{E, h}(h^2+h\tau), 
	\end{align}
where $C_{E, h}$ is a positive constant independent of $h$ and $\tau$. 

\end{lemma}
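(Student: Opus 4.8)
The plan is to run a Ritz-projection error splitting between the truncated fully discrete solution $u_{h,\tau}^{T,n}$ and the Ritz projection of the time-discrete solution $u_\tau^n$, exploiting the facts already proved in Lemma~\ref{lem:time_error1} that $u_\tau^n$ is bounded in $H^2$ and satisfies $\|u_\tau^n\|_{L^\infty}\le K_0$, so that $\mu_A(|u_\tau^n|^2)=|u_\tau^n|^2$. First I would write $u_\tau^n-u_{h,\tau}^{T,n}=\rho^n+\theta^n$ with $\rho^n:=u_\tau^n-R_hu_\tau^n$ and $\theta^n:=R_hu_\tau^n-u_{h,\tau}^{T,n}\in V_h^C$, so that \eqref{eqn:spacetruncated_error} is equivalent to a bound on $\sup_n\|\theta^n\|_{L^2}$. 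The Ritz estimate \eqref{eqn:ritz_error} together with the $H^2$-bound of Lemma~\ref{lem:time_error1} gives $\|\rho^n\|_{L^2}\le Ch^2$ and $\|\rho^n\|_{1,h}\le Ch$; applying \eqref{eqn:ritz_error} to the discrete time difference yields $\|D_\tau\rho^{n+\frac{1}{2}}\|_{L^2}\le Ch^2\|D_\tau u_\tau^{n+\frac{1}{2}}\|_{H^2}$, and, using \eqref{eqn:reg} and the time-discrete error estimate \eqref{eqn:time_error1b} to bound $\|u_\tau^{n+1}-u_\tau^n\|_{H^1}$, also $\|\rho^{n+1}-\rho^n\|_{L^2}\le Ch\|u_\tau^{n+1}-u_\tau^n\|_{H^1}$.

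Next I would derive the error equation for $\theta^n$: test \eqref{eqn:timediscrete} against $\omega_h\in V_h^C$ (legitimate since $u_\tau^{n+\frac{1}{2}}\in H^2\cap H_0^1$ by Lemma~\ref{lem:time_error1}), integrate the Laplacian by parts, and subtract \eqref{eqn:conformingfullydiscrete_truncation}; the Ritz orthogonality \eqref{eqn:ritz} removes the $\rho$-contribution to the stiffness term, leaving a Crank--Nicolson-type equation for $\theta^n$ whose right-hand side comprises $-i(D_\tau\rho^{n+\frac{1}{2}},\omega_h)$, $(V\hat\rho^{n+\frac{1}{2}},\omega_h)$, $-\Omega(L_z\hat\rho^{n+\frac{1}{2}},\omega_h)$, and the difference of the truncated nonlinearities $\beta(\mathcal N^{n+\frac{1}{2}},\omega_h)$. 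Taking $\omega_h=\hat\theta^{n+\frac{1}{2}}$ and extracting the imaginary part, the terms $\tfrac{1}{2}(\nabla\hat\theta^{n+\frac{1}{2}},\nabla\hat\theta^{n+\frac{1}{2}})$ and $(V\hat\theta^{n+\frac{1}{2}},\hat\theta^{n+\frac{1}{2}})$ are real and vanish, and — by the self-adjointness computation already used in \eqref{eqn-timediscrete-conservation-proof2}--\eqref{eqn-timediscrete-conservation-proof3} — so does $\Omega(L_z\hat\theta^{n+\frac{1}{2}},\hat\theta^{n+\frac{1}{2}})$; this reduces the equation to
\[
\frac{\|\theta^{n+1}\|_{L^2}^2-\|\theta^{n}\|_{L^2}^2}{2\tau}
=-Re\left(D_\tau\rho^{n+\frac{1}{2}},\hat\theta^{n+\frac{1}{2}}\right)
+Im\left(V\hat\rho^{n+\frac{1}{2}},\hat\theta^{n+\frac{1}{2}}\right)
-\Omega\,Im\left(L_z\hat\rho^{n+\frac{1}{2}},\hat\theta^{n+\frac{1}{2}}\right)
+\beta\,Im\left(\mathcal N^{n+\frac{1}{2}},\hat\theta^{n+\frac{1}{2}}\right).
\]

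It then remains to control the four terms. The potential term is $\le Ch^2\|\hat\theta^{n+\frac{1}{2}}\|_{L^2}$; the nonlinear term, using the Lipschitz bounds \eqref{eqn:cutoff_pro} on $\mu_A$, the uniform bound $\|\mu_A(\cdot)\|_{L^\infty}\le C_M$, and the $L^\infty$-bound $\|u_\tau^n\|_{L^\infty}\le K_0$, is $\le C(h^2+\|\theta^n\|_{L^2}+\|\theta^{n+1}\|_{L^2})\|\hat\theta^{n+\frac{1}{2}}\|_{L^2}$; and the $D_\tau\rho$ term is handled by a summation by parts in $n$, which turns it into telescoping boundary contributions $\|\rho^n\|_{L^2}\|\theta^n\|_{L^2}$ plus an accumulation of $\|\rho^{n+1}-\rho^n\|_{L^2}\|\hat\theta^{n+\frac{1}{2}}\|_{L^2}$ bounded via the $h\tau$-type estimate above. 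The genuinely delicate term — and the main obstacle — is the rotation contribution $\Omega\,Im(L_z\hat\rho^{n+\frac{1}{2}},\hat\theta^{n+\frac{1}{2}})$: since $L_z$ is first order while the Ritz projection is quasi-optimal only in $H^1$, a direct bound by $\|\nabla\hat\rho^{n+\frac{1}{2}}\|_{L^2}\|\hat\theta^{n+\frac{1}{2}}\|_{L^2}$ costs a full power of $h$, so instead I would move $L_z$ onto $\hat\theta^{n+\frac{1}{2}}$ by self-adjointness, invoke the inverse inequality $\|L_z\hat\theta^{n+\frac{1}{2}}\|_{L^2}\le Ch^{-1}\|\hat\theta^{n+\frac{1}{2}}\|_{L^2}$ against the sharp bound $\|\hat\rho^{n+\frac{1}{2}}\|_{L^2}\le Ch^2$, and combine this with a summation by parts in time to extract the extra factor $\tau$ — this is exactly what produces the order $h^2+h\tau$ rather than $h$. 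Finally, summing over $n$, applying Young's inequality and the discrete Gronwall inequality, and choosing $\tau\le\tau_0^{**}$ (with $\tau_0^{**}$ independent of $h$, so that no time-step/mesh coupling is introduced) to absorb the $\|\theta^{n+1}\|_{L^2}^2$ term, together with $\|\theta^0\|_{L^2}\le Ch^2$ for a suitable interpolation $u_{h,\tau}^{T,0}$, yields \eqref{eqn:spacetruncated_error}.
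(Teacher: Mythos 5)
Your overall architecture — the splitting $u_\tau^n-u_{h,\tau}^{T,n}=\rho^n+\theta^n$ via the Ritz projection, the error equation obtained by subtracting the truncated fully discrete scheme from the weak form of the time-discrete scheme, testing with $\hat\theta^{n+\frac12}$, taking imaginary parts so that the stiffness, potential and $\Omega(L_z\hat\theta,\hat\theta)$ terms drop out, the Lipschitz treatment of the truncated nonlinearity, and the final Gronwall step — coincides with the paper's proof. Your summation-by-parts treatment of the $D_\tau\rho$ term is a workable alternative to the paper's direct bound (the paper instead invokes the improved estimate $\sup_n\|u^n-u_\tau^n\|_{H^2}\le C\tau^2$ of Lemma~\ref{lem:time_error2} to control $\|D_\tau u_\tau^{n+\frac12}\|_{H^2}$).

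The genuine gap is in the rotation term, which you correctly identify as the crux but then resolve incorrectly. Moving $L_z$ onto $\hat\theta^{n+\frac12}$ and applying the inverse inequality gives
$|(\hat\rho^{n+\frac12},L_z\hat\theta^{n+\frac12})|\le Ch^2\cdot Ch^{-1}\|\hat\theta^{n+\frac12}\|_{L^2}=Ch\,\|\hat\theta^{n+\frac12}\|_{L^2}$,
and after Gronwall this yields only $\|\theta^n\|_{L^2}\le Ch$, which is strictly weaker than $h^2+h\tau$ when $\tau\ll 1$. The proposed rescue by ``summation by parts in time'' does not supply the missing factor of $\tau$: neither factor in $(\hat\rho^{n+\frac12},L_z\hat\theta^{n+\frac12})$ is a discrete time difference, so Abel summation produces a boundary term of size $h^2\cdot h^{-1}\max_n\|\theta^n\|_{L^2}$ and an accumulated sum of $N$ terms each of size $h^2\tau\cdot h^{-1}\max_n\|\theta^n\|_{L^2}$, i.e.\ again only $O(h)\max_n\|\theta^n\|_{L^2}$. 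The mechanism the paper actually uses (see \eqref{eqn:spacetruncated_error_pf7}) is to split $\hat\rho^{n+\frac12}$ through the \emph{exact} solution, writing $\hat u_\tau=\hat u-\hat e_\tau$ with $e_\tau^n=u^n-u_\tau^n$: the piece $L_z(\hat u-R_h\hat u)$ is $O(h^2)$ in $L^2$ by the regularity of $u$, while the piece $L_z(\hat e_\tau-R_h\hat e_\tau)$ is bounded crudely by $C_{R_h}h\|\hat e_\tau^{n+\frac12}\|_{H^2}\le Ch\tau^2$ using Lemma~\ref{lem:time_error2}. This is precisely where the $h\tau$ (in fact $h\tau^2$) in \eqref{eqn:spacetruncated_error} originates, and it is the one ingredient your argument is missing; without it the lemma as stated is not reached.
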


\begin{proof}
For convenience, we denote 
\begin{align*}
	e_{h,\tau}^{T, n}=u_\tau^n-u_{h, \tau}^{T, n}=\left(u_\tau^n-R_hu_\tau^n\right)+\left(R_hu_\tau^n-u_{h, \tau}^{T, n}\right)=:\rho_{h,\tau}^{T,n}+\theta_{h,\tau}^{T,n}.
\end{align*}
	Subtracting \eqref{eqn:conformingfullydiscrete_truncation} from the variational  formulation of the time-discrete method \eqref{eqn:timediscrete} gives 
	\begin{align}\label{eqn:spacetruncated_error_pf1}
		i\left(D_\tau e_{h,\tau}^{T, n+\frac{1}{2}}, \omega_h\right)=\frac{1}{2}\left(\nabla \hat e_{h,\tau}^{T, n+\frac{1}{2}}, \nabla\omega_h\right)+\left(V\hat e_{h,\tau}^{T, n+\frac{1}{2}}, \omega_h\right)-\Omega \left(L_z\hat e_{h,\tau}^{T, n+\frac{1}{2}}, \omega_h\right)
+\left(\mathcal N_{h,\tau}^{T, n+\frac{1}{2}}, \omega_h\right),\qquad \forall \omega_h\in V_h^C,
	\end{align}
where
\begin{align}\label{eqn:spacetruncated_error_pf2}
\mathcal N_{h,\tau}^{T, n+\frac{1}{2}}=\beta\left[\frac{|u_\tau^{n}|^2+|u_\tau^{n+1}|^2}{2}	\hat u_\tau^{n+\frac{1}{2}}-\frac{\mu_A(|u_{h, \tau}^{T, n}|^2)+\mu_A(|u_{h, \tau}^{T, n+1}|^2)}{2}	\hat u_{h, \tau}^{T, n+\frac{1}{2}}\right].	
\end{align}
Then, by using the definition of the projection $R_h$, \eqref{eqn:spacetruncated_error_pf1} is equivalent to the following one 
\begin{align}\label{eqn:spacetruncated_error_pf3}
		i\left(D_\tau \theta_{h,\tau}^{T, n+\frac{1}{2}}, \omega_h\right)=
		&-i\left(D_\tau \rho_{h,\tau}^{T, n+\frac{1}{2}}, \omega_h\right)+\frac{1}{2}\left(\nabla \hat \theta_{h,\tau}^{T, n+\frac{1}{2}}, \nabla\omega_h\right)+\left(V\hat \rho_{h,\tau}^{T, n+\frac{1}{2}}, \omega_h\right)+\left(V\hat \theta_{h,\tau}^{T, n+\frac{1}{2}}, \omega_h\right)\nn\\
		&-\Omega \left(L_z\hat \rho_{h,\tau}^{T, n+\frac{1}{2}}, \omega_h\right)
		-\Omega \left(L_z\hat \theta_{h,\tau}^{T, n+\frac{1}{2}}, \omega_h\right)
+\left(\mathcal N_{h,\tau}^{T, n+\frac{1}{2}}, \omega_h\right),\qquad \forall \omega_h\in V_h^C.
	\end{align}
Denoting $\omega_h=\hat \theta_{h,\tau}^{T, n+\frac{1}{2}}$ in \eqref{eqn:spacetruncated_error_pf3}, and taking the imaginary part of the resulting, we have 
\begin{align}\label{eqn:spacetruncated_error_pf4}
\frac{\|\theta_{h,\tau}^{T,n+1}\|_{L^2}^2-\|\theta_{h,\tau}^{T,n}\|_{L^2}^2}{2\tau}
= &-Re\left(D_\tau \rho_{h,\tau}^{T, n+\frac{1}{2}}, \hat \theta_{h,\tau}^{T, n+\frac{1}{2}}\right)	+Im\left(V\hat \rho_{h,\tau}^{T, n+\frac{1}{2}}, \hat \theta_{h,\tau}^{T, n+\frac{1}{2}}\right)-\Omega Im\left(L_z\hat \rho_{h,\tau}^{T, n+\frac{1}{2}}, \hat \theta_{h,\tau}^{T, n+\frac{1}{2}}\right)\nn\\
		&
		-\Omega Im\left(L_z\hat \theta_{h,\tau}^{T, n+\frac{1}{2}}, \hat \theta_{h,\tau}^{T, n+\frac{1}{2}}\right)
+Im\left(\mathcal N_{h,\tau}^{T, n+\frac{1}{2}}, \hat \theta_{h,\tau}^{T, n+\frac{1}{2}}\right)
\end{align}
From Lemma \ref{lem:time_error2} and using \eqref{eqn:ritz_error}, we obtain 
\begin{align}\label{eqn:spacetruncated_error_pf5}
	&Re\left(D_\tau \rho_{h,\tau}^{T, n+\frac{1}{2}}, \hat \theta_{h,\tau}^{T, n+\frac{1}{2}}\right)
	\leq \left\|D_\tau \rho_{h,\tau}^{T, n+\frac{1}{2}}\right\|_{L^2}\left\|\hat \theta_{h,\tau}^{T, n+\frac{1}{2}}\right\|_{L^2}
	\leq C_{R_h}h^2\left\|D_\tau u_{\tau}^{n+\frac{1}{2}}\right\|_{H^2}\left\|\hat \theta_{h,\tau}^{T, n+\frac{1}{2}}\right\|_{L^2}\nn\\
	&~~~~~\leq C_{R_h}h^2\left(\left\|D_\tau u^{n+\frac{1}{2}}\right\|_{H^2}+\left\|D_\tau e_\tau^{n+\frac{1}{2}}\right\|_{H^2}\right)\left\|\hat \theta_{h,\tau}^{T, n+\frac{1}{2}}\right\|_{L^2}
	\leq C\left(\|\theta_{h,\tau}^{T, n+1}\|_{L^2}^2+\|\theta_{h,\tau}^{T, n}\|_{L^2}^2\right)+C\left(h^4+h^4\tau^2\right).
\end{align}
For the second term on the right-hand side of 
\eqref{eqn:spacetruncated_error_pf4}, it follows from Lemma \ref{lem:time_error1} that  
\begin{align}\label{eqn:spacetruncated_error_pf6}
	Im\left(V\hat \rho_{h,\tau}^{T, n+\frac{1}{2}}, \hat \theta_{h,\tau}^{T, n+\frac{1}{2}}\right)\leq \|V\|_{L^\infty}\left\|\hat\rho_{h,\tau}^{T, n+\frac{1}{2}}\right\|_{L^2}\left\|\hat \theta_{h,\tau}^{T, n+\frac{1}{2}}\right\|_{L^2}\leq Ch^2\left\|\hat u_{\tau}^{n+\frac{1}{2}}\right\|_{H^2}\left\|\hat \theta_{h,\tau}^{T, n+\frac{1}{2}}\right\|_{L^2}\leq C\left(\|\theta_{h,\tau}^{n+1}\|_{L^2}^2+\|\theta_{h,\tau}^{n}\|_{L^2}^2\right)+Ch^4.
\end{align}
In addition, by virtue of \eqref{eqn:ritz_error} and Lemma \ref{lem:time_error2}, we get 
\begin{align}\label{eqn:spacetruncated_error_pf7}
	&-\Omega Im\left(L_z\hat \rho_{h,\tau}^{T, n+\frac{1}{2}}, \hat \theta_{h,\tau}^{T, n+\frac{1}{2}}\right)
	=-\Omega Im\left(L_z\left[\hat u_{\tau}^{n+\frac{1}{2}}-R_h\hat u_{\tau}^{n+\frac{1}{2}}\right], \hat \theta_{h,\tau}^{T, n+\frac{1}{2}}\right)\nn\\
	&~~~~~~~
	=\Omega Im\left(L_z\left[\hat e_\tau^{n+\frac{1}{2}}-R_h\hat e_\tau^{n+\frac{1}{2}}\right], \hat \theta_{h,\tau}^{T, n+\frac{1}{2}}\right)
	-\Omega Im\left(L_z\left[\hat u^{n+\frac{1}{2}}-R_h\hat u^{n+\frac{1}{2}}\right], \hat \theta_{h,\tau}^{T, n+\frac{1}{2}}\right)\nn\\
	&~~~~~~~\leq |\Omega|C_{R_h}h\left\|L_z\hat e_\tau^{n+\frac{1}{2}}\right\|_{H^1}\left\|\hat \theta_{h,\tau}^{T, n+\frac{1}{2}}\right\|_{L^2}
	+|\Omega|C_{R_h}h^2\left\|L_z\hat u^{n+\frac{1}{2}}\right\|_{H^2}\left\|\hat \theta_{h,\tau}^{T, n+\frac{1}{2}}\right\|_{L^2}\nn\\
	&~~~~~~~\leq C\left(h\tau^2+h^2\right)\left\|\hat \theta_{h,\tau}^{T, n+\frac{1}{2}}\right\|_{L^2}\leq C\left(\|\theta_{h,\tau}^{T,n+1}\|_{L^2}^2+\|\theta_{h,\tau}^{T,n}\|_{L^2}^2\right)+C\left(h^4+h^2\tau^4\right).
\end{align}
Moreover, it is obvious that 
\begin{align}\label{eqn:spacetruncated_error_pf8}
-\Omega Im\left(L_z\hat \theta_{h,\tau}^{T, n+\frac{1}{2}}, \hat \theta_{h,\tau}^{T, n+\frac{1}{2}}\right)
=0.
\end{align}
Thanks to the definition of the truncated function $\mu_A$, we have 
\begin{align}\label{eqn:spacetruncated_error_pf9}
	\mathcal N_{h,\tau}^{T, n+\frac{1}{2}}
	&=\beta\left[\frac{|u_\tau^{n}|^2+|u_\tau^{n+1}|^2}{2}	\hat u_\tau^{n+\frac{1}{2}}-\frac{\mu_A(|u_{h, \tau}^{T, n}|^2)+\mu_A(|u_{h, \tau}^{T, n+1}|^2)}{2}	\hat u_{h, \tau}^{T, n+\frac{1}{2}}\right]\nn\\
	&=\beta\left[\frac{\mu_A(|u_{\tau}^{n}|^2)+\mu_A(|u_{\tau}^{n+1}|^2)}{2}	\hat u_{\tau}^{n+\frac{1}{2}}-\frac{\mu_A(|u_{h, \tau}^{T, n}|^2)+\mu_A(|u_{h, \tau}^{T, n+1}|^2)}{2}	\hat u_{h, \tau}^{T, n+\frac{1}{2}}\right]\nn\\
	&=\beta\left[\frac{\mu_A(|u_{\tau}^{n}|^2)+\mu_A(|u_{\tau}^{n+1}|^2)}{2}	-\frac{\mu_A(|u_{h, \tau}^{T, n}|^2)
	+\mu_A(|u_{h, \tau}^{T, n+1}|^2)}{2}\right]\hat u_{\tau}^{n+\frac{1}{2}}+\beta\frac{\mu_A(|u_{h, \tau}^{T, n}|^2)+\mu_A(|u_{h, \tau}^{T, n+1}|^2)}{2}	\hat e_{h, \tau}^{T, n+\frac{1}{2}}.
	\end{align}
	Then, following the properties of the truncated function $\mu_A$, it is easy to obtain that 
	\begin{align}\label{eqn:spacetruncated_error_pf10}
		\|\mathcal N_{h,\tau}^{T, n+\frac{1}{2}}\|_{L^2}\leq C_{\mathcal N}\left(\|e_{h, \tau}^{T,n}\|_{L^2}+\|e_{h, \tau}^{T,n+1}\|_{L^2}\right).
	\end{align}
Therefore, for the last term on the right-hand side of \eqref{eqn:spacetruncated_error_pf4}, it follows from \eqref{eqn:spacetruncated_error_pf10} and \eqref{eqn:ritz_error} that 
\begin{align}\label{eqn:spacetruncated_error_pf11}
Im\left(\mathcal N_{h,\tau}^{T, n+\frac{1}{2}}, \hat \theta_{h,\tau}^{T, n+\frac{1}{2}}\right)\leq C\left(\|\theta_{h,\tau}^{T,n+1}\|_{L^2}^2+\|\theta_{h,\tau}^{T,n}\|_{L^2}^2\right)+Ch^4.
\end{align}

Now, using \eqref{eqn:spacetruncated_error_pf5}, \eqref{eqn:spacetruncated_error_pf6}, \eqref{eqn:spacetruncated_error_pf7}, 
\eqref{eqn:spacetruncated_error_pf8} and \eqref{eqn:spacetruncated_error_pf11} in 
\eqref{eqn:spacetruncated_error_pf4} obtains that 
\begin{align}\label{eqn:spacetruncated_error_pf12}
	\frac{\|\theta_{h,\tau}^{T,n+1}\|_{L^2}^2-\|\theta_{h,\tau}^{T,n}\|_{L^2}^2}{2\tau}
	\leq C\left(\|\theta_{h,\tau}^{T,n+1}\|_{L^2}^2+\|\theta_{h,\tau}^{T,n}\|_{L^2}^2\right)+C\left(h^4+h^2\tau^4+h^4\tau^2\right).
\end{align}
By using the discrete Gronwall's inequality in \eqref{eqn:spacetruncated_error_pf12}, there exists a positive constant $\tau_4$, such that when $\tau\leq \tau_4$, it holds  
\begin{align}\label{eqn:spacetruncated_error_pf13}
\|\theta_{h,\tau}^{T,n+1}\|_{L^2}\leq C_{E,h}\left(h^2+h\tau^2+h^2\tau\right).
\end{align}
Setting $\tau_0^{**}=\min\{\tau_0^*, \tau_4\}$, we have completed the proof of this lemma.

\end{proof}

\subsection{Proof of $L^2$-norm convergence}
Thanks to the inverse inequality, and using the results given in Lemma \ref{lem:time_error1} and Lemma \ref{lem:spacetruncated_error}, it holds
\begin{align}\label{eqn:spacetruncated_boundedness}
	\left\|u_{h, \tau}^{T, n}\right\|_{L^\infty}& \leq 
	\|u^n\|_{L^\infty}+
	\|u^n-u_{\tau}^{n}\|_{H^2}+
	\left\|u_{\tau}^{n}-R_hu_{\tau}^{n}\right\|_{H^2} 
	+ \left\|R_hu_{\tau}^{n}-u_{h, \tau}^{T, n}\right\|_{L^\infty}\nn\\
	&\leq \|u^n\|_{L^\infty}+C_{E,\tau}\tau^2+C_{E,h}h^2\|u_{\tau}^{n}\|_{H^2}+C_{E, h}h^{-1}(h^2+h\tau^2+h^2\tau)\leq K_0,
\end{align} 
provided $h\leq h_0$ and $\tau\leq \tau_0^{**}$ with positive constants $h_0$ and $\tau_0^{**}$. The $L^\infty$-norm boundedness of $u_{h, \tau}^{T, n}$ can guarantee $\mu_A(|u_{h, \tau}^{T, n}|^2)=|u_{h, \tau}^{T, n}|^2$, which further implies the equivalence of the schemes \eqref{eqn:fullydiscrete} and 
\eqref{eqn:conformingfullydiscrete_truncation}. Hence, $u_{h, \tau}^{T, n}=u_{h, \tau}^{n}$ holds. Therefore, from Lemma \ref{lem:spacetruncated_error}, we get
\begin{align}\label{eqn:L2pro_space_error}
	\sup_{0\leq n\leq N}\left\|R_hu_{\tau}^{n}-u_{h, \tau}^{n}\right\|	_{L^2}\leq C_{E, h}(h^2+h\tau^2+h^2\tau). 
	\end{align}
Then, using \eqref{eqn:ritz_error} and \eqref{eqn:L2pro_space_error}, we obtain 
\begin{align}\label{eqn:L2space_error}
\left\|u^{n}-u_{h, \tau}^{n}\right\|	_{L^2}
&\leq \left\|u^{n}-u_\tau^{n}\right\|_{L^2}+\left\|u_\tau^{n}-R_hu_\tau^{n}\right\|_{L^2}+\left\|R_hu_{\tau}^{n}-u_{h, \tau}^{n}\right\|_{L^2}\nonumber\\
&\leq C_{E, \tau}\tau^2+C_{R_h}C_{u,\tau}h^2+ C_{E, h}(h^2+h\tau^2+h^2\tau)
\leq C_{E}(h^2+\tau^2). 
	\end{align}
We have completed the proof of the optimal convergence in the sense of $L^2$-norm. 

\begin{rem}
It is noticed that the $L^2$-norm convergence rate is optimal without any time-space ratio restriction, and also without any requirements on the parameters. Furthermore, observed from the proof, the results provided in Lemma \ref{lem:time_error1} are sufficient to yield the convergence result \eqref{eqn:L2space_error}. Indeed, under the results of Lemma \ref{lem:time_error1}, \eqref{eqn:spacetruncated_error_pf5} can be modified by
	\begin{align}
		Re\left(D_\tau \rho_{h,\tau}^{T, n+\frac{1}{2}}, \hat \theta_{h,\tau}^{T, n+\frac{1}{2}}\right)\leq C\left(\|\theta_{h,\tau}^{n+1}\|_{L^2}^2+\|\theta_{h,\tau}^{n}\|_{L^2}^2\right)+Ch^2.
	\end{align}
Thus, \eqref{eqn:spacetruncated_error_pf13} will be updated by 
\begin{align}\label{eqn:spacetruncated_error_pf13_update}
\|\theta_{h,\tau}^{n+1}\|_{L^2}\leq C_{E,h}h.
\end{align}
Then the $L^\infty$-boundedness of $u_{h, \tau}^{T, n}$ can be derived similarly to \eqref{eqn:spacetruncated_boundedness}, but the upper bound will not be $K_0$. Additionally, if we consider the FE space in 3D, the $L^\infty$-boundedness of $u_{h, \tau}^{T, n}$ cannot be obtained by using the inverse inequality. Therefore, it is evident that the improved result presented in Lemma \ref{lem:time_error2} is a preferable choice.
\end{rem}

\subsection{Proof of $H^1$-norm high-order convergence}

In \cite{bao2013optimal}, the authors employed the finite difference method to tackle the model described by equation \eqref{eqn:model}. They conducted an analysis that yielded an error estimate in the $H^1$-norm, demonstrating a convergence rate of $O(h^{3/2}+\tau^{3/2})$. In Lemma \ref{lem:spacetruncated_error}, we established an error estimate in the $L^2$-norm with a optimal convergence rate of $O(h^{2})$. Consequently, this allowed us to achieve the optimal $H^1$-norm error estimate by using the inverse inequality, which exhibits a convergence rate of $O(h)$. In what follows, we will conduct a comprehensive analysis to establish high-order convergence in the $H^1$-norm.

Denoting $\omega_h=D_\tau \theta_{h,\tau}^{T, n+\frac{1}{2}}$ in \eqref{eqn:spacetruncated_error_pf3}, and selecting the real parts of the resulting, we get 
\begin{align}\label{eqn:super_pf1}
	&\frac{\|\nabla\theta_{h,\tau}^{T,n+1}\|_{L^2}^2-\|\nabla\theta_{h,\tau}^{T,n}\|_{L^2}^2}{4\tau}+\frac{\left(V\theta_{h,\tau}^{T,n+1}, \theta_{h,\tau}^{T,n+1}\right)-\left(V\theta_{h,\tau}^{T,n}, \theta_{h,\tau}^{T,n}\right)}{2\tau}
	=Im\left(D_\tau\rho_{h,\tau}^{T,n+\frac12}, D_\tau\theta_{h,\tau}^{T,n+\frac12}\right)
	-Re\left(V\hat{\rho}_{h,\tau}^{T,n+\frac12}, D_\tau \theta_{h,\tau}^{T, n+\frac{1}{2}}\right)\nn\\
	&~~~~~~~~~~~~~~~+\Omega Re\left(L_z\hat{\rho}_{h,\tau}^{T,n+\frac12}, D_\tau \theta_{h,\tau}^{T, n+\frac{1}{2}}\right)+\Omega Re\left(L_z\hat{\theta}_{h,\tau}^{T,n+\frac12}, D_\tau \theta_{h,\tau}^{T, n+\frac{1}{2}}\right)-Re\left(\mathcal N_{h,\tau}^{T, n+\frac{1}{2}}, D_\tau \theta_{h,\tau}^{T, n+\frac{1}{2}}\right).
\end{align}
Similar as \eqref{eqn:spacetruncated_error_pf5},  \eqref{eqn:spacetruncated_error_pf6} and \eqref{eqn:spacetruncated_error_pf7}, we have 
\begin{align}\label{eqn:super_pf2}
	&Im\left(D_\tau\rho_{h,\tau}^{T,n+\frac12}, D_\tau\theta_{h,\tau}^{T,n+\frac12}\right)
	-Re\left(V\hat{\rho}_{h,\tau}^{T,n+\frac12}, D_\tau \theta_{h,\tau}^{T, n+\frac{1}{2}}\right)+\Omega Re\left(L_z\hat{\rho}_{h,\tau}^{T,n+\frac12}, D_\tau \theta_{h,\tau}^{T, n+\frac{1}{2}}\right)\nonumber\\
	&\leq C\left\|D_\tau\theta_{h,\tau}^{T,n+\frac12}\right\|_{L^2}^2+C\left(h^4+h^4\tau^2+h^2\tau^4\right).
\end{align}
Meanwhile, similar as \eqref{eqn:spacetruncated_error_pf11}, it follows that 
\begin{align}\label{eqn:super_pf3}
	Re\left(\mathcal N_{h,\tau}^{T, n+\frac{1}{2}}, D_\tau \theta_{h,\tau}^{T, n+\frac{1}{2}}\right)
	\leq C\left\|D_\tau\theta_{h,\tau}^{T,n+\frac12}\right\|_{L^2}^2+Ch^4.
\end{align}
Substituting \eqref{eqn:super_pf2} and \eqref{eqn:super_pf3} into \eqref{eqn:super_pf1} obtains 
\begin{align}\label{eqn:super_pf4}
	&\frac{\|\nabla\theta_{h,\tau}^{T,n+1}\|_{L^2}^2-\|\nabla\theta_{h,\tau}^{T,n}\|_{L^2}^2}{4\tau}+\frac{\left(V\theta_{h,\tau}^{T,n+1}, \theta_{h,\tau}^{T,n+1}\right)-\left(V\theta_{h,\tau}^{T,n}, \theta_{h,\tau}^{T,n}\right)}{2\tau}\nn\\
	&\leq C\left(\|\nabla\theta_{h,\tau}^{T,n+1}\|_{L^2}^2+\|\nabla\theta_{h,\tau}^{T,n}\|_{L^2}^2+\|D_\tau\theta_{h,\tau}^{T,n+\frac12}\|_{L^2}^2\right)+C\left(h^4+h^4\tau^2+h^2\tau^4\right).
\end{align}

On the other hand, we take the difference of \eqref{eqn:spacetruncated_error_pf3} between two consecutive steps to arrive at 
\begin{align}\label{eqn:super_pf5}
	&i\left(\frac{\theta_{h,\tau}^{T,n+1}-2\theta_{h,\tau}^{T,n}+\theta_{h,\tau}^{T,n-1}}{\tau^2}, \omega_h\right)
	=-i\left(\frac{\rho_{h,\tau}^{T,n+1}-2\rho_{h,\tau}^{T,n}+\rho_{h,\tau}^{T,n-1}}{\tau^2}, \omega_h\right)
	+\left(\frac{\nabla\theta_{h,\tau}^{T,n+1}-\nabla\theta_{h,\tau}^{T,n-1}}{4\tau}, \nabla\omega_h\right)+\left(\frac{V\rho_{h,\tau}^{T,n+1}-V\rho_{h,\tau}^{T,n-1}}{2\tau}, \omega_h\right)\nn\\
	&+\left(\frac{V\theta_{h,\tau}^{T,n+1}-V\theta_{h,\tau}^{T,n-1}}{2\tau}, \omega_h\right)
	-\Omega\left(\frac{L_z\rho_{h,\tau}^{T,n+1}-L_z\rho_{h,\tau}^{T,n-1}}{2\tau}, \omega_h\right)
	-\Omega\left(\frac{L_z\theta_{h,\tau}^{T,n+1}-L_z\theta_{h,\tau}^{T,n-1}}{2\tau}, \omega_h\right)
	+\left(\frac{\mathcal N_{h,\tau}^{T, n+\frac{1}{2}}-\mathcal N_{h,\tau}^{T, n-\frac{1}{2}}}{\tau}, \omega_h\right).
\end{align}
For simplicity, we denote $\theta_{h,\tau}^{T,n+1}-\theta_{h,\tau}^{T,n}=\tau H_{h,\tau}^{T,n+1}$. Then we rewrite \eqref{eqn:super_pf5} as 
\begin{align}\label{eqn:super_pf6}
	&i\left(D_\tau H_{h,\tau}^{T,n+\frac12}, \omega_h\right)
	=-i\left(\frac{\rho_{h,\tau}^{T,n+1}-2\rho_{h,\tau}^{T,n}+\rho_{h,\tau}^{T,n-1}}{\tau^2}, \omega_h\right)
	+\frac12\left(\nabla\hat{H}_{h,\tau}^{T,n+\frac12}, \nabla\omega_h\right)+\left(\frac{V\rho_{h,\tau}^{T,n+1}-V\rho_{h,\tau}^{T,n-1}}{2\tau}, \omega_h\right)+\left(V\hat{H}_{h,\tau}^{T,n+\frac12}, \omega_h\right)\nn\\
	&~~~~~~~~~~~~
	-\Omega\left(\frac{L_z\rho_{h,\tau}^{T,n+1}-L_z\rho_{h,\tau}^{T,n-1}}{2\tau}, \omega_h\right)
	-\Omega\left(L_z\hat{H}_{h,\tau}^{T,n+\frac12}, \omega_h\right)
	+\left(\frac{\mathcal N_{h,\tau}^{T, n+\frac{1}{2}}-\mathcal N_{h,\tau}^{T, n-\frac{1}{2}}}{\tau}, \omega_h\right).
\end{align}
Let $\omega_h=\hat{H}_{h,\tau}^{T,n+\frac12}$ in \eqref{eqn:super_pf6}, and take the imaginary part of the resulting, then we get 
\begin{align}\label{eqn:super_pf7}
	\frac{\|H_{h,\tau}^{T,n+1}\|_{L^2}^2-\|H_{h,\tau}^{T,n}\|_{L^2}^2}{2\tau}=&-Re\left(\frac{\rho_{h,\tau}^{T,n+1}-2\rho_{h,\tau}^{T,n}+\rho_{h,\tau}^{T,n-1}}{\tau^2}, \hat{H}_{h,\tau}^{T,n+\frac12}\right)+Im\left(\frac{V\rho_{h,\tau}^{T,n+1}-V\rho_{h,\tau}^{T,n-1}}{2\tau}, \hat{H}_{h,\tau}^{T,n+\frac12}\right)\nn\\
	&-\Omega\left(\frac{L_z\rho_{h,\tau}^{T,n+1}-L_z\rho_{h,\tau}^{T,n-1}}{2\tau}, \hat{H}_{h,\tau}^{T,n+\frac12}\right)+\left(\frac{\mathcal N_{h,\tau}^{T, n+\frac{1}{2}}-\mathcal N_{h,\tau}^{T, n-\frac{1}{2}}}{\tau}, \hat{H}_{h,\tau}^{T,n+\frac12}\right).
\end{align}
By using Lemma \ref{lem:time_error2}, we can easily obtain 
\begin{align}\label{eqn:super_pf8}
	Re\left(\frac{\rho_{h,\tau}^{T,n+1}-2\rho_{h,\tau}^{T,n}+\rho_{h,\tau}^{T,n-1}}{\tau^2}, \hat{H}_{h,\tau}^{T,n+\frac12}\right)+Im\left(\frac{V\rho_{h,\tau}^{T,n+1}-V\rho_{h,\tau}^{T,n-1}}{2\tau}, \hat{H}_{h,\tau}^{T,n+\frac12}\right)
	\leq C\left(\|H_{h,\tau}^{T,n+1}\|_{L^2}^2+\|H_{h,\tau}^{T,n}\|_{L^2}^2\right)+C\left(h^4+h^4\tau^2\right). 
\end{align}
 Thanks to  
\begin{align}\label{eqn:super_pf9}
	\frac{L_z\rho_{h,\tau}^{T,n+1}-L_z\rho_{h,\tau}^{T,n-1}}{2\tau}
	&=\left[L_z\left(\frac{u^{n+1}-u^{n-1}}{2\tau}\right)
	-L_zR_h\left(\frac{u^{n+1}-u^{n-1}}{2\tau}\right)\right]
	-
	\left[L_z\left(\frac{e_\tau^{n+1}-e_\tau^{n-1}}{2\tau}\right)
	-L_zR_h\left(\frac{e_\tau^{n+1}-e_\tau^{n-1}}{2\tau}\right)\right],\end{align}
and by using \eqref{eqn:ritz_error} and Lemma \ref{lem:time_error2}, we can get 
\begin{align}\label{eqn:super_pf10}
	\left\|\frac{L_z\rho_{h,\tau}^{T,n+1}-L_z\rho_{h,\tau}^{T,n-1}}{2\tau}\right\|_{L^2}\leq C_{R_h}
h^2+C_{R_h}h\left\|\frac{e_\tau^{n+1}-e_\tau^{n-1}}{2\tau}\right\|_{H^2}\leq C\left(h^2+h\tau\right),
\end{align}
which further implies 
\begin{align}\label{eqn:super_pf11}
	-\Omega\left(\frac{L_z\rho_{h,\tau}^{T,n+1}-L_z\rho_{h,\tau}^{T,n-1}}{2\tau}, \hat{H}_{h,\tau}^{T,n+\frac12}\right)\leq C\left(\|H_{h,\tau}^{T,n+1}\|_{L^2}^2+\|H_{h,\tau}^{T,n}\|_{L^2}^2\right)+C\left(h^4+h^2\tau^2\right). 
\end{align}
Moreover, using similar methods as \eqref{eqn:time_error2_pf12}, we have 
\begin{align}\label{eqn:super_pf12}
	\left(\frac{\mathcal N_{h,\tau}^{T, n+\frac{1}{2}}-\mathcal N_{h,\tau}^{T, n-\frac{1}{2}}}{\tau}, \hat{H}_{h,\tau}^{T,n+\frac12}\right)
	\leq C\left(\|H_{h,\tau}^{T,n+1}\|_{L^2}^2+\|H_{h,\tau}^{T,n}\|_{L^2}^2\right)+C\left(h^4+h^2\tau^4+h^4\tau^2\right). 
\end{align}
Hence, substituting \eqref{eqn:super_pf8}, \eqref{eqn:super_pf11} and \eqref{eqn:super_pf12} into \eqref{eqn:super_pf7},  we have 
\begin{align}\label{eqn:super_pf13}
\frac{\|H_{h,\tau}^{T,n+1}\|_{L^2}^2-\|H_{h,\tau}^{T,n}\|_{L^2}^2}{2\tau}\leq C\left(\|H_{h,\tau}^{T,n+1}\|_{L^2}^2+\|H_{h,\tau}^{T,n}\|_{L^2}^2\right)+C\left(h^4+\tau^4\right).
\end{align}
By using discrete Gronwall's inequality, \eqref{eqn:super_pf13} further implies that there exists a positive constant $\tau_5$, such that when $\tau\leq \tau_5$, there holds 
\begin{align}\label{eqn:super_pf14}
	\|H_{h,\tau}^{T,n+1}\|_{L^2} = \|D_\tau\theta_{h,\tau}^{T,n+\frac12}\|_{L^2} \leq C(h^2+\tau^2). 
\end{align}
Therefore, from \eqref{eqn:super_pf14} and \eqref{eqn:super_pf4}, we have 
\begin{align}\label{eqn:super_pf15}
	&\frac{\|\nabla\theta_{h,\tau}^{T,n+1}\|_{L^2}^2-\|\nabla\theta_{h,\tau}^{T,n}\|_{L^2}^2}{4\tau}+\frac{\left(V\theta_{h,\tau}^{T,n+1}, \theta_{h,\tau}^{T,n+1}\right)-\left(V\theta_{h,\tau}^{T,n}, \theta_{h,\tau}^{T,n}\right)}{2\tau}\leq C\left(\|\nabla\theta_{h,\tau}^{T,n+1}\|_{L^2}^2+\|\nabla\theta_{h,\tau}^{T,n}\|_{L^2}^2\right)+C\left(h^4+\tau^4\right).
\end{align}
Then by using discrete Gronwall's inequality again, we can get 
\begin{align}\label{eqn:super_pf16}
	\|\nabla\theta_{h,\tau}^{n+1}\|_{L^2}=\|\nabla\theta_{h,\tau}^{T,n+1}\|_{L^2}\leq C_E(h^2+\tau^2),
\end{align}
provided that $\tau\leq \tau_6$ with a positive constant $\tau_6$ independent of $h$ and $\tau$.

Finally, we have 
\begin{align}
	\left\|\nabla(I_hu^{n+1}-u_{\tau,h}^n)\right\|_{L^2}
	\leq \left\|\nabla(I_hu^{n+1}-R_hu^{n+1})\right\|_{L^2}+\left\|\nabla(R_hu^{n+1}-R_hu_{\tau}^{n+1})\right\|_{L^2}+\left\|\nabla(R_hu_\tau^{n+1}-u_{\tau,h}^{n+1})\right\|_{L^2}\leq C_E(h^2+\tau^2),
\end{align}
and based on interpolated postprocessing technique, we can obtain 
\begin{align}
\left\|u^{n+1}-I_{2h}U_h^{n+1}\right\|_{H^1}\leq C_E(h^2+\tau^2),
\end{align}
where $I_{2h}$ is the interpolated postprocessing operator \cite{lin2007finite}.

Setting $\tau_0^{***}=\min\{\tau_0^{**}, \tau_5, \tau_6\}$, we have completed the proof.

\begin{rem}
When in the nonconforming FE space, we do not need to provide a complete error analysis, as most of it is similar to the conforming case. The brief proof is shown in Appendix. 
\end{rem}

\section{Numerical results}\label{sec5}
In this section, we first verify the accuracy of the conforming and nonconforming FEMs shown in Theorem \ref{thm:main}. 
Then, we check the discrete mass and energy conservations of the both schemes. Finally, we show some experiments about the 
dynamics of vortex lattice in rotating BEC. 

\subsection{Accuracy test}

To test the accuracy of our numerical methods in 2D, we add a source term at the right-hand side of the model \eqref{eqn:model} with the exact solution 
$u(x, y, t)=(t+1)^2\sin\pi x\sin\pi y$, and take $U=[0, 1]\times [0, 1]$, $\Omega=0.8$, $\beta=1$, $\gamma_x=1$ and $\gamma_y=2$. For convenience, we select $h_x=h_y=h=\tau$, with $h_x$ and $h_y$ denoting the spatial step sizes in $x$ and $y$ directions, respectively. We numerically verify the optimal error estimates in the $L^2$-norm and $H^1$-norm, as well as the high-order error estimates in $H^1$-norm, using both the conforming and nonconforming FEMs proposed in Definition \ref{Def:fully}. The corresponding results are illustrated in Figure \ref{figure:error}, and they align with the theoretical analysis presented in Theorem \ref{thm:main}.

\begin{figure}[!htp]
\centering
\includegraphics[width=0.45\textwidth]{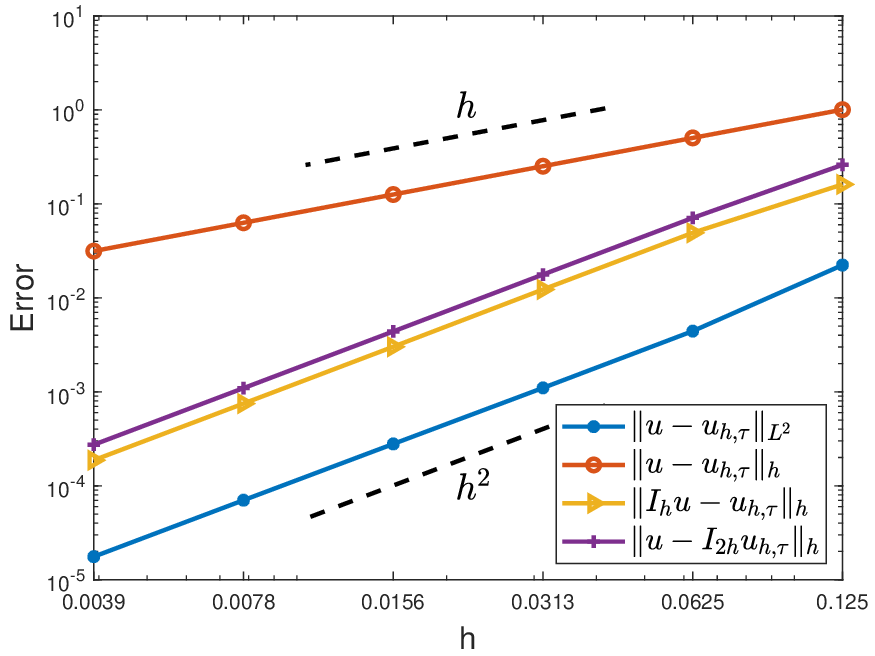}
\includegraphics[width=0.45\textwidth]{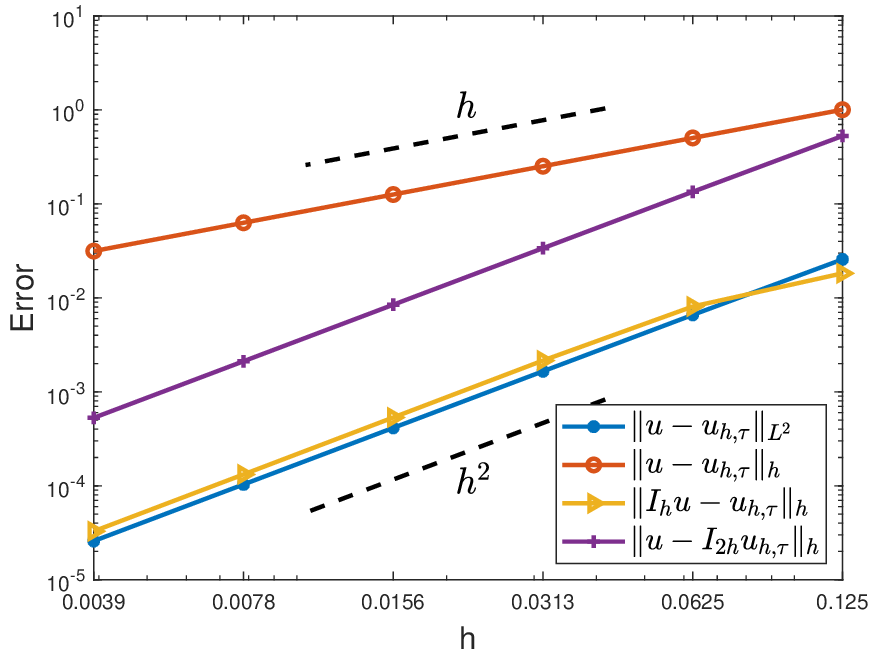}
\caption{The error estimates and convergence rates for the conforming (left) and nonconforming (right) FEMs.}
\label{figure:error}
\end{figure}

\subsection{Structure-preservation test}
In this subsection, we test the structure-preserving properties proposed in Theorem \ref{thm-fullydiscrete-conservation}, including the mass conservation and energy conservation in the discrete senses. We define the relative errors of the discrete mass and discrete energy by 
\begin{align}
	ER(N(u_{h,\tau}^n)) = \frac{N(u_{h,\tau}^n)-N(u_{h,\tau}^0)}{N(u_{h,\tau}^0)},\qquad 
	ER(E_h(u_{h,\tau}^n)) = \frac{E_h(u_{h,\tau}^n)-N(u_{h,\tau}^0)}{N(u_{h,\tau}^0)}.
\end{align}
The numerical results, as showcased in Figures \ref{figure:massQ1}-\ref{figure:energyEQ1}, are consistent with the theoretical conclusions that we have proved.

\begin{figure}[!htp]
\centering
\includegraphics[width=0.45\textwidth]{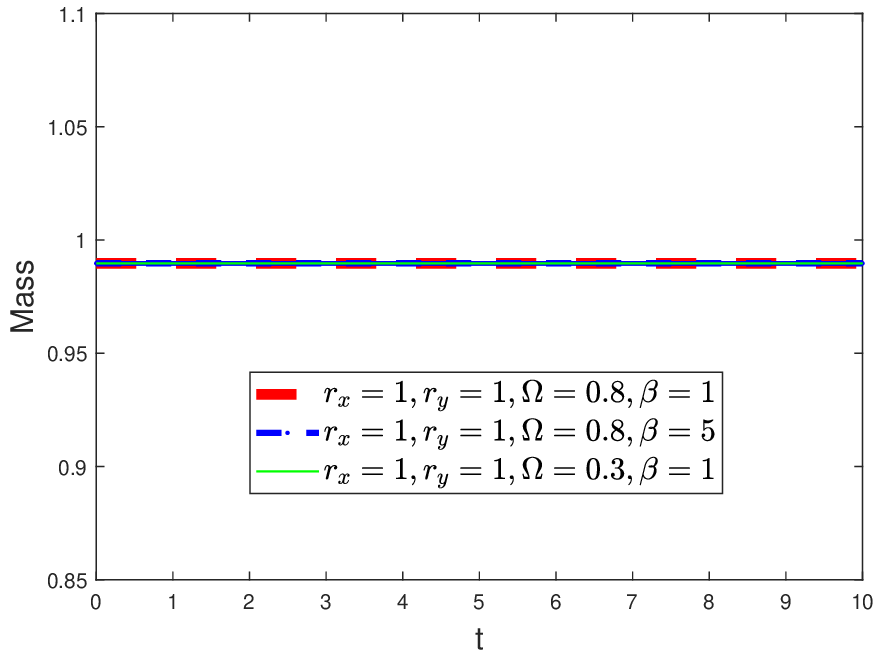}
\includegraphics[width=0.45\textwidth]{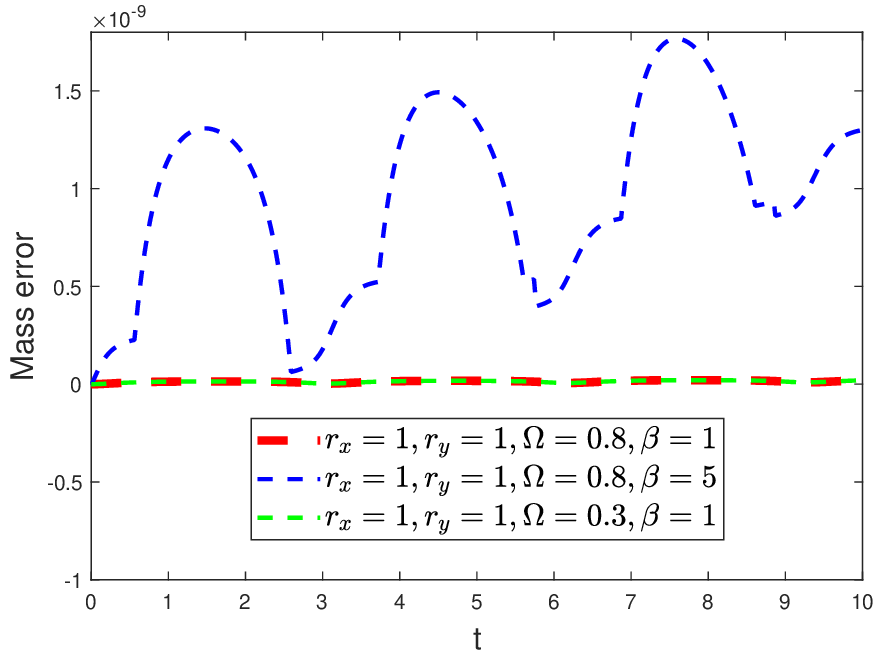}
\caption{The discrete mass and its relative error for the conforming FEM.}
\label{figure:massQ1}
\end{figure}

\begin{figure}[!htp]
\centering
\includegraphics[width=0.45\textwidth]{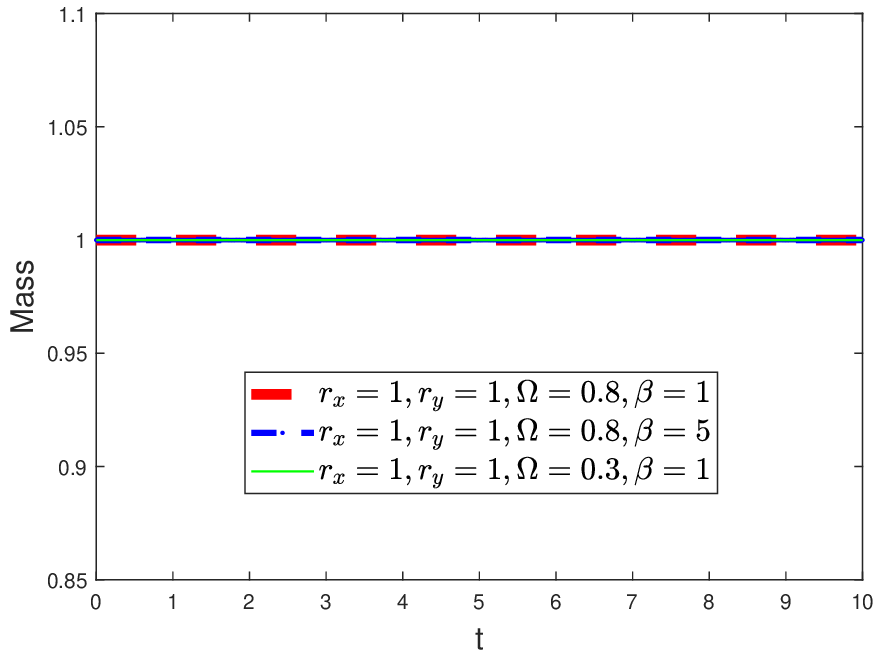}
\includegraphics[width=0.45\textwidth]{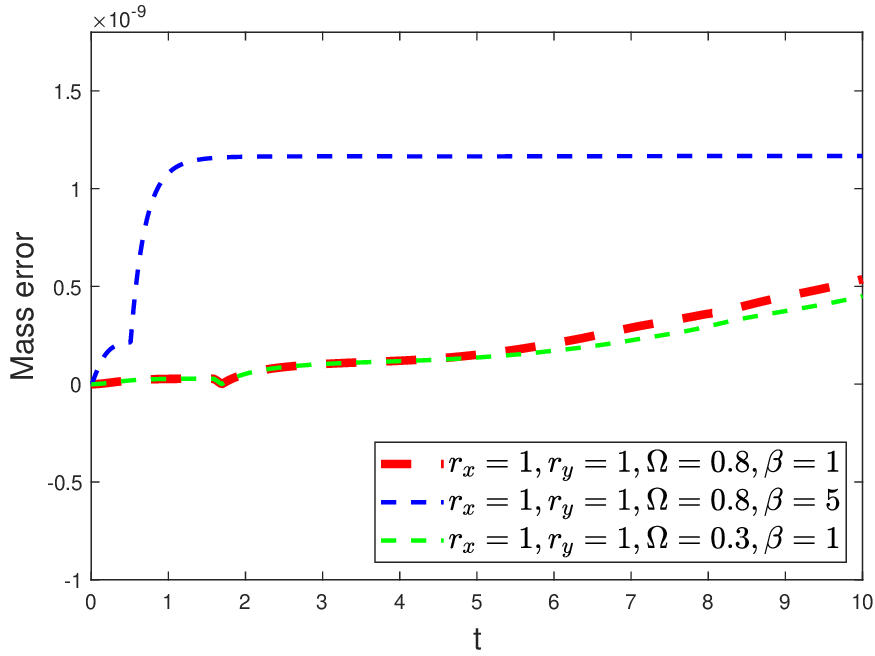}
\caption{The discrete mass and its relative error for the nonconforming FEM.}
\label{figure:massEQ1}
\end{figure}

\begin{figure}[!htp]
\centering
\includegraphics[width=0.45\textwidth]{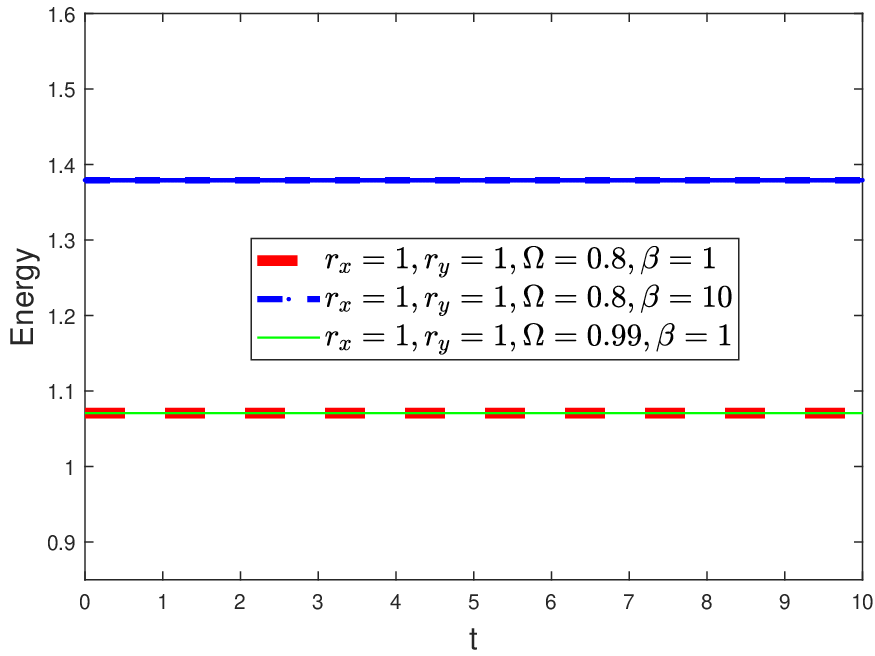}
\includegraphics[width=0.45\textwidth]{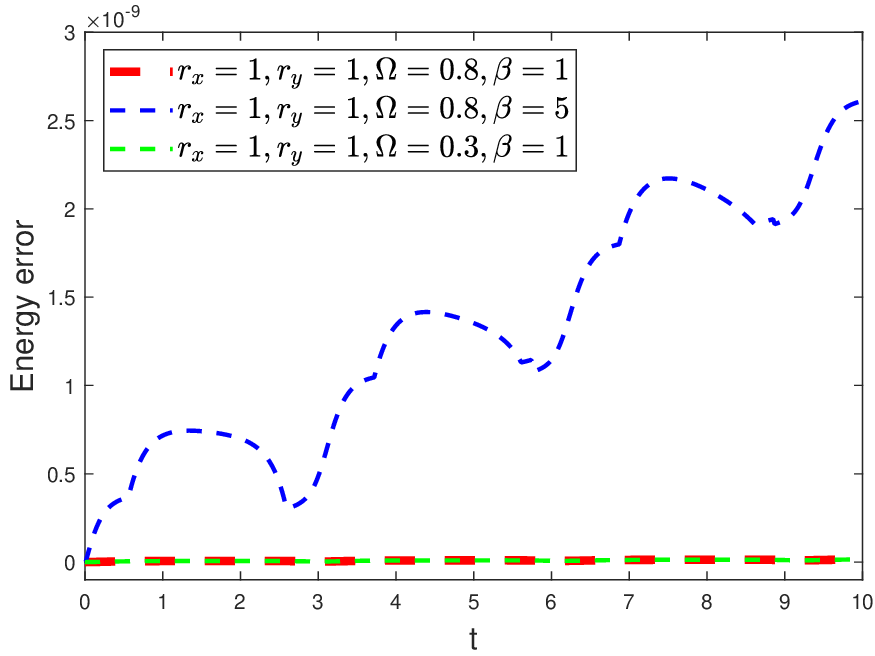}
\caption{The discrete energy and its relative error for the conforming FEM.}
\label{figure:energyQ1}
\end{figure}

\begin{figure}[!htp]
\centering
\includegraphics[width=0.45\textwidth]{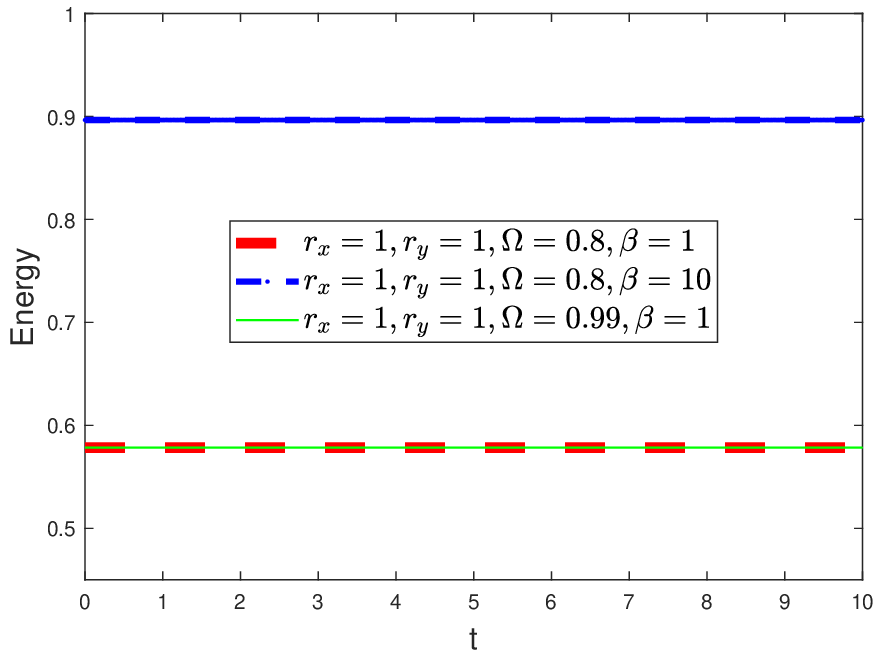}
\includegraphics[width=0.45\textwidth]{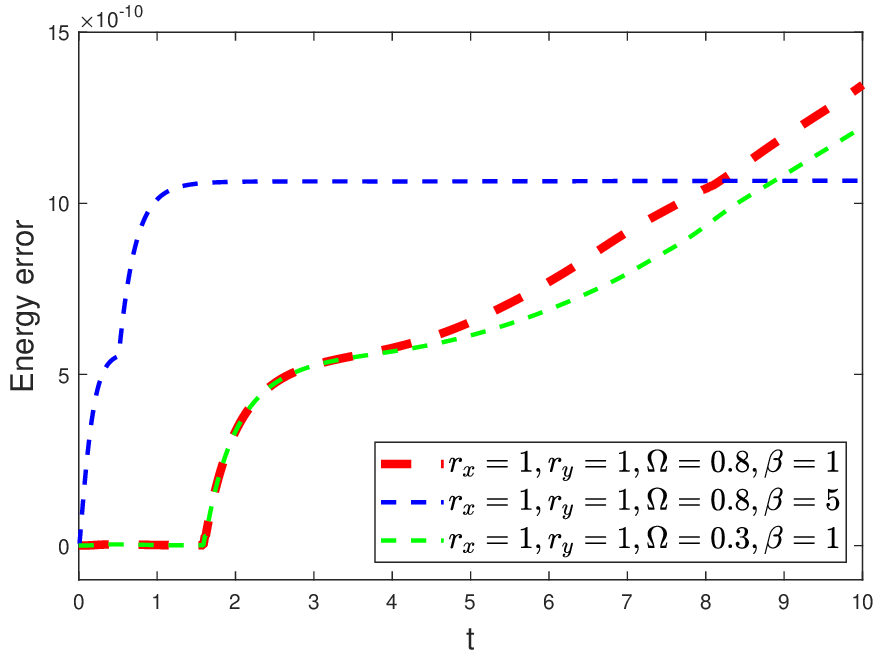}
\caption{The discrete energy and its relative error for the nonconforming FEM.}
\label{figure:energyEQ1}
\end{figure}

\subsection{Dynamics of a vortex lattice in rotating BEC}
We in this subsection numerically study the dynamics of a vortex lattice in rotating BEC influenced by the trap frequencies. We consider the model \eqref{eqn:model} with the parameters $\beta=100$ and $\Omega=0.99$, and the the domain $U=[-16, 16]\times[-16, 16]$. The initial data is selected as the ground state of the system with $\gamma_x=\gamma_y=1$, numerically computed by using the normalized gradient flow proposed in \cite{bao2005ground}. In the subsequent experiments, we will exclusively employ the conforming FEM. 
We mainly do the following tests: 
\begin{itemize}
\item [(I)]	The first test is to check the free expansion of the quantized vortex lattice when the trap is removed, i.e., $\gamma_x=\gamma_y=0$. From Figure \ref{figure:BEC1}, it is noticeable that the vortex lattice undergoes expansion over time in the absence of any trapping influence during the numerical test, but the vortex structure still keeps the rotational symmetry during the expansion. As a comparison, we also present the dynamics of vortex lattices for the case of $\gamma_x=\gamma_y=1$, see Figure \ref{figure:BEC2}.
\begin{figure}[!htp]
\centering
\includegraphics[width=0.22\textwidth]{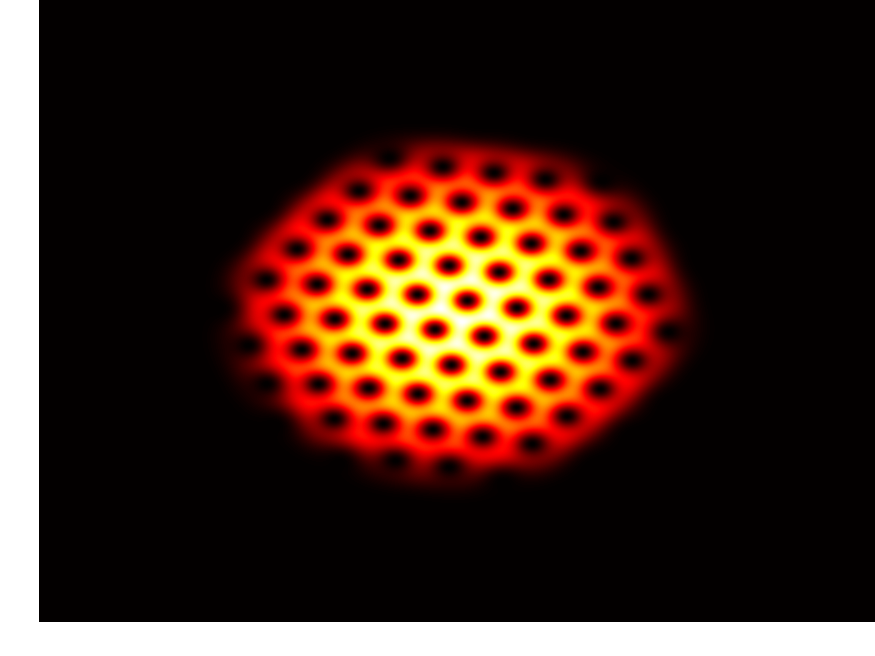}
\includegraphics[width=0.22\textwidth]{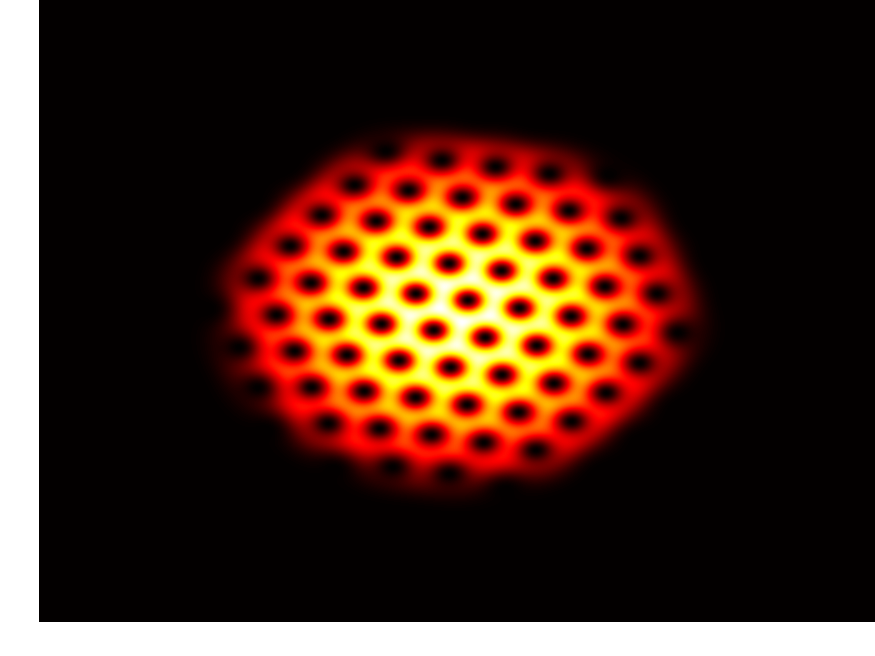}
\includegraphics[width=0.22\textwidth]{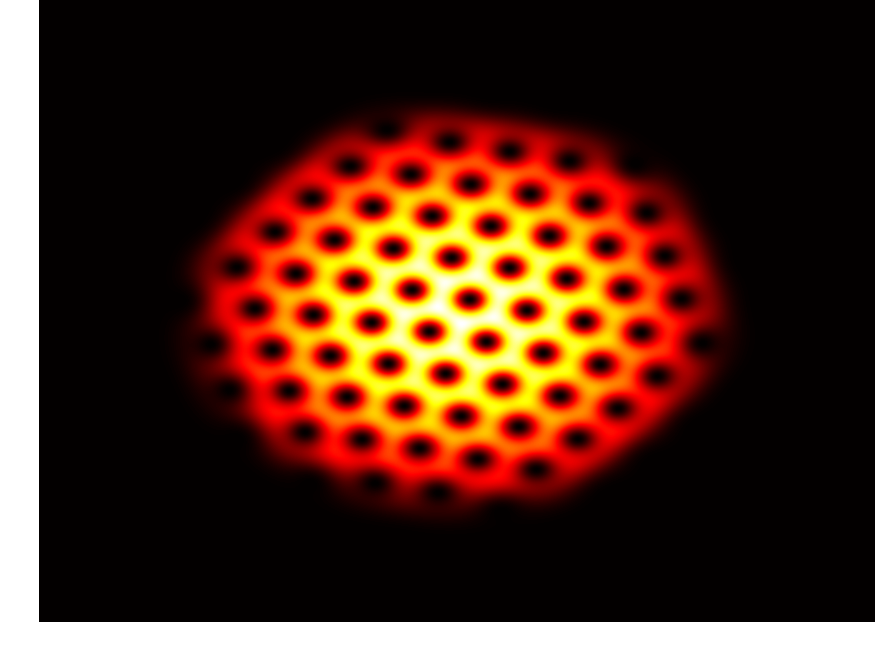}
\includegraphics[width=0.22\textwidth]{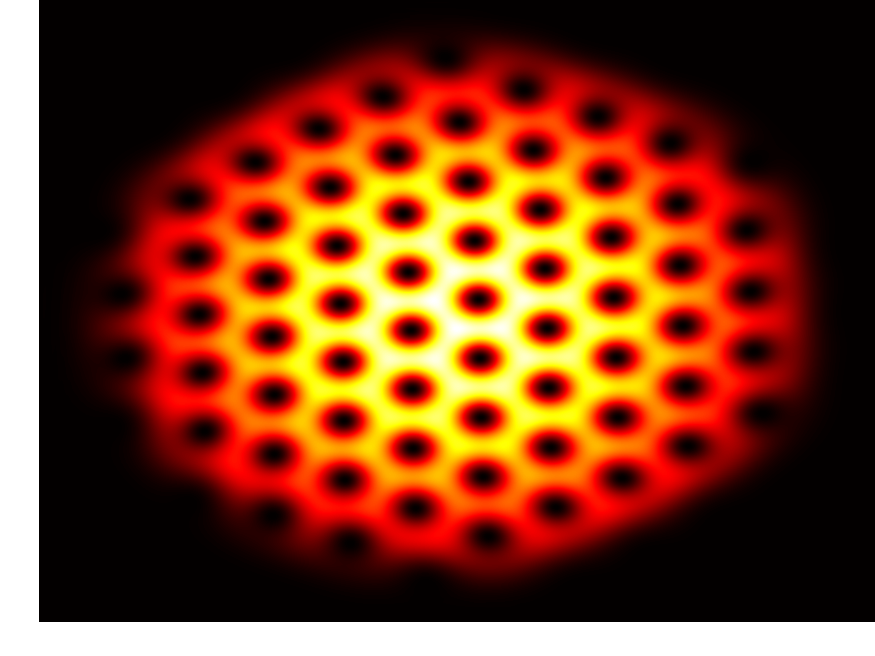}
\caption{Image plots of the density $|u|^2$ at the times $t=0, 0.3, 0.6$ and $1.2$. Here, we select $\gamma_x=\gamma_y=0$, $h=1/16$ and $\tau=0.01$.}
\label{figure:BEC1}
\end{figure}

\begin{figure}[!htp]
\centering
\includegraphics[width=0.22\textwidth]{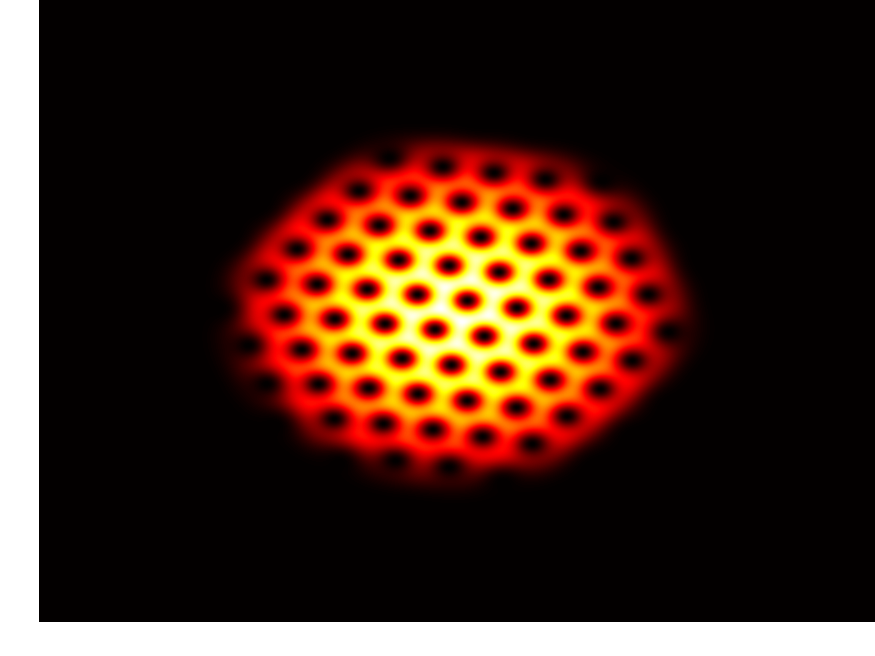}
\includegraphics[width=0.22\textwidth]{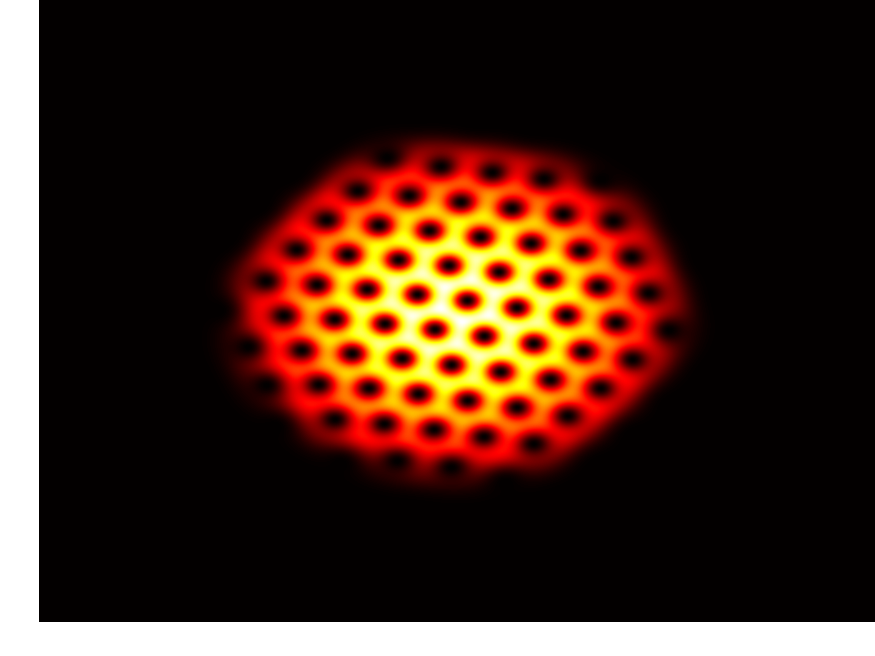}
\includegraphics[width=0.22\textwidth]{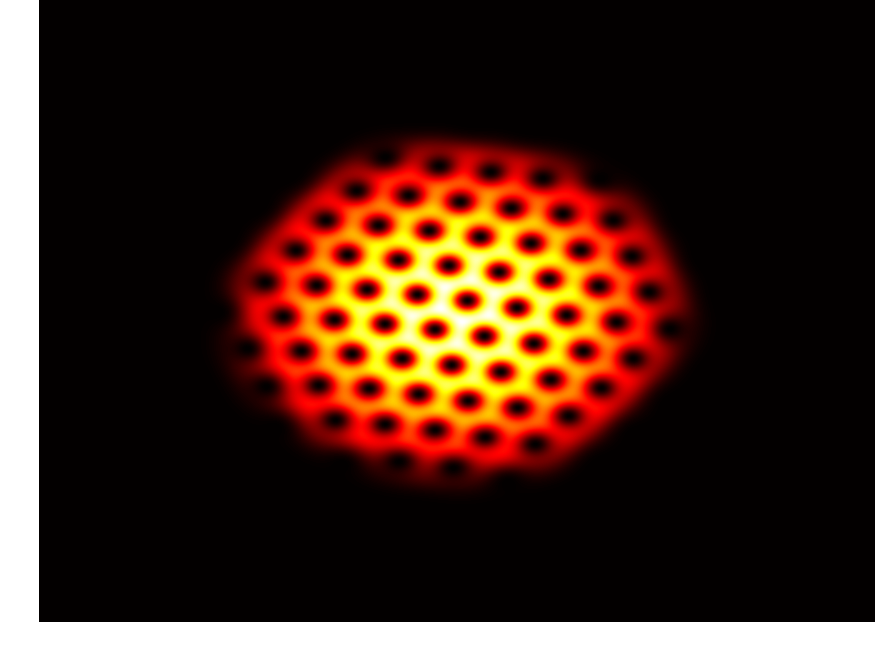}
\includegraphics[width=0.22\textwidth]{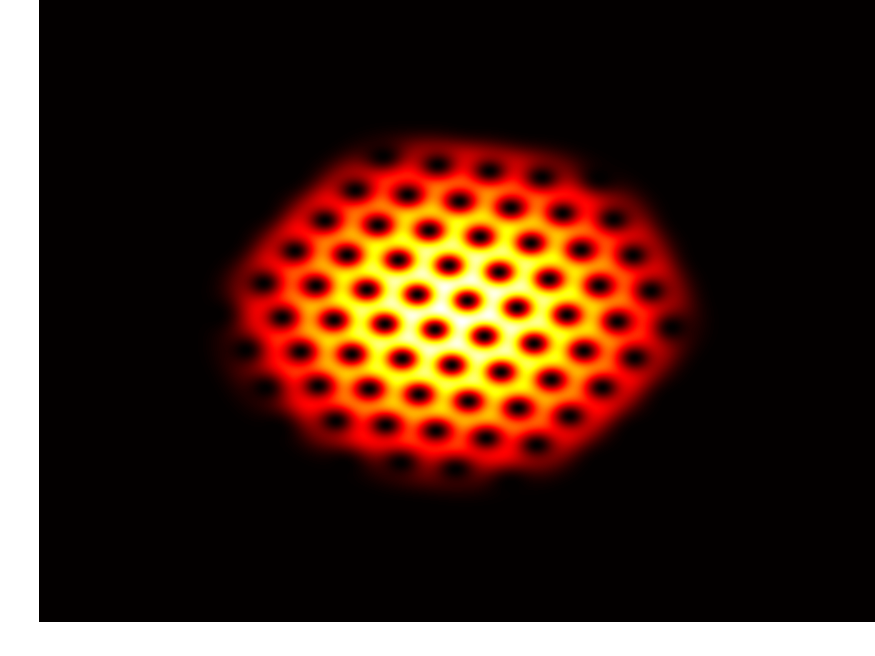}
\caption{Image plots of the density $|u|^2$ at the times $t=0, 0.75, 1.5$ and $3$. Here, we select $\gamma_x=\gamma_y=1$, $h=1/16$ and $\tau=0.01$.}
\label{figure:BEC2}
\end{figure}

\item [(II)]	Another test focuses on investigating the dynamics of the quantized vortex lattice as the trap frequencies undergo variations. We change the frequencies in $y$-direction only, in $x$-direction only, or in both $x$- and $y$-directions, see the image plots of the density $|u|^2$ with different cases in Figure \ref{figure:BEC3}. 

 \begin{figure}[!htp]
\centering
\includegraphics[width=0.22\textwidth]{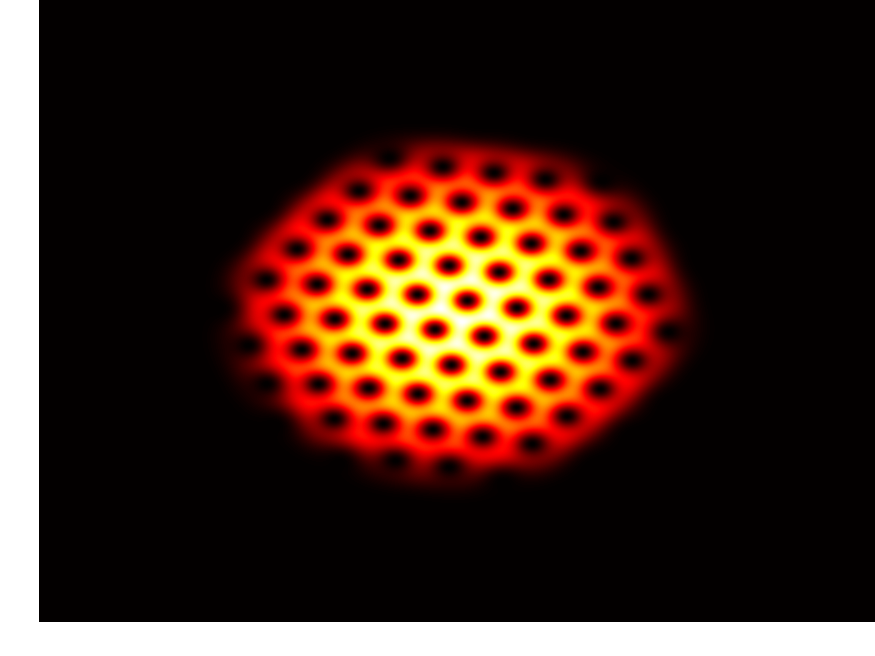}
\includegraphics[width=0.22\textwidth]{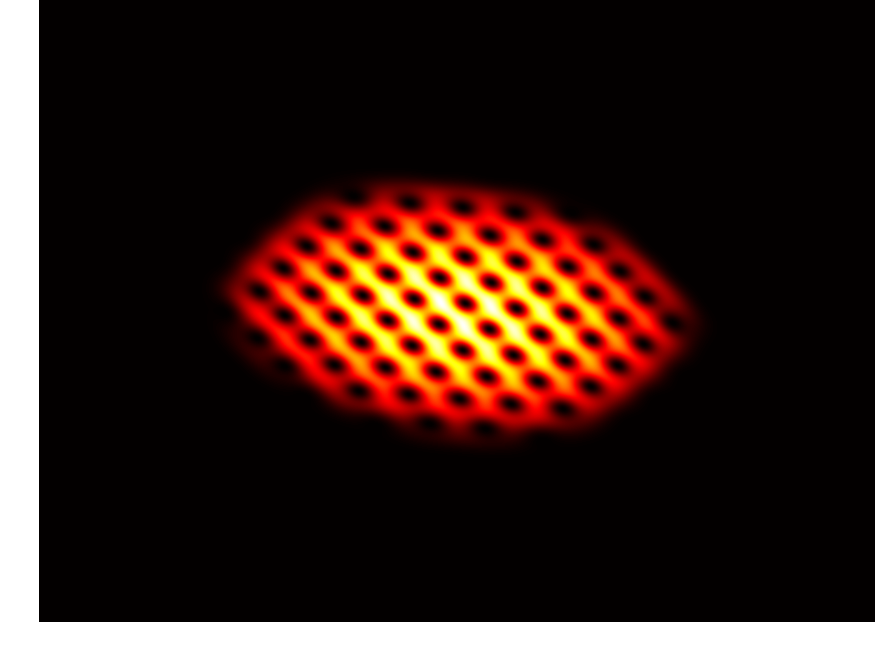}
\includegraphics[width=0.22\textwidth]{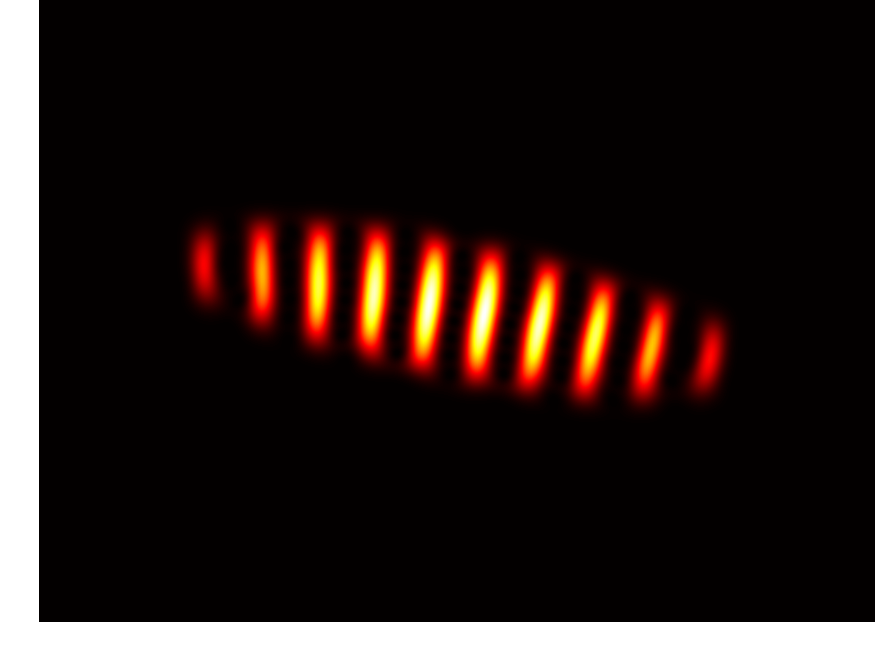}
\includegraphics[width=0.22\textwidth]{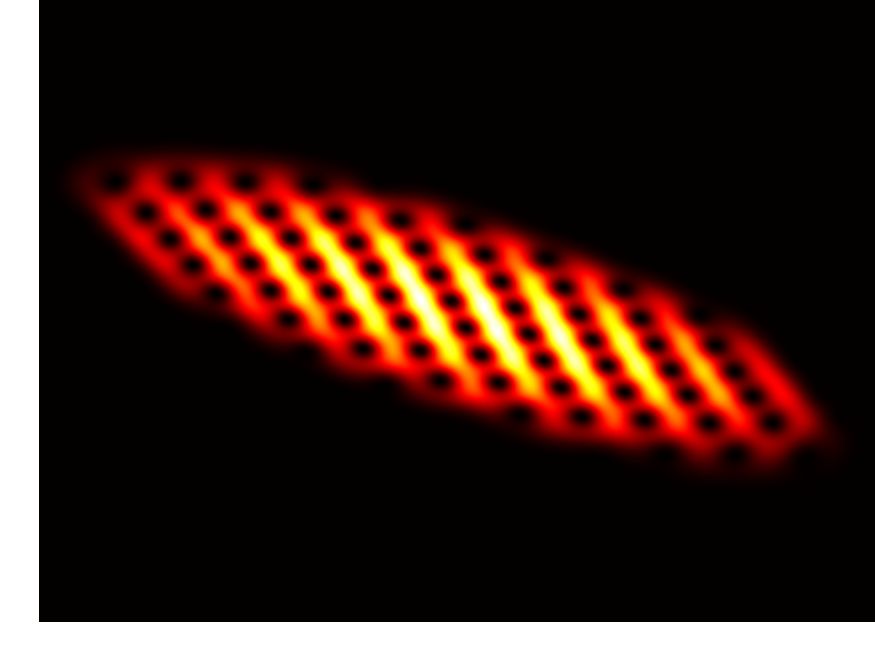}
\includegraphics[width=0.22\textwidth]{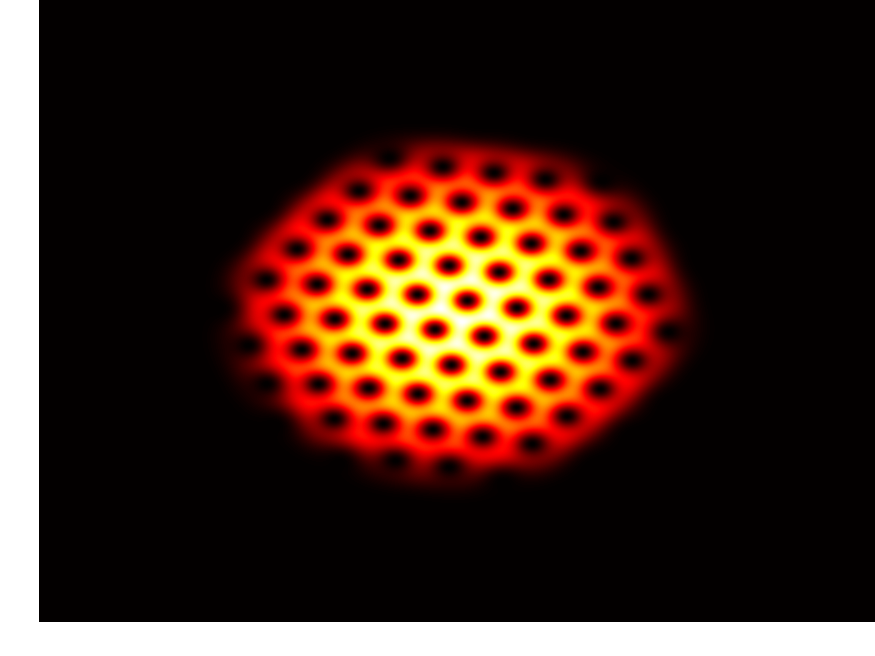}
\includegraphics[width=0.22\textwidth]{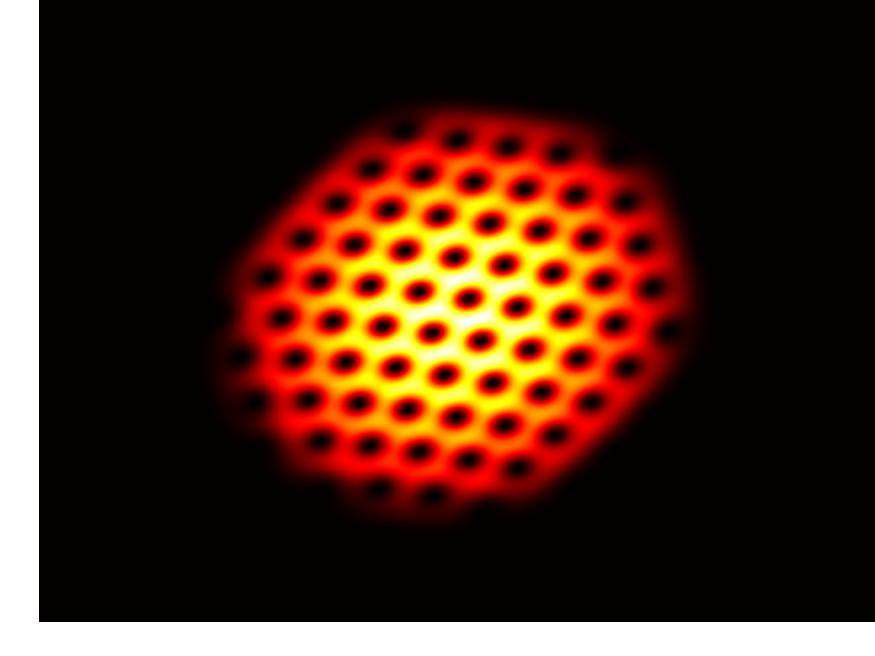}
\includegraphics[width=0.22\textwidth]{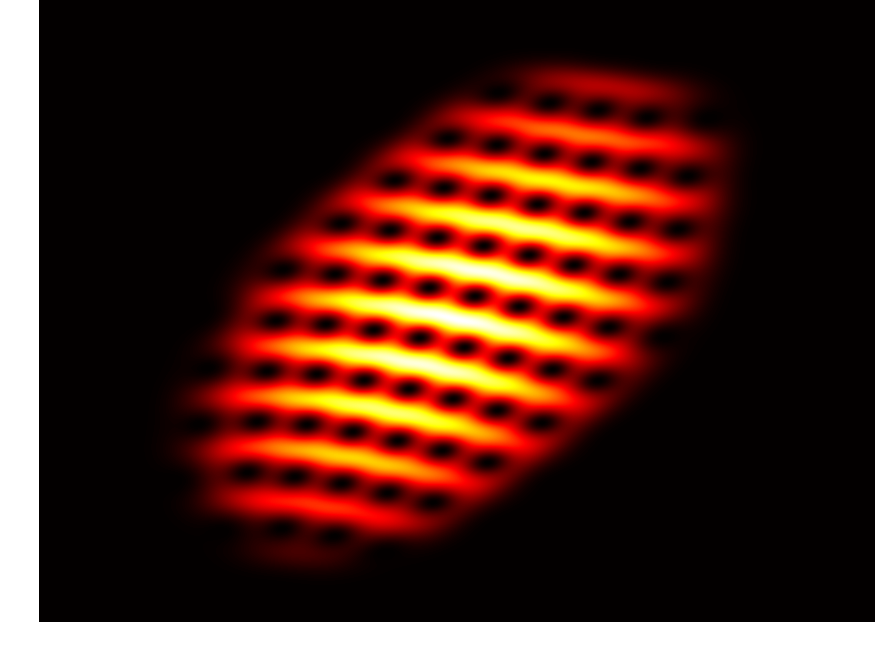}
\includegraphics[width=0.22\textwidth]{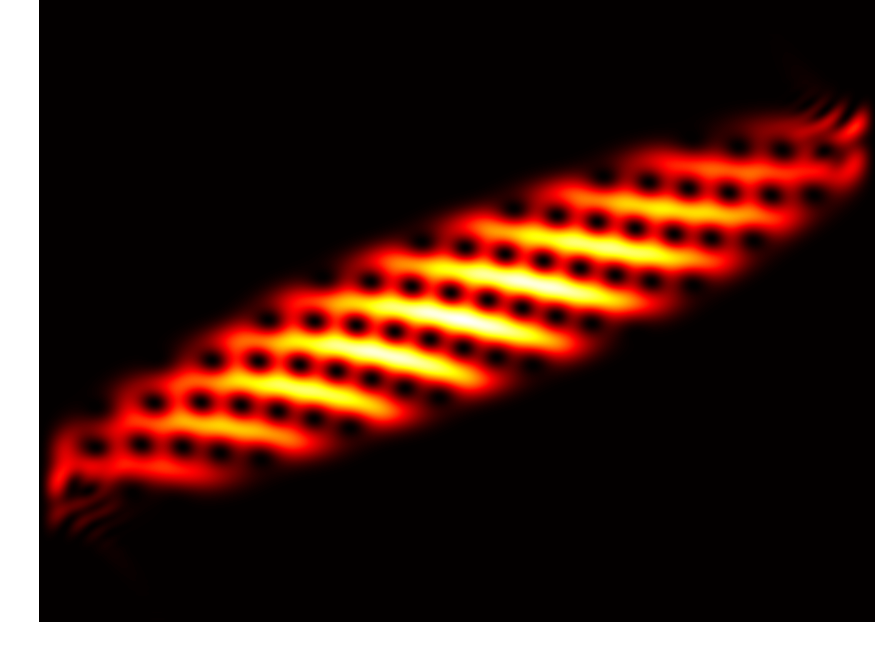}
\includegraphics[width=0.22\textwidth]{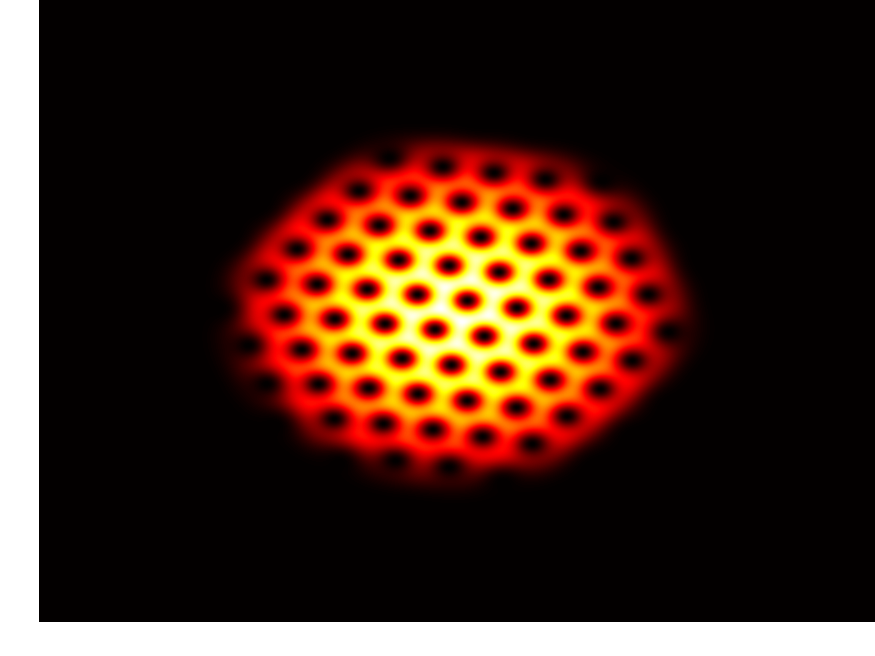}
\includegraphics[width=0.22\textwidth]{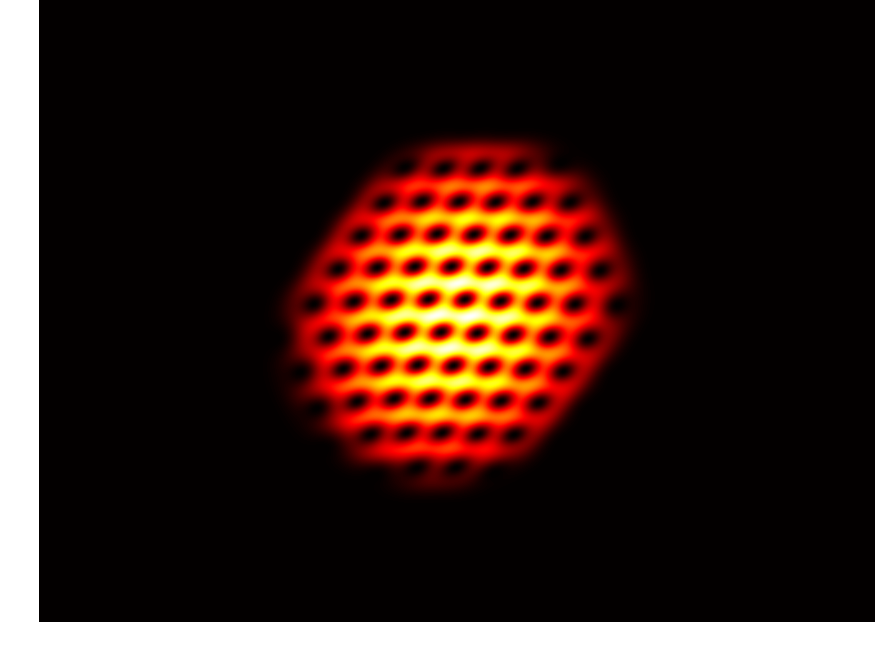}
\includegraphics[width=0.22\textwidth]{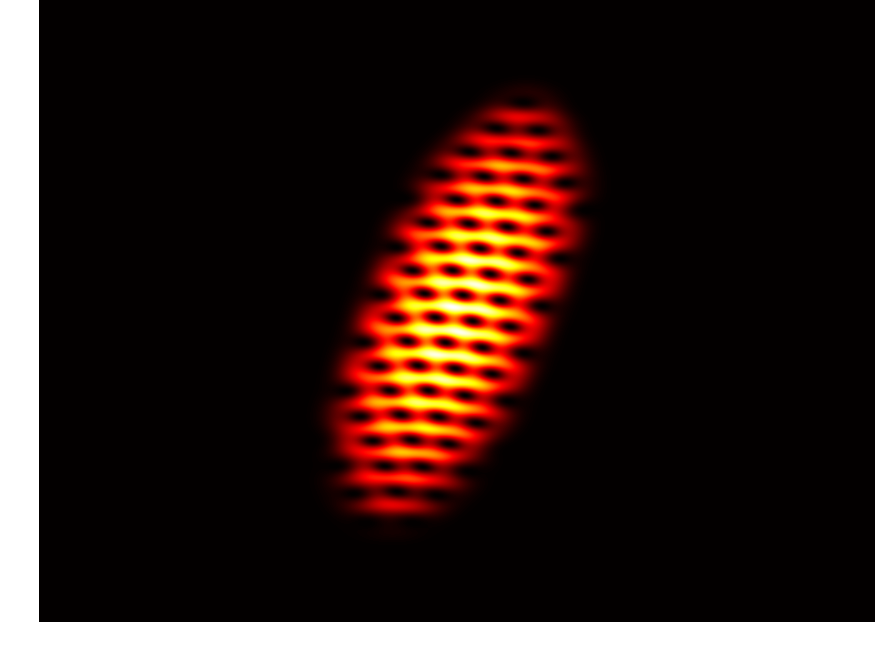}
\includegraphics[width=0.22\textwidth]{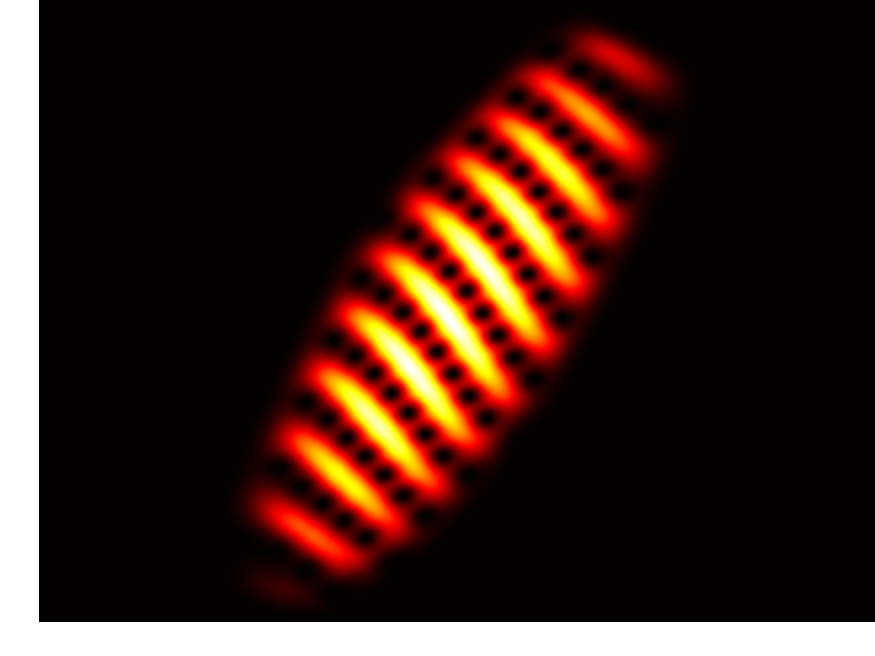}
\includegraphics[width=0.22\textwidth]{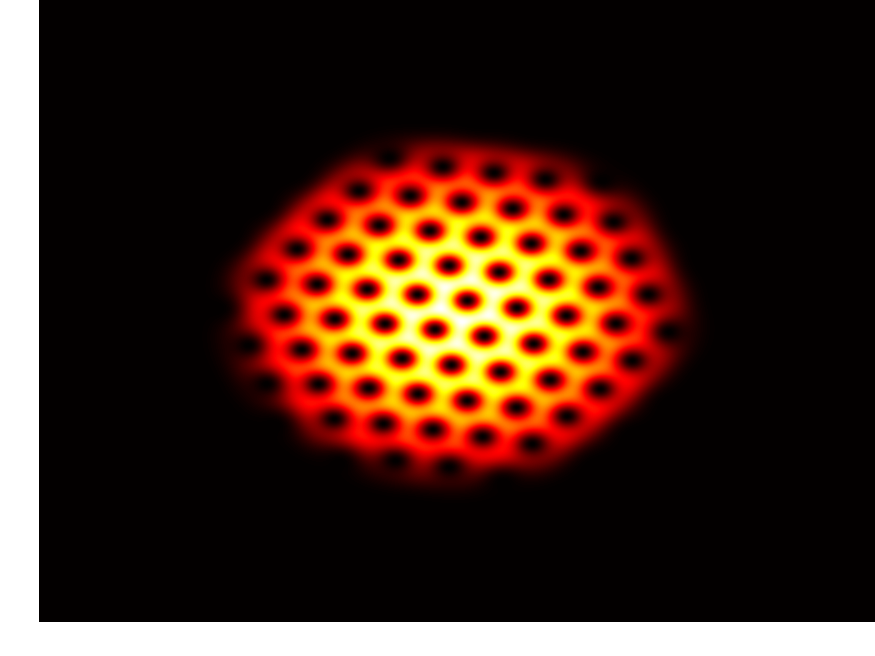}
\includegraphics[width=0.22\textwidth]{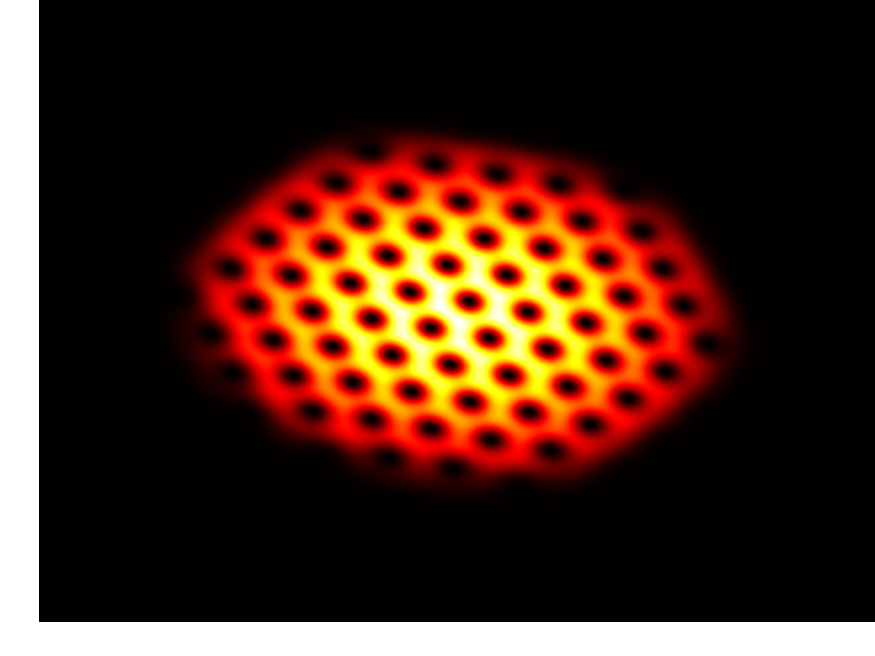}
\includegraphics[width=0.22\textwidth]{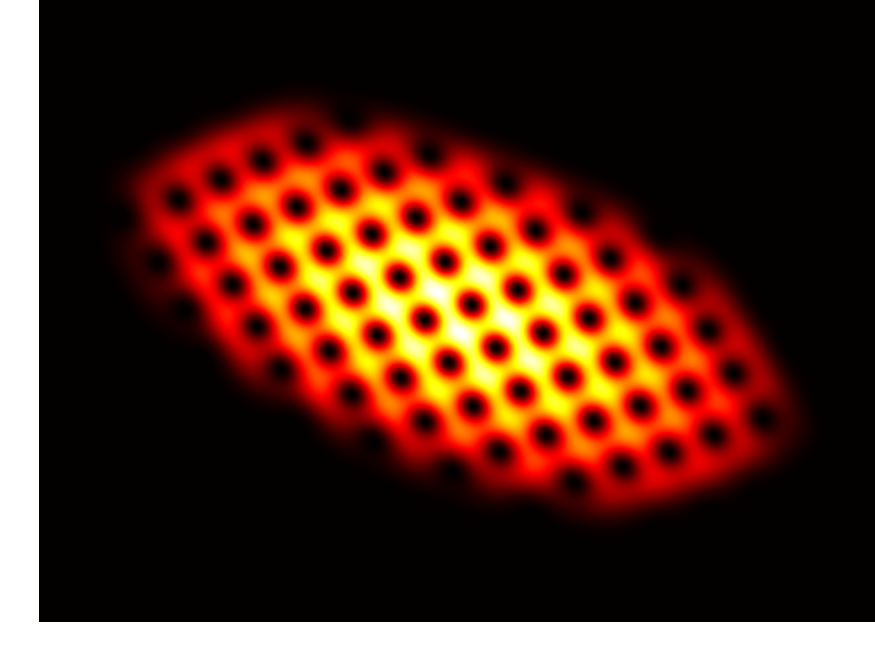}
\includegraphics[width=0.22\textwidth]{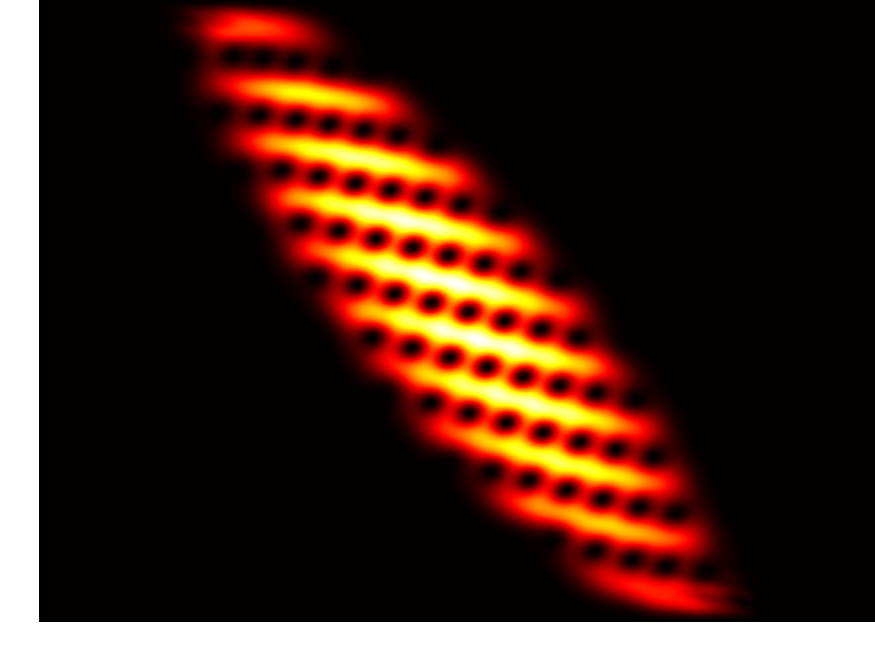}

\includegraphics[width=0.22\textwidth]{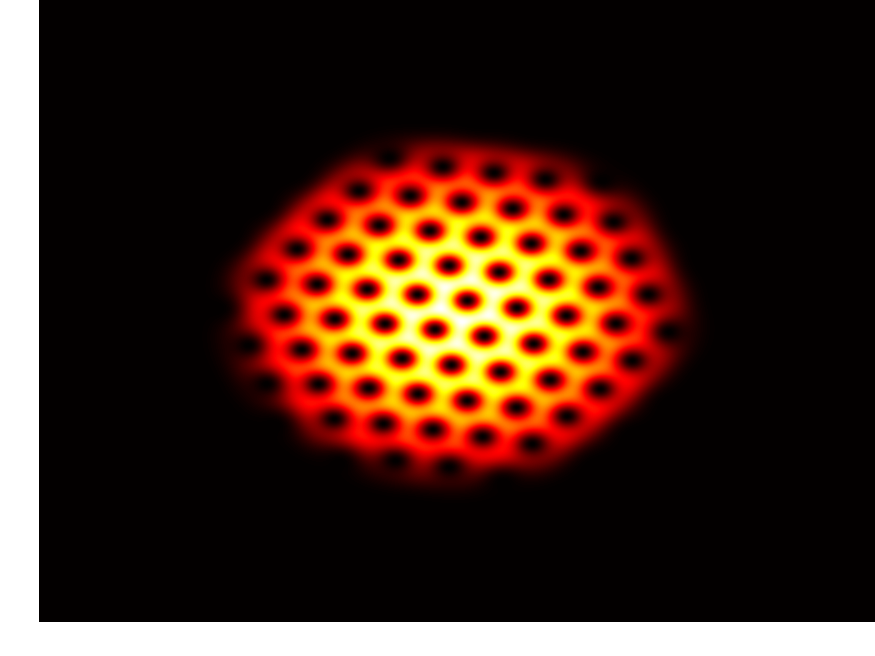}
\includegraphics[width=0.22\textwidth]{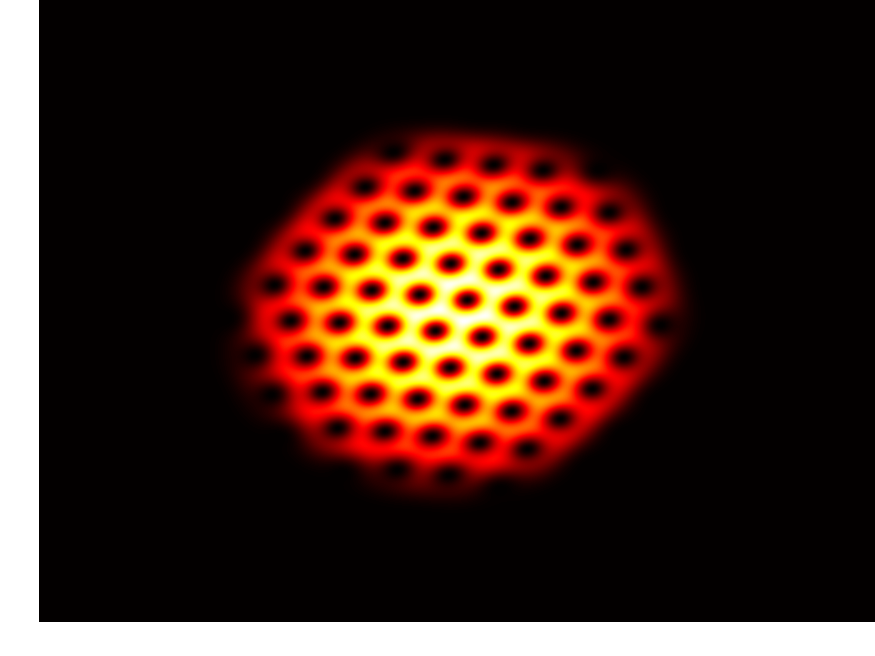}
\includegraphics[width=0.22\textwidth]{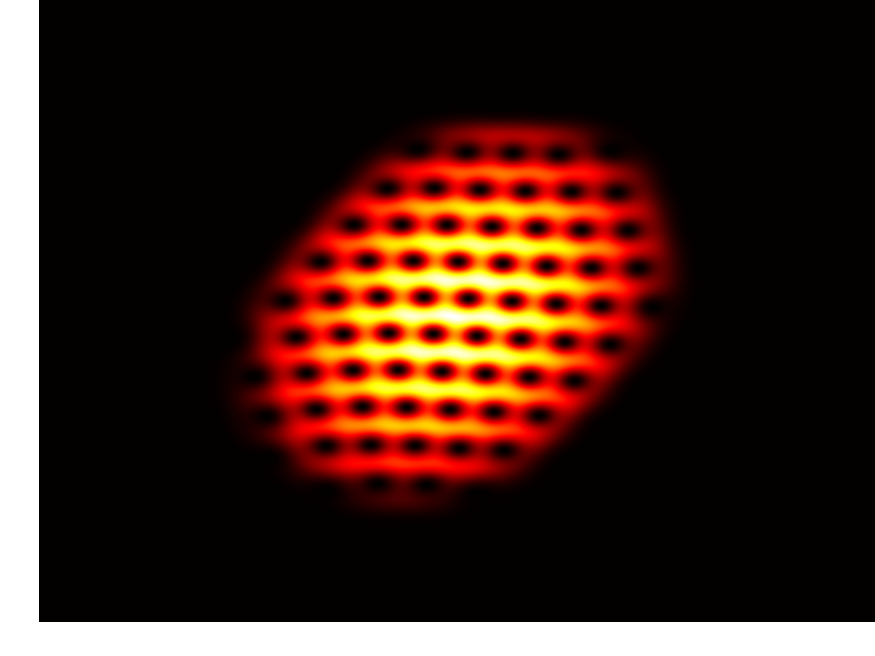}
\includegraphics[width=0.22\textwidth]{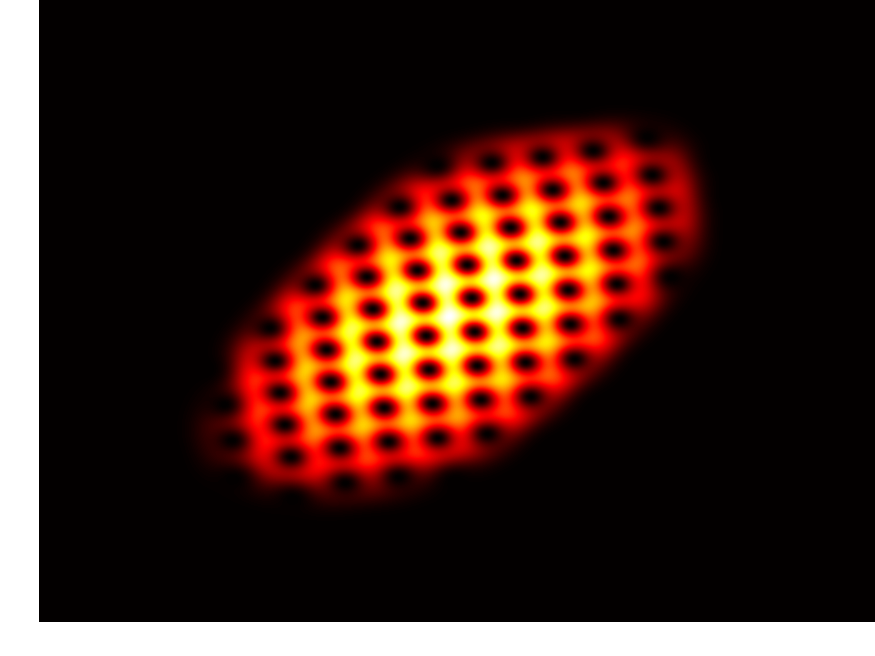}

\includegraphics[width=0.22\textwidth]{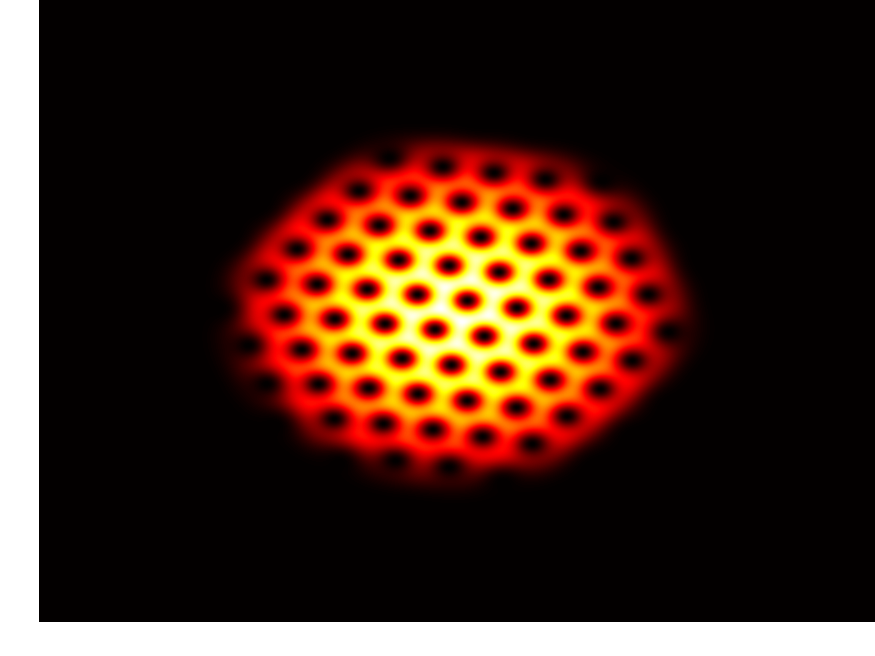}
\includegraphics[width=0.22\textwidth]{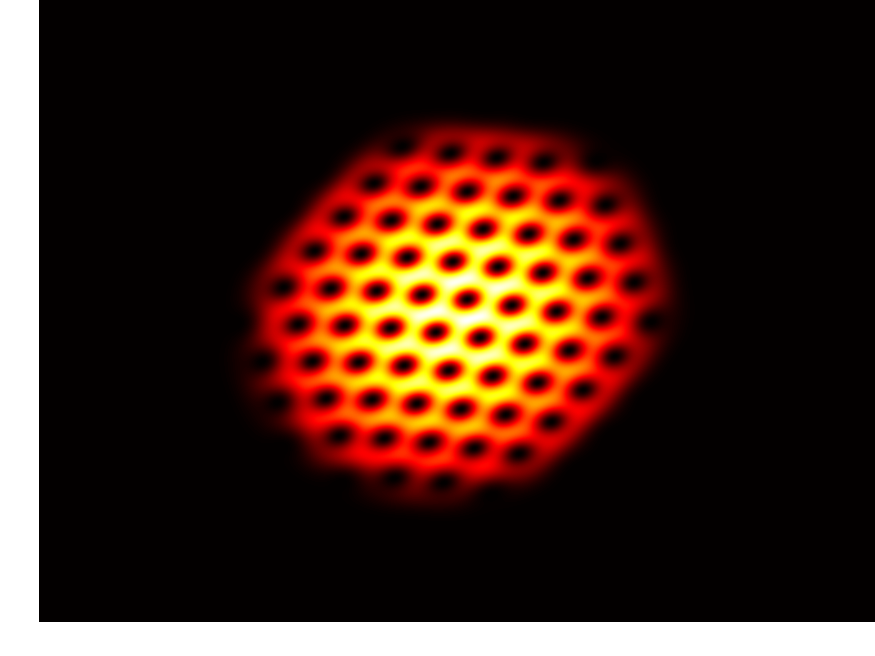}
\includegraphics[width=0.22\textwidth]{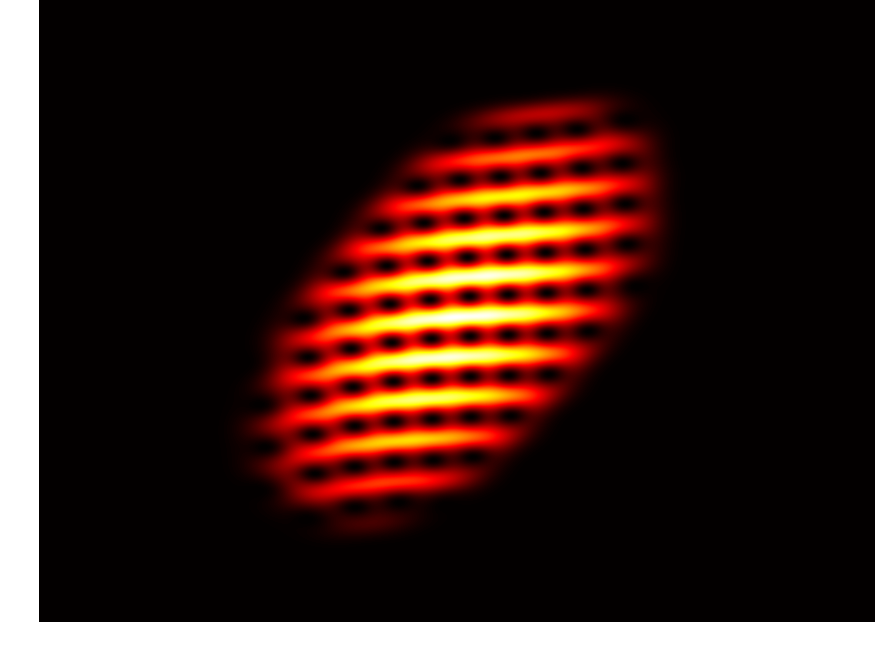}
\includegraphics[width=0.22\textwidth]{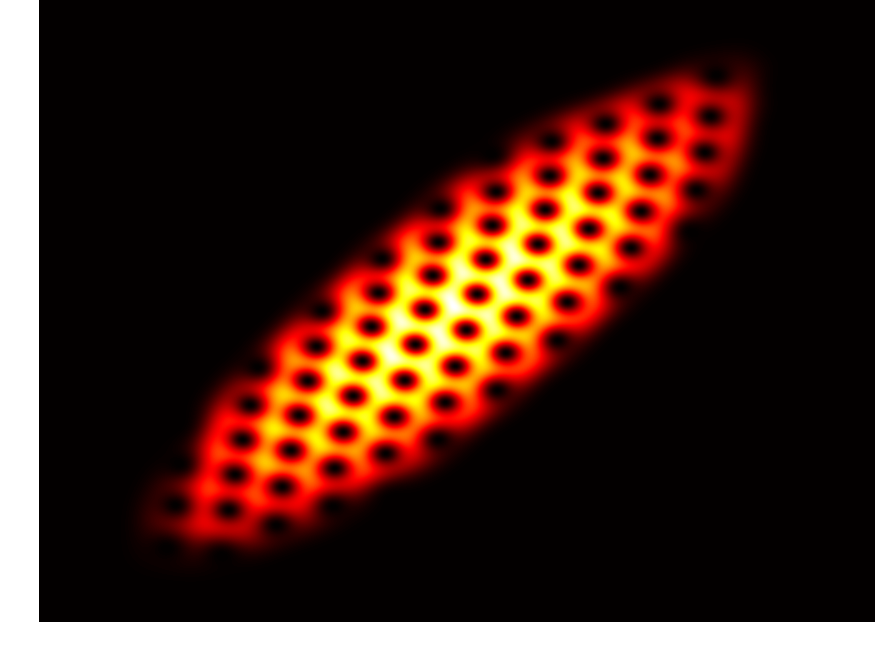}
\caption{Image plots of the density $|u|^2$ at the times $t=0, 0.75, 1.5$ and $3$. Here, we select $h=1/16$, $\tau=0.01$, and $\gamma_x=1, \gamma_y=1.5$ (first row); $\gamma_x=1, \gamma_y=0.5$ (second row); $\gamma_x=1.5, \gamma_y=1$ (third row); $\gamma_x=0.5, \gamma_y=1$ (fourth row); $\gamma_x=\sqrt{1.2}, \gamma_y=\sqrt{0.8}$ (fifth row); $\gamma_x=\sqrt{1.4}, \gamma_y=\sqrt{0.6}$ (sixth row).}
\label{figure:BEC3}
\end{figure}
Similar numerical tests were carried out in \cite{adhikari2002effect,bao2006efficient} by different methods, with much less number of vortices in the lattice.

\end{itemize}

\section{Conclusion} \label{sec6}

This paper focuses on developing and analyzing structure-preserving Galerkin methods for simulating the dynamics of rotating BEC based on the GPE with angular momentum rotation. The challenge lies in constructing FEMs that preserve both mass and energy, particularly in the context of nonconforming FEMs, due to the presence of the rotation term. Furthermore, we provide a comprehensive unconditional error analysis for the structure-preserving FEMs, demonstrating significant improvements compared to existing references, such as Bao and Cai \cite{bao2013optimal}, Henning and Peterseim \cite{henning2017crank}, Henning and M{\aa}lqvist \cite{henning2017finite}, and Henning and W{\"a}rneg{\aa}rd \cite{henning2022superconvergence}. 
To validate the theoretical analysis of the structure-preserving numerical method for rotating BEC, extensive numerical results are presented. The behavior of the quantized vortex lattice is thoroughly examined through a series of numerical tests.

\section*{References}
\bibliographystyle{elsarticle-num}
\bibliography{thebib}

\section*{Appendix: convergence analysis for the nonconforming FEM}
The fully discrete method with truncation for the nonconforming case is given as follows.
\begin{myDef}
	(Fully discrete method with truncation) Let $u_{h,\tau}^{T, 0}$ be a suitable interpolation of $u^0$. Then for $n\geq 1$, we define the following truncated fully discrete system, which is to find $u_{h,\tau}^{T, n+1}\in V_h^{NC}$, $0\leq n\leq N-1$ such that 
	\begin{align}\label{eqn:nonconformingfullydiscrete_truncation}
		i\left(D_\tau u_{h,\tau}^{T, n+\frac{1}{2}}, \omega_h\right)=&\frac{1}{2}\left(\nabla \hat u_{h,\tau}^{T, n+\frac{1}{2}}, \nabla\omega_h\right)_h+\left(V\hat u_{h,\tau}^{T, n+\frac{1}{2}}, \omega_h\right)-\Omega \left(L_z\hat u_{h,\tau}^{T, n+\frac{1}{2}}, \omega_h\right)_h
+\beta\left(\frac{\mu_A(|u_{h, \tau}^{T, n}|^2)+\mu_A(|u_{h, \tau}^{T, n+1}|^2)}{2}	\hat u_{h, \tau}^{T, n+\frac{1}{2}}, \omega_h\right)\nn\\
& +\left\langle S^{T, n+1}, \omega_h\right\rangle,
	\end{align}
for any $\omega_h\in V_h^{NC}$,
where 
\[
\left\langle S^{T, n+1}, \omega_h\right\rangle:=-i\frac{\Omega}{2}Re\left\langle (x-y)\hat u_{h,\tau}^{n+\frac{1}{2}}, \omega_h\right\rangle+\frac{\Omega}{2}Im\left\langle (x-y)u_{h,\tau}^{n+1},  \omega_h\right\rangle.
\]
\end{myDef}

We will use the interpolation operator to split the error. For convenience, we still adopt the same denotations:  
\begin{align*}
	e_{h,\tau}^{T, n}=u_\tau^n-u_{h, \tau}^{T, n}=\left(u_\tau^n-I_hu_\tau^n\right)+\left(I_hu_\tau^n-u_{h, \tau}^{T, n}\right)=:\rho_{h,\tau}^{T,n}+\theta_{h,\tau}^{T,n}.
\end{align*}
Subtracting \eqref{eqn:nonconformingfullydiscrete_truncation} from the variational  formulation of the time-discrete method \eqref{eqn:timediscrete} gives 
	\begin{align}\label{eqn:NC_spacetruncated_error_pf1}
		i\left(D_\tau e_{h,\tau}^{T, n+\frac{1}{2}}, \omega_h\right)=&\frac{1}{2}\left(\nabla \hat e_{h,\tau}^{T, n+\frac{1}{2}}, \nabla\omega_h\right)_h+\left(V\hat e_{h,\tau}^{T, n+\frac{1}{2}}, \omega_h\right)-\Omega \left(L_z\hat e_{h,\tau}^{T, n+\frac{1}{2}}, \omega_h\right)_h
+\left(\mathcal N_{h,\tau}^{T, n+\frac{1}{2}}, \omega_h\right)\nn\\
&-\left\langle S^{T, n+1}, \omega_h\right\rangle- \frac12\sum_K\int_{\partial K}\left(\nabla\hat u_\tau^{n+\frac12}\cdot \mathbf n\right)\omega_hds\qquad \forall \omega_h\in V_h^{NC}.
	\end{align}
Noting the property \eqref{eqn:Ih_error}, \eqref{eqn:NC_spacetruncated_error_pf1} is indeed equivalent to 
\begin{align}\label{eqn:NC_spacetruncated_error_pf2}
		i\left(D_\tau \theta_{h,\tau}^{T, n+\frac{1}{2}}, \omega_h\right)=&-i\left(D_\tau \rho_{h,\tau}^{T, n+\frac{1}{2}}, \omega_h\right)+\frac{1}{2}\left(\nabla \hat \theta_{h,\tau}^{T, n+\frac{1}{2}}, \nabla\omega_h\right)_h+\left(V\hat e_{h,\tau}^{T, n+\frac{1}{2}}, \omega_h\right)-\Omega \left(L_z\hat \rho_{h,\tau}^{T, n+\frac{1}{2}}, \omega_h\right)_h
+\left(\mathcal N_{h,\tau}^{T, n+\frac{1}{2}}, \omega_h\right)\nn\\
&
-\Omega \left(L_z\hat \theta_{h,\tau}^{T, n+\frac{1}{2}}, \omega_h\right)_h
-\left\langle S^{T, n+1}, \omega_h\right\rangle- \frac12\sum_K\int_{\partial K}\left(\nabla\hat u_\tau^{n+\frac12}\cdot \mathbf n\right)\omega_hds.
	\end{align}
Denoting $\omega_h=\hat \theta_{h,\tau}^{T, n+\frac{1}{2}}$ in \eqref{eqn:NC_spacetruncated_error_pf2}, and taking the imaginary part of the equation, we obtain 
\begin{align}\label{eqn:NC_spacetruncated_error_pf3}
	\frac{\|\theta_{h,\tau}^{T,n+1}\|_{L^2}^2-\|\theta_{h,\tau}^{T,n}\|_{L^2}^2}{2\tau}
= &-Re\left(D_\tau \rho_{h,\tau}^{T, n+\frac{1}{2}}, \hat \theta_{h,\tau}^{T, n+\frac{1}{2}}\right)	+Im\left(V\hat \rho_{h,\tau}^{T, n+\frac{1}{2}}, \hat \theta_{h,\tau}^{T, n+\frac{1}{2}}\right)-\Omega Im\left(L_z\hat \rho_{h,\tau}^{T, n+\frac{1}{2}}, \hat \theta_{h,\tau}^{T, n+\frac{1}{2}}\right)_h+Im\left(\mathcal N_{h,\tau}^{T, n+\frac{1}{2}}, \hat \theta_{h,\tau}^{T, n+\frac{1}{2}}\right)
\nn\\
		&
		-\Omega Im\left(L_z\hat \theta_{h,\tau}^{T, n+\frac{1}{2}}, \hat \theta_{h,\tau}^{T, n+\frac{1}{2}}\right)_h
		-Im\left\langle S^{T, n+1}, \hat \theta_{h,\tau}^{T, n+\frac{1}{2}}\right\rangle
		- 
		Im\left\{\frac12\sum_K\int_{\partial K}\left(\nabla\hat u_\tau^{n+\frac12}\cdot \mathbf n\right)\hat \theta_{h,\tau}^{T, n+\frac{1}{2}}ds\right\}.
	\end{align}
Compared with the conforming case, we only need to consider the last three terms of \eqref{eqn:NC_spacetruncated_error_pf3}. For the last term of \eqref{eqn:NC_spacetruncated_error_pf4}, we have 
\begin{align}\label{eqn:NC_spacetruncated_error_pf5}
	&- 	Im\left\{\frac12\sum_K\int_{\partial K}\left(\nabla\hat u_\tau^{n+\frac12}\cdot \mathbf n\right)\hat \theta_{h,\tau}^{T, n+\frac{1}{2}}ds\right\}
	=Im\left\{\frac12\sum_K\int_{\partial K}\left(\nabla\hat e_\tau^{n+\frac12}\cdot \mathbf n\right)\hat \theta_{h,\tau}^{T, n+\frac{1}{2}}ds\right\}
	-Im\left\{\frac12\sum_K\int_{\partial K}\left(\nabla\hat u_\tau^{n+\frac12}\cdot \mathbf n\right)\hat \theta_{h,\tau}^{T, n+\frac{1}{2}}ds\right\}\nn\\
	&\qquad \leq Ch\|\hat e_\tau^{n+\frac12}\|_{H^2}\|\hat \theta_{h,\tau}^{T, n+\frac{1}{2}}\|_{1,h}
	+Ch^2\|\hat u^{n+\frac12}\|_{H^3}
	\|\hat \theta_{h,\tau}^{T, n+\frac{1}{2}}\|_{1,h}
	\leq C\left(h\tau^2+h^2\right)\|\hat \theta_{h,\tau}^{T, n+\frac{1}{2}}\|_{1,h}.
\end{align}
By the definition of $\langle S^{T, n+1}, \cdot\rangle$, using integration by parts, by virtue of  
$\left\langle\cdot, \omega_h\right\rangle=O(h^2)\|\cdot\|_{H^2}\|\omega_h\|_{1,h}$, thanks to the boundedness of $\hat \rho_{h,\tau}^{T, n+\frac{1}{2}}$ in $H^2$-norm, 
and similar as \eqref{eqn:NC_spacetruncated_error_pf5}, we can get  
\begin{align}\label{eqn:NC_spacetruncated_error_pf4}
	&-\Omega Im\left(L_z\hat \theta_{h,\tau}^{T, n+\frac{1}{2}}, \hat \theta_{h,\tau}^{T, n+\frac{1}{2}}\right)_h
		-Im\left\langle S^{T, n+1}, \hat \theta_{h,\tau}^{T, n+\frac{1}{2}}\right\rangle
		\leq C\left(h\tau^2+h^2\right)\|\hat \theta_{h,\tau}^{T, n+\frac{1}{2}}\|_{1,h},
\end{align}

Hence, from \eqref{eqn:NC_spacetruncated_error_pf3} and similar as the conforming case, we can obtain  
\begin{align}\label{eqn:NC_spacetruncated_error_pf6}
	\|\theta_{h,\tau}^{T,n+1}\|_{L^2}^2
	\leq C_{E,h}\left(h^2\tau^4+h^4\right)+C\tau\sum_{k=1}^n\|\theta_{h,\tau}^{T,k}\|_{1,h}^2.
\end{align}

In addition, similar as the conforming case \eqref{eqn:super_pf4} and using above analytical approach, we can further obtain 
\begin{align}\label{eqn:NC_spacetruncated_error_pf7}
	\|\theta_{h,\tau}^{T,n+1}\|_{h}^2
	\leq C_{E,h}\left(h^2\tau^4+h^4\right)+C\tau\sum_{k=1}^n\left(\|\theta_{h,\tau}^{T,k}\|_{1,h}^2+\|D_\tau\theta_{h,\tau}^{T,k+\frac12}\|_{1,h}^2\right).
\end{align}
Moreover, like \eqref{eqn:super_pf5}, we can also take the difference between two consecutive steps, and then adopt similar steps as conforming case to finally obtain that 
\begin{align}\label{eqn:NC_spacetruncated_error_pf8}
	\|D_\tau\theta_{h,\tau}^{T,k+\frac12}\|_{h}^2
	\leq C_{E,h}\left(h^2\tau^4+h^4\right)+C\tau\sum_{k=1}^n\|\theta_{h,\tau}^{T,k}\|_{1,h}^2.
\end{align}
From \eqref{eqn:NC_spacetruncated_error_pf6}, \eqref{eqn:NC_spacetruncated_error_pf7} and \eqref{eqn:NC_spacetruncated_error_pf8}, we get 
\begin{align}
	\|\theta_{h,\tau}^{T,n+1}\|_{L^2}^2+\|\theta_{h,\tau}^{T,n+1}\|_{h}^2+\|D_\tau\theta_{h,\tau}^{T,n+\frac12}\|_{h}^2
	\leq C_{E,h}\left(h^2\tau^4+h^4\right)
	+C\tau\sum_{k=1}^n\left[\|\theta_{h,\tau}^{T,k}\|_{L^2}^2+\|\theta_{h,\tau}^{T,k}\|_{h}^2+\|D_\tau\theta_{h,\tau}^{T,k+\frac12}\|_{h}^2\right]. 
\end{align}
In conclusion, by applying the discrete Gronwall's inequality, we derive the desired convergence results that are identical to those in the conforming case. 
\end{document}